\theoremstyle{plain}
\newtheorem{theorem}{Theorem}[section]
\newtheorem{proposition}[theorem]{Proposition}
\newtheorem{lemma}[theorem]{Lemma}
\newtheorem{corollary}[theorem]{Corollary}
\newtheorem{bigthm}{Theorem}
\theoremstyle{definition}
\newtheorem{definition}[theorem]{Definition}
\newtheorem{example}[theorem]{Example}
\newtheorem{remark}[theorem]{Remark}
\newcommand{\inv}{^{-1}}
\newcommand{\Z}{\mathbb{Z}}
\newcommand{\R}{\mathbb{R}}
\newcommand{\F}[1]{\mathbb{F}_{#1}}
\renewcommand{\hom}{\mathop{\mathrm{Hom}}\nolimits}
\DeclareMathOperator{\im}{im}
\DeclareMathOperator{\Har}{Har}
\DeclareMathOperator{\rank}{rank}
\DeclareMathOperator{\nullity}{null}
\DeclareMathOperator{\coker}{coker}
\newcommand{\xto}[1]{\overset{#1}{\to}}
\newcommand{\bF}{\mathbb{F}}
\begin{document}

\author{Michael J. Catanzaro and Brantley Vose}

\title{Harmonic Representatives in Homology over Arbitrary Fields}
\begin{abstract}
  We introduce a notion of harmonic chain for chain complexes over fields of positive 
  characteristic. A list of conditions for when a Hodge decomposition theorem holds in
  this setting is given and we apply this theory to finite CW complexes. An explicit construction
  of the harmonic chain within a homology class is described when applicable. We show how the coefficients
  of usual discrete harmonic chains due to Eckmann can be reduced to localizations of 
  the integers, allowing us to compare classical harmonicity with the notion introduced here. 
  We focus on applications throughout, including CW decompositions of orientable surfaces and examples
  of spaces arising from sampled data sets.
\end{abstract}

\maketitle

\section{Introduction}\label{sec:intro}
The classical Hodge decomposition theorem~\cite{hodge_theory_1989,
warner_foundations_1983} states that the set of all differential $k$-forms on a
smooth, compact, Riemannian manifold $M$ decomposes as a direct sum
\begin{align}
  \label{eqn:Classical_Hodge}
  \Omega^k(M) &\cong \mathcal{H}^k(M) \oplus \mathcal{H}^k(M)^{\perp} \\
   &\cong \mathcal{H}^k(M) \oplus \im(d) \oplus \im(d^*) \, , \nonumber
\end{align}
where we identify $\mathcal{H}^k(M)$ with the space of harmonic $k$-forms, 
and $d$ is the exterior derivative, so that $\im(d)$
is the space of exact $k$-forms and $\im(d^*)$ is the
space of co-exact $k$-forms. We can further identify $\mathcal{H}^k(M)$ with
$\ker(d) \cap \ker(d^*)$, as well as with the kernel of the Laplacian
$\mathcal{L} = dd^* + d^*d$. The various guises of harmonic forms have 
created unifying bridges between disparate areas of mathematics. Applications
of harmonicity vary from studying heat flow~\cite{rosenberg_laplacian_1997,
warner_foundations_1983} to analyzing random walks~\cite{wilmer_markov_2009,
mukherjee_random_2016} and more generally spectral geometry~\cite{kac1966can, chavel1984eigenvalues}, to name a few.

In the 1940s, Eckmann generalized the classical story by replacing the manifold $M$ with
a simplicial complex $X$, the $k$-forms with the simplicial $k$-chains, and the Laplacian with an analogously defined discrete Laplacian \cite{Eckmann1944}.
In this discrete setting, Eckmann found that a theorem analogous to the Hodge decomposition, sometimes called the \textit{discrete Hodge decomposition}, still holds.
That is, the space of $k$-chains can be written as an direct sum
\begin{equation}\label{eq:Eckmann_Hodge}
  C_k(X;\mathbb C) \cong 
  \ker(\mathcal L_k) \oplus \im(\partial) \oplus \im(\partial^*)  \, ,
\end{equation}
where $\mathcal L_k$ here is the discrete Laplacian $\mathcal L_k = \partial\partial^* + \partial^*\partial$.
Besides its theoretical significance, this result furnishes many practical
applications of harmonicity including voting and ranking
theory~\cite{diaconis1989generalization, vigna_spectral_2016, jiang_statistical_2011}, network theory~\cite{adiprasito2018hodge,baker2018hodge}
and spectral graph theory~\cite{Lim2015}.  

One particular use of the 
Hodge decomposition is the following.  
Every chain in $\ker(\mathcal L_k)$ is a cycle, and the natural map
$\ker(\mathcal L_k) \to H_k(X;\mathbb C)$ sending each cycle to its homology
class is an isomorphism.
Inverting this isomorphism lets us select a canonical representative of each homology class.
In other words, the 
Hodge decomposition implies that every homology class has a unique harmonic representative.
Among other implications, this result lets allows us to perform homology computations in a subspace instead of a quotient space, the latter being tricky to implement in practice.


The story we have laid out so far holds just as well if we replace $\mathbb C$ with any field of characteristic 0. 
However, we are often interested in homology and cohomology over more general coefficient rings, including fields of positive characteristic.
This is especially true in applications; computations over finite fields, especially $\mathbb{F}_2$, can be performed more quickly than over $\mathbb{Q}$, and with perfect precision.
One example is the software library Ripser, which is a popular choice for computing persistent (co)homology \cite{Bauer2021}.
Once we learn of the practical uses of the Hodge decomposition and harmonic representatives, it is natural to ask whether similar results hold over other fields.
This leads us to the driving questions of this paper.
\begin{enumerate}
  \item[\textbf{(Q1)}] Does an analog of the discrete Hodge decomposition exist when we replace $\mathbb{C}$ with a finite field, or more generally with an arbitrary field $\mathbb{F}$?
  \item[\textbf{(Q2)}] If not, when does every homology class have a unique harmonic representative?
\end{enumerate}

The first of our primary results, Theorem~\ref{thm:TFAE-list}, provides a list of necessary and sufficient conditions for every homology class in $H_k(X;\F{})$ to have a unique harmonic representative, addressing (Q2).
Subsequently, Theorem~\ref{thm:hodge-decomposition} gives stronger necessary and sufficient conditions for the existence of a Hodge decomposition analogous to Eq.~\eqref{eqn:Classical_Hodge}, namely, if and only if every homology class \textit{and} every cohomology class have a unique harmonic representative. This answers question (Q1).
Sections~\ref{sec:existence-and-uniqueness} and \ref{sec:computation} are spent proving these results and developing methods for computing these unique harmonic representatives.
While our motivating examples arise from CW complexes with finitely many cells in each dimension, in these sections we work in the more general setting of chain complexes of finite-dimensional spaces equipped with standard bases.
For this reason, our results can be applies just as well to the dual chain complex to derive results about cohomology that are ``dual'' to our results about homology.


We then move on to consider (Q1) and (Q2) from a different perspective.
Given a fixed CW complex $X$ with finitely many cells in each dimension, in Section~\ref{sec:rational-methods} we explore for which primes $p$ the cellular homology of $X$ over $\F p$ satisfies the statements of Theorem~\ref{thm:TFAE-list}.
That is, given $X$, we provide sufficient conditions for $p$ under which every cellular homology class of $X$ over $\F p$ has a unique harmonic representative.
We accomplish this by leveraging the cleaner rational case and descending to the finite field.
This method is thwarted if $H_k(X)$ has $p$-torsion, or if $p$ is one of finitely many primes determined by the combinatorial structure of $X$.
Theorem~\ref{thm:fixed-X-variable-p} determines this list of problematic primes in terms of an integer that we call $\Upsilon$ which depends on the combinatorial structure of $X$.

In the special case that $X$ is a CW structure on an orientable surface, the statements of Section~\ref{sec:rational-methods} can be simplified. In Section~\ref{sec:orientable-surfaces} we show that, in this special case, Theorem~\ref{thm:harmonic-coefficient-subring} has a simpler statement in terms of the discrete Laplacian, giving us a tidy condition in Theorem~\ref{cor:orientable-surface-tidy-existence} to ensure the existence and uniqueness of all harmonic representatives on $X$ over $\F p$.

Ultimately the difference between harmonicity in positive and zero characteristic stems
from the fact that vector spaces over fields of positive characteristic cannot be equipped with norms 
or inner products, since fields of positive characteristic cannot be ordered.
The best replacement is a nondegenerate bilinear form, for which there
can exist degenerate subspaces. Degenerate subspaces have nontrivial intersection
with their `orthogonal complement', so a direct sum decomposition as in
Eq.~\eqref{eqn:Classical_Hodge} is generally not possible.
Over a field of characteristic 0, $\ker(\mathcal L_k) = \ker(\partial) \cap
\ker(\partial^*)$. However, this fact depends on the assumption that every
subspace is nondegenerate. Over a field of positive characteristic, we instead
have $\ker(\mathcal L_k) \supseteq \ker(\partial) \cap \ker(\partial^*)$, often
with strict containment. Indeed, over a general field, an element of
$\ker(\mathcal L_k)$ need not even be a cycle (see
Example~\ref{ex:pinched-cylinder-subdivided}). Since this paper is interested
in harmonic representatives of (co)homology classes, we make the choice to
define the harmonic chains to be the space $\ker(\partial) \cap
\ker(\partial^*)$, i.e. the space of chains that are both cycles and cocycles.
We emphasize that, over characteristic 0, this is equivalent to the usual
definition that instead chooses $\ker(\mathcal L_k)$, but in general the two
spaces are distinct.

\subsection{Related Work.}

In~\cite{Ebli2019}, Ebli and Spreemann utilize the discrete laplacian over $\R$
to develop a homology-sensitive method to cluster the simplices of a
simplicial complex. Among other uses, this process produces visualizations of
homology classes similar to those in Section~\ref{sec:experiments}.
A generalization of the discrete Laplacian designed for the setting of persistent homology is introduced and explored in \cite{Wang2020} and \cite{memoli2021}. In the
recent preprint~\cite{basu2021}, Basu and Cox apply the theory of harmonic
representatives to persistent homology to associate concrete
subspaces of chain spaces to each bar of the persistence barcode in a stable
way. Kali\v{s}nik et. al.~\cite{kalivsnik2019higher} explore related methods for
creating a homologically persistent skeleton using higher spanning trees,
and encode important topological features of data using these subcomplexes.

\section{Background and Main Results}\label{sec:background-and-results}
In this section, we briefly recall some facts regarding linear algebra over
arbitrary fields, homology, cohomology, and CW complexes that will be used throughout.
With this background in place, we then state our main results. 

\subsection*{Linear algebra over arbitrary fields}
We remind the reader of some basic facts about vector spaces over fields of
positive characteristic. A standard reference for this material is given
in~\cite{taylor1992geometry}.

A \emph{symmetric bilinear form} on a vector space $V$ over a field $\bF$ is a map
$b\colon V \times V \to \bF$ such that
\begin{gather*}
  b(v,w) = b(w,v) \, , \\
  b(u+v,w) = b(u,w) + b(v,w)\, , \mbox{ and} \\
  b(\lambda v,w) = \lambda b(v,w) \, ,
\end{gather*}
for any $u, v, w \in V$ and $\lambda \in \bF$. Given a symmetric bilinear form
$b$ on $V$ and a subset $U \subseteq V$, we define the \emph{orthogonal 
complement} of $U$ by
\begin{equation*}
  U^{\perp} = \{ v \in V \, | \, b(u,v) = 0 \mbox{ for all } u \in U \} \,.
\end{equation*}
The symmetric bilinear form $b$ is said to be \emph{non-degenerate} if
$V^{\perp} = 0$. If $U$ is a subspace of $V$, then $b|_{U} \colon U \times U \to F$ is again a 
symmetric bilinear form. A subspace $U$ is \emph{non-degenerate} if 
$b|_{U \times U}$ is non-degenerate, i.e., $U \cap U^{\perp} = 0$.

\begin{example}\label{ex:standard-inner-product}
  Let $V$ be an $n$-dimensional vector space over the real numbers $\bF = \R$ with 
  basis $\{e_1, e_2, \ldots, e_n\}$. Then 
  $b(e_i,e_j) = \delta_{ij}$, given by the Kronecker delta function,
  defines a non-degenerate symmetric bilinear form, the standard 
  inner product. Every subspace of $V$ is non-degenerate.

  Let $W$ be an $m$-dimensional vector space over the field with $p$
  elements $\bF = \F{p}$ with basis $\{f_1, f_2, \ldots, f_m\}$. Then 
  $b(f_i,f_j) = \delta_{ij}$ defines a non-degenerate symmetric bilinear
  form. In this case, $b$ does not define an inner product and there are
  numerous degenerate subspaces of $W$, e.g., the one-dimensional subspace
  spanned by $f_1 + \cdots + f_p$ for $ p \leq m$.
\end{example}

  


\subsection{Homology and Cohomology}
Let $R$ be a ring.
A \emph{chain complex} $C_{\bullet}$ over $R$ is a sequence of $R$-modules $\{C_k\}_{k\in \mathbb Z}$ called the \emph{chain modules}, along with homomorphisms $\partial_k:C_k \to C_{k-1}$, called the \emph{differentials}, satisfying $\partial_{k-1}\partial_k = 0$.
The $k$-cycles $Z_k \subseteq C_k$ are given by the kernels of the
differentials
$
  Z_k = \ker \partial_k \, .
$
The $k$-boundaries $B_k \subseteq C_k$ are given by the images of the
differentials
$
  B_k = \im \partial_{k+1} \, .
$
Finally, since the condition $\partial_{k-1}\partial_{k} = 0$ implies $B_k \subseteq Z_k$, we can define the
homology of the chain complex in degree $k$ to be the quotient
$
  H_k = Z_k/B_k \, .
$
We use the notation $[z] \in H_k$ to denote the equivalence class of $z \in Z_k$, and say that $z$ is a \emph{representative} of $[z]$.

The associated {\em cochain complex} is the $R$-linear dual of the
chain complex. Precisely it is the sequence of $R$-modules $C^k := \hom_R(C_k,R)$,
the $R$-linear dual of $C_k$, together with codifferentials given by the duals
of the original differentials $\partial_k^* : C^{k-1} \to C^k$. The condition
$\partial_{k-1} \partial_{k} = 0$ implies $\partial^*_{k}\partial^*_{k-1} = 0$.
Analogous to cycles and boundaries, the $k$-cocycles $Z^k \subseteq C^k$ are the cochains in the kernel of the codifferential
$Z^k = \ker(\partial^*_{k+1}),$
the $k$-coboundaries $B^k \subseteq C^k$ are those in the images
$B^k = \im(\partial^*_k).$
and the cohomology of the cochain complex is given by the quotient
$H^k = Z^k/B^k.$

\begin{remark}\label{rem:identification}
  We are interested in the case where the underlying ring of the chain complex is a field $\bF$, and each chain module $C_k$ is a finite-dimensional vector space with a chosen basis.
  This is the case when $C_{\bullet}$ arises as a simplicial or cellular chain complex as discussed below in section \ref{subsec:cellular homology}
  The chosen basis, say $\{ \alpha_i\}$ on $C_k$, defines a symmetric bilinear form $b_k$ on $C_k$ by setting $b_k(\alpha_i, \alpha_j) = \delta_{ij}$, the Kronecker pairing.
  The form $b_k$ gives rise to a canonical isomorphism between $C_k$ and its dual $C^k$, letting us identify the two and blur the distinction between a chain $c$ and its dual $c^*$.
  This identification is common in the literature, allowing one to
  think of the codifferentials $\partial^*_k$ as linear maps $C_{k-1} \to C_k$, and think of $Z^k$ and $B^k$ as subspaces of $C_k$.
\end{remark}

Given the identification in Remark~\ref{rem:identification}, we state our notion of $\mathbb{F}$-harmonicity.
\begin{definition}
  \label{defn:harmonic}
  A chain $c\in C_k$ is called \emph{$\F{}$-harmonic} if $\partial_k c = \partial_{k+1}^* c = 0$. Equivalently, a chain is $\F{}$-harmonic if it lies in $Z_k \cap Z^k$. When the underlying field is clear, we may simply say $c$ is harmonic.
\end{definition}

Oftentimes, authors working over fields of characteristic 0 will define a chain
to be harmonic if it falls within the kernel of the Laplacian $\mathcal L_k :=
\partial_{k+1}\partial_{k+1}^* + \partial_k^* \partial_k$.  Over such a field, this definition
is equivalent to Definition~\ref{defn:harmonic}, though the two may diverge over a field of positive
characteristic. In general, $\ker \mathcal{L}_k \supseteq Z_k \cap Z^k$ (see Example~\ref{ex:pinched-cylinder-subdivided}).

\subsection*{CW Complexes}
Let $X$ denote a CW complex. We denote the $k$-skeleton
of $X$ by $X^{(k)}$ and the set of $k$-cells is denoted $X_k$. Recall that a CW
complex is given by iteratively attaching $k$-cells
$e_{\alpha}^k$ as $\alpha$ varies, where each $e_{\alpha}^k$ is homeomorphic to
an open Euclidean $k$-disk.  Inducting on the degree $k$, the $k$-skeleton is
formed from the $(k-1)$-skeleton via the attaching maps for each $k$-cell. The
attaching map of a cell $e^k_{\alpha}$ is of the form \[ \varphi_{\alpha}
  \colon S_{\alpha}^{k-1} \to X^{(k-1)} \, , \] where $\delta e^k_{\alpha} =
  S^{k-1}_\alpha$ denotes the boundary of $e^k_{\alpha}$.
Additionally, we call a CW complex \textit{degree-wise finite} if it has finitely many cells of each dimension.

All of the topological spaces we are interested in, e.g., simplicial 
and cubical complexes, admit the structure of a CW complex, including
topological graphs.

\subsection*{Cellular Homology}\label{subsec:cellular homology}
Given a CW complex $X$ and a ring $R$, there is a chain complex $C_\bullet(X;R)$ called the \emph{cellular chain complex}. 
Each chain module $C_k(X;R)$ is a free $R$-module with generating set given by the set of $k$-cells of $X$,
so a typical $k$-chain is given by an $R$-linear combination of $k$-cells.
The differentials are determined by the attaching
maps of the cell complex via
\[
  \partial_{k+1}(\alpha) = \sum_{a \in X_k} \langle \delta \alpha, a \rangle a \, ,
\]
for each $(k+1)$-cell $\alpha$, where $\langle - , - \rangle$ denotes
incidence number.

In the case that $R$ is a field and $X$ is degree-wise finite, each cellular chain space $C_k(X;R)$ is finite dimensional with a canonical basis given by the $k$-dimensional cells of $X$.
This case is our motivating example.

\subsection*{Main Results}

We state the first of our main theorems in the general setting of chain complexes of finite
dimensional vector spaces over an arbitrary field with chosen bases, though our motivating
example is that of a cellular chain complex $C_\bullet(X;\bF)$ of a degree-wise finite CW complex $X$.
\begin{bigthm}\label{thm:TFAE-list}
    Let $C_\bullet$ be a chain complex of finite dimensional vector spaces
    over a field $\bF$, where each $C_k$ is equipped with a chosen basis and the induced symmetric bilinear form. The following are equivalent.
    \begin{enumerate}
        \item \label{it:A-ExistenceUniqueness}
        (Existence and uniqueness of harmonic representatives) Every homology class in $H_k$ has a unique harmonic representative.
        \item \label{it:A-Uniqueness}
        (Uniqueness of harmonic representatives) For every homology class in $H_k$ with a harmonic representative, that representative is unique.
        \item \label{it:A-UniquenessAt0}
        The only harmonic representative of the trivial homology class $[0]\in H_k$ is 0.
        \item \label{it:A-HomologyIsomorphism}
        $Z_k \cap Z^k \cong H_k$ via the quotient map $z\mapsto [z]$.
        \item \label{it:A-TrivialIntersection}
        $B_k \cap Z^k = 0$.
        \item \label{it:A-OrthogonalSum}
        $B_k \oplus Z^k = C_k$.
        \item \label{it:A-KernelOfComposition}
		$\ker(\partial_{k}^* \partial_{k}) = \ker(\partial_{k})$.
        \item \label{it:A-ImageOfComposition}
		$\im(\partial_{k+1}^* \partial_{k+1}) = \im(\partial_{k+1}^*)$.
        \item \label{it:A-PseudoinverseExistence}
        The Moore-Penrose pseudoinverse $\pi^\dagger$ of the projection map $\pi:C_k \to C_k/B_k$ exists.
    \end{enumerate}
    Furthermore, if these statements hold, then the pseudoinverse $\pi^\dagger$ of (\ref{it:A-PseudoinverseExistence}) takes every homology class to its unique harmonic representative.
\end{bigthm}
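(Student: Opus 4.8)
The plan is to prove that each of (\ref{it:A-ExistenceUniqueness})--(\ref{it:A-PseudoinverseExistence}) is a reformulation of the single statement that $B_k$ is a \emph{non-degenerate} subspace of $C_k$, and then to read the description of $\pi^\dagger$ off of this. Everything hinges on one identity: since $\partial_{k+1}^*$ is the adjoint of $\partial_{k+1}$ for the non-degenerate forms $b_{k+1}, b_k$, and the kernel of an adjoint is the orthogonal complement of the image of the original map,
\[
  Z^k \;=\; \ker \partial_{k+1}^* \;=\; (\im \partial_{k+1})^\perp \;=\; B_k^\perp .
\]
As $b_k$ is non-degenerate, $\dim B_k + \dim B_k^\perp = \dim C_k$; combined with $Z^k = B_k^\perp$ this gives $B_k \cap Z^k = 0 \iff B_k \oplus Z^k = C_k$, i.e. (\ref{it:A-TrivialIntersection}) $\Leftrightarrow$ (\ref{it:A-OrthogonalSum}), each of which says precisely that $B_k$ is non-degenerate.

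I would then study the map $q\colon Z_k \cap Z^k \to H_k$, $z \mapsto [z]$. Since $B_k \subseteq Z_k$, its kernel is $(Z_k \cap Z^k) \cap B_k = B_k \cap Z^k$, so $q$ is injective exactly when (\ref{it:A-TrivialIntersection}) holds; unwinding definitions, (\ref{it:A-Uniqueness}) and (\ref{it:A-UniquenessAt0}) each also say exactly $\ker q = 0$, because a harmonic representative of $[0]$ is by definition an element of $(Z_k \cap Z^k) \cap B_k$. When $B_k$ is non-degenerate, $C_k = B_k \oplus Z^k$, so any cycle $z$ decomposes as $z = b + h$ with $b \in B_k$ and $h \in Z^k$; then $h = z - b \in Z_k \cap Z^k$ and $[h] = [z]$, so $q$ is onto --- this is the one place the splitting is genuinely used. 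Hence $q$ is an isomorphism exactly when $B_k$ is non-degenerate, which is what (\ref{it:A-ExistenceUniqueness}) and (\ref{it:A-HomologyIsomorphism}) assert, closing the equivalence of (\ref{it:A-ExistenceUniqueness})--(\ref{it:A-OrthogonalSum}).

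For (\ref{it:A-KernelOfComposition}) and (\ref{it:A-ImageOfComposition}) I would isolate the elementary lemma that, for a linear map $A\colon V \to W$ of finite-dimensional spaces with non-degenerate forms, $\ker(A^*A) = A^{-1}\big(\im A \cap (\im A)^\perp\big)$, so $\ker(A^*A) = \ker A$ if and only if $\im A$ is non-degenerate, and, by passing to orthogonal complements, $\im(A^*A) = \im(A^*)$ is equivalent to the same condition. Applying this to the differential $\partial_{k+1}$, whose image is $B_k$, yields (\ref{it:A-KernelOfComposition}) $\Leftrightarrow$ (\ref{it:A-ImageOfComposition}) $\Leftrightarrow$ (\ref{it:A-TrivialIntersection}).

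The delicate part, which I would do last, is (\ref{it:A-PseudoinverseExistence}) and the closing ``furthermore.'' Because $\pi$ is surjective, the Penrose identity $\pi \pi^\dagger \pi = \pi$ forces $\pi \pi^\dagger = \mathrm{id}_{C_k/B_k}$, which is self-adjoint for any form; so the existence of $\pi^\dagger$ is controlled by the remaining Penrose identities and is insensitive to the choice of form on $C_k/B_k$. If $\pi^\dagger$ exists, then $P := \pi^\dagger \pi$ is a self-adjoint idempotent of $C_k$ with $\ker P = \ker \pi = B_k$; since a self-adjoint idempotent has image the orthogonal complement of its kernel (for the non-degenerate form $b_k$), $\im P = B_k^\perp = Z^k$ and $C_k = B_k \oplus Z^k$, giving (\ref{it:A-OrthogonalSum}). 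Conversely, from the splitting one builds $\pi^\dagger$ as the inverse of the isomorphism $\pi|_{Z^k}\colon Z^k \xrightarrow{\sim} C_k/B_k$ followed by $Z^k \hookrightarrow C_k$, and verifies the Penrose identities; the only nontrivial check is that $\pi^\dagger \pi$ --- the projection of $C_k$ onto $Z^k$ along $B_k$ --- is self-adjoint, which holds precisely because $Z^k = B_k^\perp = (\ker \pi)^\perp$. Uniqueness of $\pi^\dagger$ is the standard algebraic argument from the Penrose identities, valid over any field since it uses only non-degeneracy of $b_k$. Finally, for a cycle $z$, the chain $h := \pi^\dagger([z])$ lies in $Z^k$ with $\pi(h) = [z]$, so $h - z \in B_k \subseteq Z_k$; thus $h \in Z_k \cap Z^k$ is harmonic and $[h] = [z]$, and by the uniqueness in (\ref{it:A-Uniqueness}) it is \emph{the} harmonic representative of $[z]$. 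I expect this last block to be the main obstacle: deciding which Penrose identities carry content for the surjection $\pi$, and confirming that the ``self-adjoint idempotent $=$ orthogonal projection'' dictionary survives the loss of an inner product in positive characteristic.
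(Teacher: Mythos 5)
Your proof is correct, and the mathematical core is the same as the paper's: everything reduces to the single fact $Z^k = B_k^\perp$, items (1)--(8) are reformulations of non-degeneracy of $B_k$, and (9) is recast as the splitting $C_k = B_k \oplus Z^k$. Your study of the map $q\colon Z_k\cap Z^k \to H_k$, with $\ker q = B_k \cap Z^k$, compresses the paper's chain $(1)\Rightarrow(2)\Rightarrow(3)\Rightarrow(5)$ together with Proposition~\ref{prop:uniqueness-measurement} into a single kernel computation, and your surjectivity argument for $q$ from the splitting is the same content as Theorem~\ref{thm:pseudoinverse-usefulness}. Your lemma that $\ker(A^*A)=A^{-1}(\im A\cap(\im A)^\perp)$ is a specialization of the paper's fact that $\ker(fg)=\ker g$ iff $\ker f\cap\im g=0$, applied with $f=\partial_{k+1}^*$, $g=\partial_{k+1}$. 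Where you genuinely diverge is item (9): the paper reaches it through Pearl's rank criterion for pseudoinverse existence (Remark~\ref{rmk:full-rank-pseudoinverse}, Proposition~\ref{prop:pseudoinverse-of-surjective-map-existence-and-form}, Lemma~\ref{lemma:pseudoinverse-existence-condition}), whereas you prove $(\ref{it:A-PseudoinverseExistence})\Leftrightarrow(\ref{it:A-OrthogonalSum})$ directly from the Penrose axioms --- if $\pi^\dagger$ exists then $\pi^\dagger\pi$ is a self-adjoint idempotent with kernel $B_k$, which forces the orthogonal splitting, and conversely the splitting manufactures $\pi^\dagger$ as $\iota\circ(\pi|_{Z^k})^{-1}$ with the axioms checked by hand. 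This is more self-contained (it never invokes Pearl's theorem) and your $\pi^\dagger$ coincides with the paper's $\pi^*(\pi\pi^*)^{-1}$. One remark worth flagging: as printed, item (\ref{it:A-KernelOfComposition}) references $\partial_k$ while item (\ref{it:A-ImageOfComposition}) references $\partial_{k+1}$, but for non-degeneracy of $B_k$ both must involve $\partial_{k+1}$; you implicitly work with $\partial_{k+1}$ for both, which matches what the paper's own proof actually does, so you have corrected an apparent indexing slip in the statement exactly as the paper's proof does.
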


Theorem~\ref{thm:TFAE-list} provides several possible answers to (Q2).

We highlight some notable aspects of Theorem~\ref{thm:TFAE-list}. Statements
(\ref{it:A-ExistenceUniqueness}) through (\ref{it:A-UniquenessAt0}) regard
existence and uniqueness of harmonic representatives.
While uniqueness of harmonic representatives does imply existence, the converse does
not hold, as Example~\ref{ex:pinched-cylinder-subdivided} will demonstrate.
Statements (\ref{it:A-TrivialIntersection}) and (\ref{it:A-OrthogonalSum})
relate to the linear algebraic properties of the chain complex. Note
that $B_k^\perp = \ker(\partial_k) = \im(\partial_k^*) = Z^k$, so $B_k$ and
$Z^k$ are orthogonal complements. If the underlying field has characteristic 0,
these spaces are indeed linear complements and statement
(\ref{it:A-TrivialIntersection}) always holds. Hence the statements of
Theorem~A hold over any characteristic 0 field. 
Finally, statement (\ref{it:A-PseudoinverseExistence})
provides an explicit method for computing the unique harmonic representative of
a homology class.
This is our primary method of computing representatives.
These computations, as well as pseudoinverses over arbitrary fields, are discussed in Section~\ref{sec:computation}.

One advantage of stating Theorem~\ref{thm:TFAE-list} at the level of chain complexes
is that it can be applied just as well to a cochain complex, such as the dual complex $C^\bullet$, so long as we take care with the indexing.
Recall that a cochain complex is roughly the same as a chain complex, except that the indexing increases as we apply the boundary map instead of decreasing.
Since the indexing is completely incidental to the above statements, we can apply Theorem~\ref{thm:TFAE-list} to $C^\bullet$ and get a ``dual'' theorem.
For example, when
Theorem~\ref{thm:TFAE-list} is applied to a cochain complex, statement~(\ref{it:A-TrivialIntersection}) becomes ``$B^k \cap Z_k = 0$,'' and statements~(\ref{it:A-ExistenceUniqueness}) through (\ref{it:A-HomologyIsomorphism}) become statements about harmonic representatives of \emph{co}homology classes.
When the statements of Theorem~\ref{thm:TFAE-list} apply
to both a chain complex $C_\bullet$ and its dual complex $C^\bullet$, these
statements will give us that the harmonic forms are exactly $\ker \mathcal L$,
as in the classical case.
\begin{definition} If the statements of Theorem~\ref{thm:TFAE-list} hold for a cochain complex $C_\bullet$ at degree $k$, we say that $C_\bullet$ is \emph{homologically harmonic} in degree $k$. If the statements of Theorem~\ref{thm:TFAE-list} hold for a cochain complex $C^\bullet$ at degree $k$, we say that $C^\bullet$ is \emph{cohomologically harmonic}.
We say a degree-wise finite CW complex $X$ is \textit{homologically harmonic in degree $k$ over $\bF$} when the cellular chain complex $C_\bullet(X;\bF)$ is homologically harmonic, and we say $X$ is \emph{cohomologically harmonic in degree $k$ over $\bF$} if the dual complex $C^\bullet$ is cohomologically harmonic.
\end{definition}

\begin{bigthm}[Hodge Decomposition over Arbitrary Fields]\label{thm:hodge-decomposition}
  Let $C_\bullet$ be a chain complex of finite-dimensional vector spaces with chosen bases, and let $\mathcal L_k:C_k\to C_k$ be the discrete Laplacian $\partial\partial^* + \partial^*\partial$.
  The vector space $C_k$ decomposes into a direct sum of subspaces
  \[
    C_k = \ker \mathcal{L}_k \oplus B_k \oplus B^k
  \]
  if and only if both $C_\bullet$ is homologically harmonic in degree $k$, and the dual complex $C^\bullet$ is cohomologically harmonic in degree $k$.
  
  In this case, the subspaces in the decomposition are orthogonal with respect to the symmetric bilinear form, and the component $\ker \mathcal L_k$ is equal to $Z_k \cap Z^k$ . Furthermore, 
  $\ker \mathcal {L}_k$ is isomorphic to both $H_k$ and $H^k$ through the natural maps sending (co)cycles to their respective (co)homology classes.
\end{bigthm}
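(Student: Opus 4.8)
The plan is to prove the two directions of the equivalence separately and then read off the ``in this case'' assertions from the argument for the forward direction. The linear-algebraic backbone is the pair of identities $(\im A)^{\perp}=\ker A^{*}$ for a map between finite-dimensional spaces carrying the nondegenerate forms coming from their bases; applied to $\partial_{k+1}$ and to $\partial_k^{*}$ these give $B_k^{\perp}=\ker\partial_{k+1}^{*}=Z^k$ and $(B^k)^{\perp}=\ker\partial_k=Z_k$. Combined with the inclusions $B_k\subseteq Z_k$ and $B^k\subseteq Z^k$ coming from $\partial^2=0$, and with the orthogonality $b_k(\partial_{k+1}a,\partial_k^{*}v)=b_{k-1}(\partial_k\partial_{k+1}a,v)=0$ (so that $B_k\perp B^k$), this is essentially all the structure the argument needs.

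For the reverse direction, assume $C_k=\ker\mathcal L_k\oplus B_k\oplus B^k$. If $x\in B_k\cap Z^k$, then $\partial_k x=0$ since $x\in B_k\subseteq Z_k$, and $\partial_{k+1}^{*}x=0$ since $x\in Z^k$; hence $\mathcal L_k x=0$, so $x\in\ker\mathcal L_k\cap B_k=0$. Thus $B_k\cap Z^k=0$, and Theorem~\ref{thm:TFAE-list} (statement~\ref{it:A-TrivialIntersection}) gives that $C_\bullet$ is homologically harmonic in degree $k$. The mirror-image argument, applied to $x\in B^k\cap Z_k$ using $x\in Z_k$ and $x\in B^k\subseteq Z^k$, shows $B^k\cap Z_k=0$, which is Theorem~\ref{thm:TFAE-list} applied to the dual complex, so $C^\bullet$ is cohomologically harmonic in degree $k$. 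This direction is short.

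For the forward direction, assume $C_\bullet$ is homologically harmonic and $C^\bullet$ is cohomologically harmonic in degree $k$. By Theorem~\ref{thm:TFAE-list} (statement~\ref{it:A-TrivialIntersection}, for $C_\bullet$ and for $C^\bullet$) we have $B_k\cap Z^k=0$ and $B^k\cap Z_k=0$, i.e.\ $B_k$ and $B^k$ are nondegenerate subspaces; moreover $B^k\subseteq Z^k=B_k^{\perp}$, so $B_k\perp B^k$ and $B_k\cap B^k=0$. Hence $W:=B_k\oplus B^k$ is an orthogonal direct sum of nondegenerate subspaces, so $W$ is nondegenerate, and since the form on $C_k$ is nondegenerate we obtain $C_k=W\oplus W^{\perp}$ with $W^{\perp}=B_k^{\perp}\cap(B^k)^{\perp}=Z_k\cap Z^k$. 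Thus $C_k=B_k\oplus B^k\oplus(Z_k\cap Z^k)$, and the three summands are pairwise orthogonal (using $Z_k\cap Z^k\subseteq B_k^{\perp}$ and $Z_k\cap Z^k\subseteq(B^k)^{\perp}$). It then remains to identify $\ker\mathcal L_k$ with $Z_k\cap Z^k$. The inclusion $Z_k\cap Z^k\subseteq\ker\mathcal L_k$ is immediate. For the reverse inclusion I would use that $\mathcal L_k$ preserves each summand — it vanishes on $Z_k\cap Z^k$, sends $b\mapsto\partial_{k+1}\partial_{k+1}^{*}b$ on $B_k$ (because $\partial_k b=0$ there), and sends $b'\mapsto\partial_k^{*}\partial_k b'$ on $B^k$ (because $\partial_{k+1}^{*}b'=0$ there) — together with the fact that $\mathcal L_k$ is \emph{injective} on both $B_k$ and $B^k$. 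Injectivity on $B_k$: if $b\in B_k$ and $\partial_{k+1}\partial_{k+1}^{*}b=0$, then the equivalent condition $\ker(\partial_{k+1}\partial_{k+1}^{*})=\ker\partial_{k+1}^{*}$ supplied by Theorem~\ref{thm:TFAE-list} applied to $C^\bullet$ forces $\partial_{k+1}^{*}b=0$, whence $b\in B_k\cap Z^k=0$; injectivity on $B^k$ is the dual argument, using $\ker(\partial_k^{*}\partial_k)=\ker\partial_k$ for $C_\bullet$. Writing $c\in\ker\mathcal L_k$ as $c=c_0+b+b'$ along the decomposition and matching the $B_k$- and $B^k$-components of $0=\mathcal L_k c=\mathcal L_k b+\mathcal L_k b'$ then forces $b=b'=0$, so $c\in Z_k\cap Z^k$. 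Finally, since $\ker\mathcal L_k=Z_k\cap Z^k=Z^k\cap Z_k$, statement~\ref{it:A-HomologyIsomorphism} of Theorem~\ref{thm:TFAE-list}, applied to $C_\bullet$ and to $C^\bullet$, identifies this space with $H_k$ and with $H^k$ via the (co)cycle-to-class maps.

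The hardest part I expect is the injectivity of $\mathcal L_k$ on $B_k$ and on $B^k$. Over $\mathbb R$ or $\mathbb C$ this is free from the usual positive-semidefiniteness of the Laplacian, but no such positivity is available over a general field, so one must instead extract these injectivity facts from the ``kernel of a composition'' conditions bundled into Theorem~\ref{thm:TFAE-list} (applied to $C_\bullet$ and to its dual); this is precisely where the harmonicity hypotheses are used beyond the bare nondegeneracy of $B_k$ and $B^k$. Everything else is routine bookkeeping with orthogonal complements of a possibly-degenerate bilinear form, for which the standard references on bilinear forms suffice.
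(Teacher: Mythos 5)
Your argument is correct, and it uses the same toolkit as the paper (Theorem~\ref{thm:TFAE-list}, the identities $B_k^\perp = Z^k$, $(B^k)^\perp = Z_k$, orthogonality of $B_k$ and $B^k$), but it packages a couple of the steps differently. In the forward direction the paper invokes item~(\ref{it:A-OrthogonalSum}) twice to obtain the two orthogonal decompositions $C_k = Z^k \oplus B_k$ and $C_k = Z_k \oplus B^k$ and then intersects with $Z^k$; you instead observe that $W = B_k \oplus B^k$ is an orthogonal sum of nondegenerate subspaces, hence nondegenerate, and take $W^\perp = Z_k \cap Z^k$. Both get to $C_k = (Z_k\cap Z^k) \oplus B_k \oplus B^k$ with the same effort. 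For the identification $\ker\mathcal L_k = Z_k\cap Z^k$, the paper has a noticeably shorter route: for $x \in \ker\mathcal L_k$, write $\partial_k^*\partial_k x = -\partial_{k+1}\partial_{k+1}^* x$ and note that the left side is in $B^k$ while the right side is in $B_k$, so both vanish since $B_k\cap B^k = 0$; then item~(\ref{it:A-KernelOfComposition}) applied to $C_\bullet$ and $C^\bullet$ gives $\partial_k x = \partial_{k+1}^* x = 0$. Your version instead shows $\mathcal L_k$ preserves each of the three summands and is injective on $B_k$ and $B^k$, extracting injectivity from the same kernel-of-composition conditions. That is correct, but a bit longer; the paper's version skips the invariance bookkeeping entirely. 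Your converse direction is essentially identical to the paper's. One small thing worth spelling out in your forward direction: the claim $B_k\cap B^k = 0$ (needed before you can write $W = B_k\oplus B^k$) deserves a one-line justification from $B_k\perp B^k$ together with nondegeneracy of $B_k$, e.g.\ any $x\in B_k\cap B^k$ lies in $B_k\cap B_k^\perp = 0$.
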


While Theorem~\ref{thm:hodge-decomposition} provides an answer to (Q1) in the general setting of chain complexes, we are most interested in the case when $C_\bullet = C_\bullet(X;\F{})$ arises from some degree-wise finite CW complex $X$. In this case, Theorem~\ref{thm:hodge-decomposition} provides necessary and sufficient conditions for a decomposition
\[C_k(X;\F{}) = \ker(\mathcal L_k) \oplus B_k(X;\F{}) \oplus B^k(X;\F{}).\]








In Section~\ref{sec:rational-methods}, we focus on the special case of cellular
homology of a degree-wise finite CW complex $X$ over finite fields. In this
special case, we can ask for which primes $p$ the complex $X$ is homologically
harmonic over $\F{p}$. Our approach is to take the matrix sending each rational
cycle to its unique harmonic representative and use it to construct a similar
map over a finite field $\F{p}$.  Since rational homology cannot detect
torsion, this approach requires that $H_k(X)$ have no $p$-torsion. The
feasibility of this construction also depends on the entries of the matrix,
which we show fall within a certain subring of $\mathbb{Q}$ determined by
the combinatorial structure of $X$. In the process we prove the following
result, which we believe to be of independent interest. For an integer $N$, let
$\Z[N^{-1}]$ denote the localization $S^{-1}\Z$, where
$S = \{1, N, N^2, N^3, \ldots\}$.

\begin{bigthm}\label{thm:harmonic-coefficient-subring}
  For a finite CW complex $X$, there exists an integer
  $\Upsilon = \Upsilon(X,k)$ depending on the combinatorial structure of $X$ so that, for any integral homology class in degree $k$, its rational harmonic representative has coefficients in $\Z[\Upsilon^{-1}]$.
\end{bigthm}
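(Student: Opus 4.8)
The plan is to realize the rational harmonic representative as an explicit orthogonal projection built from the integer incidence matrices of $X$, and then bound its denominators by the adjugate (Cramer's rule) formula.

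First I would record the projection description of the harmonic representative. Since $\mathbb{Q}$ has characteristic $0$, all the equivalent conditions of Theorem~\ref{thm:TFAE-list} hold for $C_\bullet(X;\mathbb{Q})$ in every degree; in particular $C_k(X;\mathbb{Q}) = B_k \oplus Z^k$ with $Z^k = B_k^{\perp} = \ker(\partial_{k+1}^*)$ and $B_k \subseteq Z_k$. An integral homology class $\gamma \in H_k(X;\Z)$ has an integral cycle representative $z$, i.e.\ a $\Z$-linear combination of $k$-cells lying in $\ker \partial_k$. Writing $z = b + h$ according to this decomposition, with $b \in B_k$ and $h \in Z^k$, the component $h$ lies in $Z_k \cap Z^k$ and represents the image of $\gamma$ in $H_k(X;\mathbb{Q})$, so by uniqueness $h$ is the rational harmonic representative; equivalently $h = z - P_{B_k}(z)$, where $P_{B_k}$ is orthogonal projection onto $B_k$, and this is also $\pi^{\dagger}(\gamma)$ in the notation of Theorem~\ref{thm:TFAE-list}(\ref{it:A-PseudoinverseExistence}).

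Next I would make $P_{B_k}$ explicit. Let $D = \partial_{k+1}$ be the integer matrix whose columns are indexed by the $(k+1)$-cells of $X$, so $B_k = \im D$ and $r := \rank_{\mathbb{Q}} D = \dim B_k$. Choose a set $S$ of $r$ of the $(k+1)$-cells whose columns $D_S$ form a $\mathbb{Q}$-basis of $B_k$ (such an $S$ exists since $\rank_{\mathbb{Q}} D = r$). Then $D_S$ is an integer matrix of full column rank $r$, the $r \times r$ integer matrix $D_S^{\top} D_S$ is symmetric and nonsingular (because $D_S$ has full column rank and the standard form is positive definite on $C_k(X;\mathbb{Q})$), and $P_{B_k} = D_S (D_S^{\top} D_S)^{-1} D_S^{\top}$. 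Now set $\Upsilon = \Upsilon(X,k) := \det(D_S^{\top} D_S)$, a positive integer depending only on the incidence numbers of $X$ and on the combinatorial choice of $S$; by Cauchy--Binet it equals $\sum_{T}(\det D_{T,S})^2$, the sum over $r$-element sets $T$ of $k$-cells of the square of the minor of $D$ on rows $T$ and columns $S$. Since $(D_S^{\top} D_S)^{-1} = \Upsilon^{-1}\operatorname{adj}(D_S^{\top} D_S)$ has all entries in $\Upsilon^{-1}\Z$, and $D_S$, $D_S^{\top}$, $z$ are integral, the vector $h = z - D_S (D_S^{\top} D_S)^{-1} D_S^{\top} z$ has all coordinates in $\Upsilon^{-1}\Z \subseteq \Z[\Upsilon^{-1}]$, which is the claim. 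If desired, one can take $\Upsilon$ to be the gcd of $\det(D_S^{\top}D_S)$ over all valid $S$ for a sharper bound, but only existence is asserted.

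I do not expect a serious obstacle here; the two points that need care are (i) checking that the harmonic representative is genuinely this orthogonal projection --- precisely where one invokes that the conditions of Theorem~\ref{thm:TFAE-list}, in particular $C_k = B_k \oplus Z^k$ and $\im(\partial_{k+1}^*\partial_{k+1}) = \im \partial_{k+1}^*$, hold automatically in characteristic $0$, so that the relevant system is consistent and $h$ is well defined independently of the chosen solution --- and (ii) insisting at the outset on an \emph{integral} cycle representative of $\gamma$, since an arbitrary rational representative could already carry unbounded denominators. The remaining work is the routine verification that the orthogonal projection onto $\im D$ is given by $D_S(D_S^{\top}D_S)^{-1}D_S^{\top}$ and the packaging of the combinatorial integer $\Upsilon$.
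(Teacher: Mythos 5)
Your proof is correct and proves the statement, but it takes a genuinely different and more elementary route than the paper. The paper works with the quotient projection $\pi\colon C_k \to C_k/B_k$ and invokes an explicit \emph{averaging} formula for the Moore--Penrose pseudoinverse of a full-rank matrix, $\pi^\dagger = \tfrac{1}{\nabla}\sum_S t_S\, i_S(\pi_S)^{-1}$, where the sum runs over all column bases $S$ and $t_S = \det(\pi_S)^2$ (this is \cite[Thm~A]{catanzaro2017} adapted from \cite[Thm~2.1]{ben-tal_geometric_1990}). Via the correspondence between such $S$ and $k$-dimensional spanning cotrees $L$, with $t_S = a_L^2$ and $\det(\pi_S) = \pm a_L$, this yields the paper's canonical $\Upsilon = (\sum_L a_L^2)\prod_L a_L$: the factor $\sum_L a_L^2$ is the denominator $\nabla$ of the averaging formula and the factor $\prod_L a_L$ accounts for inverting each $\pi_S$. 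You instead project onto $B_k = \im \partial_{k+1}$ using a \emph{single} full-column-rank submatrix $D_S$ of $\partial_{k+1}$ and bound denominators by Cramer's rule applied to $D_S^\top D_S$; this is shorter and avoids the spanning-(co)tree machinery entirely. The trade-off is that your $\Upsilon = \det(D_S^\top D_S)$ depends on the choice of $S$ (a choice of a $(k+1)$-dimensional spanning tree, in the paper's terminology), whereas the paper's $\Upsilon$ is canonical and, more importantly, has the combinatorial spanning-cotree description that is exploited later in Section~6 to simplify $\Upsilon$ on orientable surfaces and relate it to $\det\widehat{\mathcal L}_1$. For Theorem~\ref{thm:harmonic-coefficient-subring} as literally stated --- mere existence of some integer $\Upsilon$ depending only on the combinatorics of $X$ --- your argument is complete (fixing, say, the lexicographically first valid $S$ makes the dependence purely combinatorial), and your observation that one must start from an \emph{integral} cycle representative is exactly the point that also matters implicitly in the paper's proof.
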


This result is justified by Theorem~\ref{thm:projection-entries}, which is stated after we give a rigorous definition of $\Upsilon$ in Definition~\ref{def:constant-D}. The primary result of Sections~\ref{sec:rational-methods} and \ref{sec:orientable-surfaces} is the following sufficient condition for the homological harmonicity of $X$.

\begin{bigthm}\label{thm:fixed-X-variable-p}
  If $p \nmid \Upsilon(X,k)$, and $H_k(X)$ has no $p$-torsion, then $X$ is homologically harmonic over $\F{p}$ in degree $k$. That is, every homology class in $H_k(X;\F{p})$ has a unique harmonic representative.

  Furthermore, if $k=1$ and $X$ is a CW structure on a connected orientable surface, $\Upsilon$ is given by
  \[\Upsilon = \det \widehat{\mathcal L}_1 / |X_{2}| \, ,\]
  where $\widehat{\mathcal L}_1 = \mathcal{L}_1|_{B_1(X)}: B_1(X;\F p) 
  \stackrel{\cong}{\longrightarrow} B_1(X;\F p)$ is
  the Laplacian restricted to the $1$-boundaries.
\end{bigthm}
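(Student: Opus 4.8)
The plan is to prove the two clauses of Theorem~\ref{thm:fixed-X-variable-p} in sequence, using Theorem~\ref{thm:harmonic-coefficient-subring} for the first and a direct analysis of the surface case for the second. For the first clause, I would argue by reducing the rational harmonic projection modulo $p$. By Theorem~\ref{thm:TFAE-list}, homological harmonicity of $X$ over $\F p$ in degree $k$ is equivalent to the existence of the Moore--Penrose pseudoinverse $\pi^\dagger$ of $\pi\colon C_k(X;\F p)\to C_k(X;\F p)/B_k(X;\F p)$, or equivalently to $B_k\cap Z^k=0$ over $\F p$. Over $\Q$ the map $\pi^\dagger_\Q$ exists (characteristic 0), and it is realized by an integer matrix divided by a denominator; Theorem~\ref{thm:harmonic-coefficient-subring} tells us that when we apply it to any \emph{integral} class the result has coefficients in $\Z[\Upsilon^{-1}]$. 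First I would lift a basis of $H_k(X;\F p)$ to integral cycles — here is exactly where the no-$p$-torsion hypothesis is used, since it guarantees that $H_k(X)\otimes\F p\cong H_k(X;\F p)$, so an integral basis reduces to an $\F p$-basis after tensoring. Applying the rational harmonic projection to these integral cycles and reducing mod $p$ (legitimate because $p\nmid\Upsilon$, so the coefficients lie in $\Z_{(p)}$) produces, for each class, a chain over $\F p$ that is both a cycle and a cocycle. The remaining check is that reduction mod $p$ commutes appropriately: a mod-$p$ reduction of a rational cycle that is also a cocycle is still a cycle and a cocycle over $\F p$, because $\partial$ and $\partial^*$ are defined by integer (incidence) matrices, and being in the kernel of an integer matrix is preserved under reduction. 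This exhibits a harmonic representative in every class of $H_k(X;\F p)$ — giving existence — and one then invokes uniqueness: I would verify condition~(\ref{it:A-UniquenessAt0}) by noting that a harmonic representative of $[0]$ over $\F p$ lifting compatibly would have to be a mod-$p$ reduction of something in $B_k\cap Z^k$ over $\Q$, which is $0$; more carefully, one shows $B_k(X;\F p)\cap Z^k(X;\F p)=0$ directly from $p\nmid\Upsilon$ by the same denominator-clearing argument applied to the projection $\pi^\dagger$.

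For the second clause, I would specialize to $k=1$ and $X$ a CW structure on a connected orientable surface. The key structural facts are that $H_2(X;\Q)\cong\Q$ (generated by the fundamental class, so $\partial_2$ has a one-dimensional kernel) and that $\partial_1$ is surjective onto $B_0$. I want to identify $\Upsilon$ with $\det\widehat{\mathcal L}_1 / |X_2|$, where $\widehat{\mathcal L}_1$ is the Laplacian restricted to $B_1(X)$. The approach is to trace through the definition of $\Upsilon$ (Definition~\ref{def:constant-D}, which I am assuming) and the formula for $\pi^\dagger$ in Section~\ref{sec:computation}. The harmonic projection onto $Z_1\cap Z^1$ should be expressible via $\mathcal L_1$; in characteristic $0$, $C_1 = (Z_1\cap Z^1)\oplus B_1\oplus B^1$ and $\mathcal L_1$ acts invertibly on $B_1\oplus B^1$, killing $Z_1\cap Z^1$. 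The denominators appearing in $\pi^\dagger$ are controlled by $\det$ of $\mathcal L_1$ restricted to the non-harmonic part. Because $X$ is an orientable surface, $B^1$ and $B_1$ are closely related by Poincaré duality, and in fact $\mathcal L_1$ restricted to $B_1$ already captures the relevant invertible block; the factor $|X_2|$ enters because $\partial_2\partial_2^*$ on $B_1$ has a predictable relationship to the count of $2$-cells — concretely, $\partial_2^*\partial_2$ acting on $C_2\cong\Q^{|X_2|}$ has the all-ones-type kernel corresponding to the fundamental class, and its nonzero behavior contributes the factor $|X_2|$ upon restricting and taking determinants (a Matrix-Tree-type cofactor computation). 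I would make this precise by choosing the standard cellular bases, writing $\mathcal L_1 = \partial_2\partial_2^* + \partial_1^*\partial_1$, observing $B_1\perp B^1$ and that each summand of $\mathcal L_1$ acts within its respective block, and then computing $\det(\mathcal L_1|_{B_1})$ in terms of $\partial_2$ and comparing with the denominator that Definition~\ref{def:constant-D} extracts from $\pi^\dagger$. Section~\ref{sec:orientable-surfaces} is cited as carrying this out, so I would defer the bookkeeping there and here only set up the identification.

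The main obstacle I anticipate is the second clause: pinning down the exact constant, including the precise power to which $|X_2|$ appears and confirming it is exactly a single factor rather than a higher power or a more complicated cofactor. This requires understanding how Definition~\ref{def:constant-D} extracts $\Upsilon$ from the pseudoinverse — in particular whether it takes the largest invariant factor, the determinant of a maximal nonsingular block, or the lcm of denominators over a generating set of integral classes — and then matching that against the Cauchy--Binet / Matrix-Tree expansion of $\det(\partial_2\partial_2^*|_{B_1})$ for a surface, where the rank of $\partial_2$ is $|X_2|-1$. A secondary subtlety in the first clause is making sure the reduction-mod-$p$ argument is genuinely compatible across all classes simultaneously (not just class by class), i.e.\ that the mod-$p$ reduction of the rational harmonic projection is well-defined as a map on $C_1(X;\F p)$ and lands in $Z_1\cap Z^1$; this is where I would be most careful, though it follows cleanly once one knows $\pi^\dagger_\Q$ has entries in $\Z_{(p)}$ and that $Z_1$, $Z^1$ are cut out by integer matrices. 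Everything else — the equivalence with conditions of Theorem~\ref{thm:TFAE-list}, the no-$p$-torsion reduction on homology — is routine.
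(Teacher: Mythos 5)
There is a genuine gap in the first clause. Your existence argument reduces the rational harmonic representative $\pi^\dagger\pi(z)$ mod $p$ and checks it is a cycle and a cocycle, but you never verify that $r_p(\pi^\dagger\pi(z))$ lies in the same $\F{p}$-homology class as $r_p(z)$. The difference $\pi^\dagger\pi(z)-z$ is a rational boundary with $\Z_{(p)}$ coefficients, but a priori its mod-$p$ reduction need not be an $\F{p}$-boundary; closing exactly this gap is where the no-$p$-torsion hypothesis does its real work. The paper handles it by showing $R_p(\pi^\dagger\pi)$ is the orthogonal projection onto $Z^k(X;\F{p})$: the nontrivial step is that $R_p(\pi^\dagger\pi)$ fixes every $\F{p}$-cocycle, which is proved by lifting $\F{p}$-cocycles to $\Z[\Upsilon^{-1}]$-cocycles via the snake lemma on $0\to\Z[\Upsilon^{-1}]\xrightarrow{\cdot p}\Z[\Upsilon^{-1}]\to\F{p}\to0$ together with the $p$-torsion hypothesis on $H_k(X)$. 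Your remark that ``being in the kernel of an integer matrix is preserved under reduction'' addresses the easy direction, not this lifting. Relatedly, your plan to establish uniqueness separately is on shakier ground than you suggest: existence and uniqueness are genuinely independent conditions (Example~\ref{ex:pinched-cylinder-subdivided}), and the proposed ``same denominator-clearing argument'' for $B_k\cap Z^k=0$ is left unspecified. The paper avoids the issue altogether by proving $C_k(X;\F{p})=Z^k+B_k$ (not class by class, but using that $R_p(\pi^\dagger\pi)$ is a self-adjoint projection onto $Z^k$ to split any chain into a cocycle plus a piece orthogonal to all cocycles, hence a boundary); a rank-nullity dimension count then upgrades the sum to a direct sum, giving statement~(\ref{it:A-OrthogonalSum}) of Theorem~\ref{thm:TFAE-list} and hence both existence and uniqueness at once.

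For the second clause your proposal is speculative and does not connect to the actual definition of $\Upsilon$ (Definition~\ref{def:constant-D}: the product of all spanning-cotree weights $a_L=|H_k(X,L)|$ times the sum of their squares). The paper's route is to show every $1$-dimensional spanning cotree of an orientable surface has weight $a_L=1$, via the duality $a_L=\theta_{L^\perp}$ with a spanning tree of the dual cell complex $M^*$ and the observation that $H_0$ of a $1$-dimensional spanning tree is free; this collapses $\Upsilon$ to the number of spanning cotrees, which is then identified with $\det\widehat{\mathcal L}_1/|X_2|$ by the Higher Matrix-Tree Theorem (Proposition~\ref{prop:higher_matrix_tree}), using $\theta_X=1$ for an orientable surface and $\theta_T=1$ for every $2$-dimensional spanning tree. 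Your instinct that a Matrix-Tree computation underlies the formula is correct, but the mechanism runs through cotree weights rather than a direct block analysis of $\mathcal L_1$ on $B_1\oplus B^1$, and you would not be able to pin down the single factor of $|X_2|$ without the cotree-weight bookkeeping.
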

Since only finitely many primes divide $\Upsilon$, and $H_k(X)$ has nontrivial $p$-torsion for only finitely many primes $p$, we get the following corollary.
\begin{corollary}
  Given a degree-wise finite CW complex $X$ and $k\geq 0$, $X$ is homologically harmonic in degree $k$ over $\F{p}$ for all but finitely many primes $p$.
\end{corollary}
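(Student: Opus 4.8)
The plan is to deduce the corollary from Theorem~\ref{thm:fixed-X-variable-p} together with two elementary finiteness facts, after first reducing to the case of a \emph{finite} CW complex so that $\Upsilon(X,k)$ is defined. Whether $C_\bullet(X;\bF)$ is homologically harmonic in degree $k$ — that is, whether the statements of Theorem~\ref{thm:TFAE-list} hold at index $k$ — refers only to the chain groups $C_{k-1}, C_k, C_{k+1}$, the differentials $\partial_k, \partial_{k+1}$ and their adjoints, and the standard bilinear form on $C_k$; no other part of the complex enters those statements. When $X$ is degree-wise finite, its $(k+1)$-skeleton $X^{(k+1)}$ is a \emph{finite} CW complex (it has cells only in dimensions $0,1,\dots,k+1$, with finitely many in each), and the cellular chain complex $C_\bullet(X^{(k+1)};\bF)$ agrees with $C_\bullet(X;\bF)$ in degrees $\le k+1$, carrying the same cellular bases. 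Hence $X$ is homologically harmonic in degree $k$ over $\F p$ if and only if $X^{(k+1)}$ is, and we may assume from now on that $X$ is finite.

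With $X$ finite, Theorem~\ref{thm:harmonic-coefficient-subring} (made precise by Theorem~\ref{thm:projection-entries}) furnishes the integer $\Upsilon = \Upsilon(X,k)$, which is nonzero: over $\mathbb Q$ the chain complex has characteristic $0$, so every statement of Theorem~\ref{thm:TFAE-list} holds rationally, $\pi^\dagger$ exists over $\mathbb Q$, and $\Upsilon$ is a genuine nonzero integer. A nonzero integer has only finitely many prime divisors, so $S_1 := \{\,p \text{ prime} : p \mid \Upsilon\,\}$ is finite. Moreover, since $X$ is finite, $H_k(X)$ is a finitely generated abelian group, so its torsion subgroup is a finite group, and $H_k(X)$ has $p$-torsion for only the finitely many primes dividing the order of that subgroup; call this finite set $S_2$. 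For any prime $p \notin S_1 \cup S_2$ we have $p \nmid \Upsilon(X,k)$ and $H_k(X)$ has no $p$-torsion, so Theorem~\ref{thm:fixed-X-variable-p} applies and $X$ — hence the original degree-wise finite complex — is homologically harmonic in degree $k$ over $\F p$. As $S_1 \cup S_2$ is finite, the corollary follows.

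As the placement of the statement suggests, this corollary is essentially bookkeeping: all of the mathematical weight sits in Theorems~\ref{thm:harmonic-coefficient-subring} and~\ref{thm:fixed-X-variable-p} and in the nonvanishing of $\Upsilon$. The one step that requires genuine care is the reduction to finite complexes, which must be carried out at the level of the chain groups $C_{k-1}, C_k, C_{k+1}$ rather than the whole complex, precisely because Theorems~\ref{thm:harmonic-coefficient-subring} and~\ref{thm:fixed-X-variable-p} are stated only for finite $X$.
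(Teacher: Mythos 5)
Your proof is correct and follows the same bookkeeping argument the paper uses (finitely many primes divide $\Upsilon$, finitely many primes give $p$-torsion, so Theorem~\ref{thm:fixed-X-variable-p} applies outside a finite set). The reduction to the finite complex $X^{(k+1)}$ — needed because Theorems~\ref{thm:harmonic-coefficient-subring} and~\ref{thm:fixed-X-variable-p} require $X$ finite so that $\Upsilon(X,k)$ is defined, whereas the corollary allows degree-wise finite $X$ — is a genuine detail that the paper's one-sentence justification glosses over, and you are right that it is valid since homological harmonicity in degree $k$ depends only on $C_{k-1}, C_k, C_{k+1}$ and the maps between them.
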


Finally, in Section~\ref{sec:experiments}, we put the above theorems to work by computing and plotting some examples of harmonic representatives over various fields.
Our experiments will include examples on orientable surfaces and Vietoris Rips complexes.

\section{Existence and Uniqueness of Harmonic Representatives}\label{sec:existence-and-uniqueness}
In this section, we explore the properties of a chain complex that affect the existence and uniqueness of harmonic representatives.
For a fixed CW complex,
a given homology class may have
multiple harmonic representatives or none at all depending on the characteristic of the
coefficient field.
More generally, a homology class of a chain complex may fail to exhibit a harmonic representative, or have many.
We begin by
considering examples that demonstrate failures of existence and uniqueness.

\begin{example}\label{ex:pinched-cylinder}
  Consider a CW complex $X$ with a single 0-cell, two 1-cells denoted $a$ and $b$,
  and a single 2-cell denoted $E$ with an attaching map that travels once
  around $a$, then once around $b$. The resulting CW complex, depicted in
  Figure~\ref{fig:existenceExample}, can be thought of as a cylinder in which a longitudinal line
  segment has been collapsed. We will consider
  the homology of this complex with $\F{2}$ coefficients.

  Since $X$ is homotopy equivalent to a circle, it has exactly one nontrivial
  homology class in degree 1, namely the class $[a] = \{a,b\}$. However,
  neither $a$ nor $b$ is a cocycle, since $\partial_2^*(a) = \partial_2^*(b) = E$.
  Therefore the homology class $[a]$ has no $\F{2}$-harmonic representative.

  In addition, both cycles in the trivial homology class $[0] = \{0,a+b\}$ are
  cocycles since $\partial_2^*(a+b) = 2E = 0$, 
  so $[0]$ has two $\F{2}$-harmonic representatives.
\end{example}

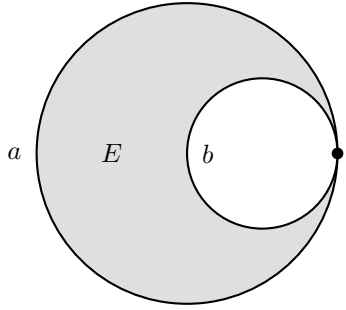
\begin{figure}
  \begin{tikzpicture}
    \draw[fill=gray!25!white,thick] (2,2) circle (2cm);
    \draw[fill=white,thick] (3,2) circle (1cm);
    \node at (4,2) [circle,draw=black,fill=black,inner sep=0.5mm] {};
    \node at (2,2) [circle,draw=none,fill=none,inner sep=0.5mm, label=right:{$b$}] {};
    \node at (0,2) [circle,draw=none,fill=none,inner sep=0.5mm, label=left:{$a$}] {};
    \node at (1,2) [circle,draw=none,fill=none,inner sep=0.5mm, label=center:{$E$}] {};
  \end{tikzpicture}
  \caption{ \label{fig:existenceExample}
  A CW complex that fails both existence and uniqueness of $\F2$-harmonic representatives.}
\end{figure}

\begin{example}\label{ex:pinched-cylinder-subdivided}
  Take the CW complex $X$ from Example~\ref{ex:pinched-cylinder} and subdivide
  the 2-cell $E$ with an edge $e$ into a pair of 2-cells $G$ and $H$ as pictured in
  Figure~\ref{fig:uniquenessExample}; call this new complex $X'$. We will
  again consider the homology of $X'$ with $\F2$ coefficients.
  As before, $X'$ has one nontrivial
  homology class in degree one. There are four cycles in the homology class
  of $b+b'$, and 
  computing the coboundary of each representative reveals that
  \begin{gather*}
    \partial_2^*(a+a') = \partial_2^*(b+b') = G + H\, , \mbox{ and}\\
    \partial_2^*(a+b'+e) = \partial_2^*(a'+b+e) =  0.
  \end{gather*}
  Hence the class of $b+b'$ has two $\F2$-harmonic representatives, namely
  $a+b'+e$ and $a'+b+e$.

  We can perform a similar computation beginning with the trivial cycle $0$.
  In this case, we find that the class of $0$ also has two $\F2$-harmonic
  representatives, given by $0$ and $a+a'+b+b'$. 
  Additionally, the 1-chain $e$ lies in the kernel of
  the laplacian $\mathcal L_1$, since $\partial_2\partial_2^*e =
  \partial_1^*\partial_1 e = a+b+a'+b'$. This further demonstrates that in general elements of
  $\ker \mathcal L_k$ need not be cycles, and that the containment $Z_k \cap
  Z^k \subseteq \mathcal L_k$ can be strict.

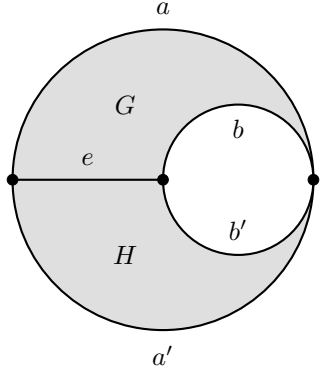
\begin{figure}
  \begin{tikzpicture}
    \draw[fill=gray!25!white,thick] (2,2) circle (2cm);
    \draw[fill=white,thick] (3,2) circle (1cm);
    \draw[thick] (0,2) -- (2,2);
    \node at (4,2) [circle,draw=black,fill=black,inner sep=0.5mm] {};
    \node at (2,2) [circle,draw=black,fill=black,inner sep=0.5mm] {};
    \node at (0,2) [circle,draw=black,fill=black,inner sep=0.5mm] {};
    \node at (3,3) [circle,draw=none,fill=none,inner sep=0.5mm, label=below:{$b$}] {};
    \node at (3,1) [circle,draw=none,fill=none,inner sep=0.5mm, label=above:{$b'$}] {};
    \node at (2,4) [circle,draw=none,fill=none,inner sep=0.5mm, label=above:{$a$}] {};
    \node at (2,0) [circle,draw=none,fill=none,inner sep=0.5mm, label=below:{$a'$}] {};
    \node at (1,2) [circle,draw=none,fill=none,inner sep=0.5mm, label=above:{$e$}] {};
    \node at (1.5,3) [circle,draw=none,fill=none,inner sep=0.5mm, label=center:{$G$}] {};
    \node at (1.5,1) [circle,draw=none,fill=none,inner sep=0.5mm, label=center:{$H$}] {};
  \end{tikzpicture}
  \caption{ \label{fig:uniquenessExample} A CW complex which fails uniqueness but not existence; every homology class has a non-unique $\F2$-harmonic representative.}
\end{figure}

\end{example}

\subsection{Uniqueness}
Given an $\bF$-harmonic representative for a homology class, we can measure
its failure to be unique by characterizing the set of all $\bF$-harmonic
representatives for its homology class.
Suppresing the field $\bF$ from the notation, let $\Har(z)$ be the set of all $\bF$-harmonic
representatives of $z \in H_k$.

\begin{proposition}\label{prop:uniqueness-measurement}
  When $\Har(z)$ is non-empty, it forms an additive torsor over $B_k \cap Z^k$.
  Equivalently, if $\Har(z) \neq \emptyset$, then $\Har(z) = h +
  (B_k \cap Z^k)$ for any $h\in \Har(z)$. In particular, the $\bF$-harmonic representatives of the 
  trivial homology class coincide with $B_k \cap Z^k$.
\end{proposition}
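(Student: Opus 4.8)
The plan is to prove the torsor statement directly from the definition of $\Har(z)$ and then read off the claim about the trivial class as the special case $z = [0]$.

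First I would fix $z \in H_k$ with $\Har(z) \neq \emptyset$ and pick some $h \in \Har(z)$. By definition $h$ is a harmonic chain, i.e.\ $h \in Z_k \cap Z^k$, and $[h] = z$. The goal is to show $\Har(z) = h + (B_k \cap Z^k)$, which I would do by mutual inclusion. For the inclusion $\supseteq$: if $c \in B_k \cap Z^k$, then $h + c$ is a cycle (sum of cycles, using $B_k \subseteq Z_k$), is a cocycle (sum of $h \in Z^k$ and $c \in Z^k$), and satisfies $[h+c] = [h] + [c] = z + 0 = z$ since $c \in B_k$ means $[c] = 0$. Hence $h + c \in \Har(z)$. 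For the inclusion $\subseteq$: if $h' \in \Har(z)$, set $c := h' - h$. Then $[c] = [h'] - [h] = z - z = 0$, so $c \in B_k$; moreover $c = h' - h$ is a difference of cocycles, hence $c \in Z^k$. Therefore $c \in B_k \cap Z^k$ and $h' = h + c$ lies in the claimed set. This establishes the torsor/coset description, and the fact that it is an additive torsor over $B_k \cap Z^k$ (rather than a mere coset) is immediate: the difference of any two elements of $\Har(z)$ lies in $B_k \cap Z^k$, and the action is free and transitive by the two inclusions just shown.

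For the final sentence, I would specialize to $z = [0] \in H_k$. Here $0$ itself is a cycle and a cocycle (the zero chain lies in every subspace), and $[0] = [0]$, so $0 \in \Har([0])$; in particular $\Har([0]) \neq \emptyset$, so the torsor statement applies with $h = 0$, giving $\Har([0]) = 0 + (B_k \cap Z^k) = B_k \cap Z^k$. This is exactly the assertion that the $\bF$-harmonic representatives of the trivial homology class coincide with $B_k \cap Z^k$.

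This proof is essentially a bookkeeping argument, so I do not expect a genuine obstacle; the only point requiring a moment of care is making sure every chain produced is simultaneously a cycle \emph{and} a cocycle, which uses $B_k \subseteq Z_k$ (so that boundaries are automatically cycles) together with the linearity of $\partial_k$ and $\partial_{k+1}^*$. If anything, the subtlety worth flagging is that harmonicity here means membership in $Z_k \cap Z^k$ (Definition~\ref{defn:harmonic}), not membership in $\ker \mathcal L_k$, so the closure arguments must be phrased in terms of $\partial_k$ and $\partial_{k+1}^*$ separately rather than in terms of the Laplacian.
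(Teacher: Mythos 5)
Your argument is correct and follows essentially the same mutual-inclusion strategy as the paper's own proof, with the same use of $B_k \subseteq Z_k$ and additivity to confirm harmonicity and homology class in each direction. Specializing to $z = [0]$ with $h = 0$ to obtain the final sentence is likewise what the paper does (there described as ``follows immediately'').
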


\begin{proof}
  Suppose $h,h' \in \Har(z)$. By definition, $h$, $h' \in Z^k$ and thus,
  $h'-h \in Z^k$. Since $[h]=[h'] = z$, $h$ and $h'$ must
  differ by a boundary. Hence $h'-h \in B_k\cap Z^k$, showing $h'\in
  h+(B_k\cap Z^k)$. For the reverse inclusion, let $h \in \Har(z)$ and
  pick an arbitrary element $h+b \in h+(B_k\cap Z^k)$. Since both $h$ and
  $b$ are cocycles, so is their sum $h+b$. Also, $[h+b] = [h] = z$, so
  we conclude $h+b$ is a harmonic representative of $z$. The second
  statement follows immediately.
\end{proof}

In general, $B_k = (Z^k)^\perp$ with respect to any symmetric non-degenerate
bilinear form on $C_k$, 
since $B_k = \im(\partial_{k+1}) = \ker(\partial^*_{k+1})^\perp = (Z^{k})^\perp$. This means
that $B_k \cap Z^k = 0$ if and only if $B_k$ (equivalently, $Z^k$) is a
nondegenerate subspace of $C_k$. Combined with
Proposition~\ref{prop:uniqueness-measurement}, this gives the following.

\begin{corollary}\label{cor:uniqueness-condition}
Every harmonic representative in $C_k$ is unique if and only if $B_k$
(equivalently $Z^k$) is a non-degenerate subspace of $C_k$ with respect to the bilinear form on $C_k$.
\end{corollary}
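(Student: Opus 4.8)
The plan is to deduce Corollary~\ref{cor:uniqueness-condition} directly from Proposition~\ref{prop:uniqueness-measurement} together with the identity $B_k = (Z^k)^\perp$. First I would record that the phrase ``every harmonic representative in $C_k$ is unique'' means precisely that for every homology class $z \in H_k$ the set $\Har(z)$ has at most one element. By Proposition~\ref{prop:uniqueness-measurement}, whenever $\Har(z) \neq \emptyset$ it is a coset $h + (B_k \cap Z^k)$, which is a singleton if and only if $B_k \cap Z^k = 0$. Since $B_k \cap Z^k$ does not depend on $z$, this shows the uniqueness condition holds for \emph{all} classes simultaneously if and only if $B_k \cap Z^k = 0$; and the case of the trivial class alone already forces this, since $\Har([0]) = B_k \cap Z^k$ is always nonempty (it contains $0$). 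So the homological content reduces to the single linear-algebraic equation $B_k \cap Z^k = 0$.

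Next I would translate $B_k \cap Z^k = 0$ into the statement that $B_k$ (equivalently $Z^k$) is a non-degenerate subspace. The key input, already noted in the text preceding the corollary, is that $B_k = \im(\partial_{k+1}) = \ker(\partial^*_{k+1})^\perp = (Z^k)^\perp$ with respect to the chosen non-degenerate symmetric bilinear form $b_k$ on $C_k$. (In finite dimensions with a non-degenerate form, $(\ker f)^\perp = \im f^*$, which is where this comes from; I would cite this as the standard fact it is.) Consequently $B_k \cap B_k^\perp = B_k \cap Z^k$, and by definition $B_k$ is a non-degenerate subspace exactly when $B_k \cap B_k^\perp = 0$. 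Symmetrically, taking orthogonal complements of the equality $B_k = (Z^k)^\perp$ and using $(U^\perp)^\perp = U$ in finite dimensions gives $Z^k = (B_k)^\perp$, so $Z^k \cap (Z^k)^\perp = Z^k \cap B_k$ as well, giving the parenthetical ``equivalently $Z^k$'' clause for free.

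Assembling the two reductions: every harmonic representative in $C_k$ is unique $\iff$ $B_k \cap Z^k = 0$ $\iff$ $B_k$ (equivalently $Z^k$) is a non-degenerate subspace of $C_k$. I do not anticipate a serious obstacle here — the proof is essentially a two-line chase once Proposition~\ref{prop:uniqueness-measurement} and the identity $B_k = (Z^k)^\perp$ are in hand. The only point requiring a word of care is the bookkeeping that a statement quantified over \emph{all} homology classes collapses to the single condition on the trivial class, since $B_k \cap Z^k$ is visibly independent of $z$ and is realized as $\Har([0])$; I would state this explicitly rather than leave it implicit, and I would also make sure to invoke finite-dimensionality where the identities $(\ker f)^\perp = \im f^*$ and $(U^\perp)^\perp = U$ are used.
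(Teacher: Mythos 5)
Your proposal is correct and follows essentially the same route as the paper: reduce via Proposition~\ref{prop:uniqueness-measurement} to the condition $B_k \cap Z^k = 0$, then invoke $B_k = \im(\partial_{k+1}) = \ker(\partial^*_{k+1})^\perp = (Z^k)^\perp$ to rephrase this as non-degeneracy of $B_k$ (equivalently $Z^k$). You are somewhat more explicit than the paper about the quantifier bookkeeping (that checking the trivial class suffices) and the double-orthocomplement step yielding $Z^k = B_k^\perp$, but the substance is identical.
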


\begin{remark}\label{rmk:char-0-uniqueness}
  If $\bF$ is a field of characteristic 0, every subspace of $C_k$ is
  non-degenerate. Corollary~\ref{cor:uniqueness-condition} then implies
  that harmonic representatives are always unique over a field of
  characteristic 0.
\end{remark}

\subsection{Existence}
We turn our attention upon the extent to which harmonic representatives exist.

  \begin{theorem}\label{thm:existence-measurement}
    Let $Q_k$ denote the subspace of all degree $k$-homology classes that have at least one harmonic representative. Then
    \[
      Q_k = \frac{Z_k \cap (Z^k + B_k)}{B_k} \, .
    \]
    The quotient $H_k/Q_k$ is isomorphic to $\frac{Z^k+Z_k}{Z^k+B_k}$, and in particular, the codimension of $Q_k$ in $H_k$ is $\dim(Z^k+Z_k)-\dim(Z^k+B_k)$.
  \end{theorem}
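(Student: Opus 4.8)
The plan is to work first at the level of chains and then pass to quotients. A homology class $[z]$, represented by a cycle $z\in Z_k$, has a harmonic representative exactly when some translate $z+b$ with $b\in B_k$ is also a cocycle, i.e.\ lies in $Z^k$ (here $Z^k$ is viewed inside $C_k$ via the identification of Remark~\ref{rem:identification}). This happens if and only if $z\in Z^k+B_k$. Since every representative of a class admitting a harmonic representative is itself a cycle, the cycles representing classes in $Q_k$ are precisely those in $Z_k\cap(Z^k+B_k)$. Because $B_k\subseteq Z_k$ and $B_k\subseteq Z^k+B_k$, the boundary space $B_k$ is contained in $Z_k\cap(Z^k+B_k)$, so taking the quotient by $B_k$ gives
\[
  Q_k=\frac{Z_k\cap(Z^k+B_k)}{B_k};
\]
in particular this shows $Q_k$ really is a subspace of $H_k$.

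For the second statement I would apply the third isomorphism theorem to the chain of subspaces $B_k\subseteq Z_k\cap(Z^k+B_k)\subseteq Z_k$, obtaining
\[
  H_k/Q_k=\frac{Z_k/B_k}{\bigl(Z_k\cap(Z^k+B_k)\bigr)/B_k}\;\cong\;\frac{Z_k}{Z_k\cap(Z^k+B_k)}.
\]
Then, since $B_k\subseteq Z_k$ forces $Z^k+B_k\subseteq Z^k+Z_k$, the second (diamond) isomorphism theorem applied to the subspaces $Z_k$ and $Z^k+B_k$ of $C_k$ identifies
\[
  \frac{Z_k}{Z_k\cap(Z^k+B_k)}\;\cong\;\frac{Z_k+(Z^k+B_k)}{Z^k+B_k}=\frac{Z^k+Z_k}{Z^k+B_k},
\]
where the final equality again uses $B_k\subseteq Z_k$. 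Composing the two isomorphisms yields $H_k/Q_k\cong (Z^k+Z_k)/(Z^k+B_k)$, and taking dimensions (everything is finite-dimensional) gives $\dim(H_k/Q_k)=\dim(Z^k+Z_k)-\dim(Z^k+B_k)$, which is the codimension of $Q_k$ in $H_k$.

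There is no substantial obstacle here beyond bookkeeping: the points that need care are that $Z^k$ must be regarded as a subspace of $C_k$ so that the sums $Z^k+B_k$ and $Z^k+Z_k$ are meaningful, and that the containments $B_k\subseteq Z_k$ and $Z^k+B_k\subseteq Z^k+Z_k$ are exactly what make the two isomorphism theorems applicable. If I wanted to avoid citing the diamond lemma by name, I would instead verify directly that the composite $Z_k\hookrightarrow Z^k+Z_k\twoheadrightarrow (Z^k+Z_k)/(Z^k+B_k)$ is surjective with kernel $Z_k\cap(Z^k+B_k)$ and invoke only the first isomorphism theorem.
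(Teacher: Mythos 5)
Your proof is correct and follows essentially the same route as the paper's: establish the equality $Q_k=\bigl(Z_k\cap(Z^k+B_k)\bigr)/B_k$ by a chain-level characterization, then apply the third and second isomorphism theorems (in the same order) to identify $H_k/Q_k$ with $(Z^k+Z_k)/(Z^k+B_k)$. The only cosmetic difference is that the paper verifies the first equality by a two-sided containment argument, whereas you phrase it as an ``if and only if'' on cycles, but the underlying reasoning is the same.
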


\begin{proof}
  First we show that $Q_k=\frac{Z_k \cap (Z^k + B_k)}{B_k}$ by dual
  containment. Let $z + B_k\in Q_k$. Write $z
  + B_k = h + B_k$ for some harmonic chain $h$. Since $h\in Z_k\cap Z^k
  \subseteq Z_k\cap (Z^k + B_k)$, we can conclude $h + B_k \in \frac{Z_k
  \cap (Z^k + B_k)}{B_k}$. For the reverse inclusion, let $z +B_k \in
  \frac{Z_k \cap (Z^k + B_k)}{B_k}$, so that $z \in Z_k \cap (Z^k + B_k)$.
  Since $z\in Z_k$, $z + B_k \in H_k$. Furthermore, $z = z' + b$ for some
  $z'\in Z^k$, $b\in B_k$. Since both $b$ and $z$ are cycles, $z'$ is a
  cycle as well. Hence $z'$ is a representative of the homology class of
  $z$ that is both a cycle and a cocycle. This implies $z + B_k \in Q_k$.

  Now that we have shown $Q_k=\frac{Z_k \cap (Z^k + B_k)}{B_k}$, we have
  \[
    H_k/Q_k = \frac{\frac{Z_k}{B_k}}{\frac{Z_k \cap (Z^k + B_k)}{B_k}} \cong \frac{Z_k}{Z_k \cap (Z^k + B_k)} \, .
  \]
  By the second isomorphism theorem,
  \[
    \frac{Z_k}{Z_k \cap (Z^k + B_k)} \cong \frac{Z^k + B_k + Z_k}{Z^k + B_k} = \frac{Z^k + Z_k}{Z^k + B_k} \, . \qedhere
  \]
\end{proof}

\begin{remark}
  As discussed in Remark~\ref{rmk:char-0-uniqueness}, over a field of
  characteristic 0, $B_k$ and $Z^k$ are orthogonal complements in $C_k$.
  Therefore $Z^k+B_k = C_k$, and $\frac{Z^k+Z_k}{Z^k+B_k} = 0$.
  Theorem~\ref{thm:existence-measurement} then implies that every homology class has
  a harmonic representative. In addition, by
  Remark~\ref{rmk:char-0-uniqueness}, that harmonic representative is unique.
\end{remark}

\section{Computation of Harmonic Representatives}\label{sec:computation}
In this section, we focus on computing harmonic representatives by introducing the 
Moore-Penrose pseudoinverse. We then use the results of
Sections~\ref{sec:existence-and-uniqueness} and \ref{sec:computation} to prove
Theorem~\ref{thm:TFAE-list}, as well as Theorem~\ref{thm:hodge-decomposition}, our generalization of the
discrete Hodge decomposition.

\subsection{Pseudoinverses}
\begin{definition}
	Given an $m\times n$ matrix $A$ with entries in $\bF$, the (Moore-Penrose)
	\textit{pseudoinverse} of $A$, denoted $A^\dagger$, is an $n\times m$ matrix
	satisfying the following:
	\begin{enumerate}
		\item $AA^\dagger A = A^\dagger$.
		\item $A^\dagger A A^\dagger = A^\dagger$.
		\item $AA^\dagger$ is Hermitian.
		\item $A^\dagger A$ is Hermitian.
	\end{enumerate}
\end{definition}


The (Moore-Penrose) pseudoinverse generalizes the usual matrix inverse to
matrices that fail to be invertible. A preferred solution to the system $Ax =
y$ is given by the pseudoinverse: $A^\dagger y$. If $A$ is invertible, then
$A^\dagger = A^{-1}$ and the preferred solution is the unique solution. If $Ax
= y$ has multiple solutions, $A^\dagger y$ is the solution orthogonal to
$\ker(A)$, which is also the solution of minimal norm if $A$ is over a field of
characteristic 0. If $Ax = y$ has no solution, then $A^\dagger y$ is a solution
to the system $Ax = \hat y$ where $\hat y$ is the orthogonal projection of $y$
onto $\im(A)$. Over a field of characteristic 0, this is equivalent to saying
that $A^\dagger$ minimizes the norm of $y-A(A^\dagger y)$.

The following existence and uniqueness theorems are due to Pearl~\cite[Theorem 1]{Pearl1968} and Penrose\footnote{While Penrose was considering matrices with complex entries, his uniqueness proof works over any field.}~\cite[Theorem 1]{Penrose1955}.
\begin{theorem}\label{thm:existence-uniqueness-of-pseudoinverses}
	Let $A$ be a matrix with entries in a field.
	\begin{enumerate}
		\item (Pearl) A pseudoinverse of $A$ exists if and only if $\rank(AA^*) = \rank(A) = \rank(A^*A)$.
		\item (Penrose) If a pseudoinverse of $A$ does exist, then it is unique.
	\end{enumerate}
\end{theorem}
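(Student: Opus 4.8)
The plan is to treat the two assertions separately. I would dispatch Penrose's uniqueness first, since it is a purely formal consequence of the four axioms: given $B$ and $C$ that both satisfy the conditions for $A$, transpose $ABA = A$ and $ACA = A$ to get $A^* = A^*B^*A^* = A^*C^*A^*$, and then compute $B = BAB = B(AB)^* = BB^*A^* = (BB^*A^*)(C^*A^*) = B(C^*A^*) = BAC$, where the fourth equality substitutes $A^* = A^*C^*A^*$ and the last two use $BB^*A^* = B$ together with $C^*A^* = (AC)^* = AC$. The mirror-image chain $C = CAC = (CA)^*C = A^*C^*C = (A^*B^*A^*)C^*C = (BA)^*C = BAC$ uses $A^*C^*C = C$ and $(BA)^* = BA$ in the same way. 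Hence $B = BAC = C$.

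For Pearl's criterion the forward direction is short: if $A^\dagger$ exists, then using that $A^\dagger A$ is Hermitian, $A = AA^\dagger A = A(A^\dagger A)^* = AA^*(A^\dagger)^*$, so $\rank A \le \rank(AA^*) \le \rank A$; and using that $AA^\dagger$ is Hermitian, $A = (AA^\dagger)^*A = (A^\dagger)^*A^*A$, so $\rank A \le \rank(A^*A) \le \rank A$. Hence $\rank(AA^*) = \rank A = \rank(A^*A)$.

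The substance is the converse. Writing $A\colon \bF^n \to \bF^m$ and assuming $\rank A = \rank(AA^*) = \rank(A^*A) = r$, I would first observe that $\ker A \subseteq \ker(A^*A)$ with both of dimension $n-r$, forcing $\ker A = \ker(A^*A)$ (and symmetrically $\ker A^* = \ker(AA^*)$). Consequently, if $Ax \in \ker A^*$ then $A^*Ax = 0$, so $x \in \ker(A^*A) = \ker A$ and $Ax = 0$, giving $\im A \cap \ker A^* = 0$. Since $\ker A^* = (\im A)^\perp$ has complementary dimension in $\bF^m$, this yields the \emph{orthogonal} direct sum $\bF^m = \im A \oplus \ker A^*$, and likewise $\bF^n = \im A^* \oplus \ker A$ with $\ker A = (\im A^*)^\perp$; in particular $A$ restricts to an isomorphism $\bar A\colon \im A^* \to \im A$. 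I would then let $P\colon \bF^m \to \bF^m$ be the projection onto $\im A$ along $(\im A)^\perp$ --- self-adjoint (Hermitian) precisely because it projects along an orthogonal complement --- and set $A^\dagger := \bar A^{-1}\circ P$. A direct check gives $AA^\dagger = P$ and $A^\dagger A = Q$, the projection onto $\im A^*$ along $\ker A$ (Hermitian for the same reason), whence $AA^\dagger A = AQ = A$ and $A^\dagger A A^\dagger = QA^\dagger = A^\dagger$; so $A^\dagger$ satisfies all four conditions.

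I expect the converse to be the only real obstacle, and its crux is recognizing that the hypotheses $\rank(AA^*) = \rank A = \rank(A^*A)$ are exactly what force $\im A$ and $\im A^*$ to be \emph{non-degenerate} subspaces, which is what licenses the orthogonal splittings $\bF^m = \im A \oplus \ker A^*$ and $\bF^n = \im A^* \oplus \ker A$ used to build $A^\dagger$. Over $\R$ or $\C$ those splittings come for free, so the content of Pearl's theorem is an entirely positive-characteristic phenomenon; once the splittings are in hand, the construction and the four verifications are routine, and the uniqueness half presents no difficulty.
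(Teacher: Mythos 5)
The paper does not prove this theorem at all: it is stated with citations to Pearl~\cite{Pearl1968} for the existence criterion and Penrose~\cite{Penrose1955} for uniqueness, plus a footnote observing that Penrose's uniqueness argument transfers verbatim to an arbitrary field. So there is no ``paper proof'' to compare against; what you have done is supply one, and it is correct. Your uniqueness argument is precisely Penrose's classical four-axiom manipulation. For the existence half, your route---deduce $\ker A = \ker(A^*A)$ and $\ker A^* = \ker(AA^*)$ from the rank equalities, conclude $\im A \cap (\im A)^\perp = 0$ and hence the orthogonal splittings $\bF^m = \im A \oplus (\im A)^\perp$, $\bF^n = \im A^* \oplus (\im A^*)^\perp$, then build $A^\dagger = \bar A^{-1}\circ P$ from the self-adjoint projections---is a clean geometric rendition. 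Pearl's original proof goes through a rank factorization $A = FG$ and the closed formula $A^\dagger = G^*(GG^*)^{-1}(F^*F)^{-1}F^*$, with the rank hypotheses exactly guaranteeing invertibility of $GG^*$ and $F^*F$; the two arguments are equivalent in content, but yours makes the ``nondegeneracy of $\im A$ and $\im A^*$'' theme explicit, which is exactly the lens this paper cares about. One small bonus: you have quietly used the correct first Penrose axiom $AA^\dagger A = A$, whereas the paper's Definition has a typo reading $AA^\dagger A = A^\dagger$.
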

The uniqueness result justifies the use of the notation $A^\dagger$ to denote \emph{the} pseudoinverse of $A$.

\begin{remark}
	If $A$ is a matrix over a field of characteristic 0, then $\rank(AA^*) = \rank(A^*A) = \rank(A)$ always holds, so $A^\dagger$ always exists.
\end{remark}

\begin{remark}\label{rmk:full-rank-pseudoinverse}
	If $A$ is full rank, then $A^*A$ is automatically invertible, so by Theorem~\ref{thm:existence-uniqueness-of-pseudoinverses}, $A^\dagger$ exists if and only if $(AA^*)\inv$ exists. If this is the case, then it can be shown that $A^\dagger=A^*(AA^*)\inv$ by directly verifying the pseudoinverse axioms.
\end{remark}

Given a linear map $\phi:V\to W$ between finite dimensional
vector spaces, the pseudoinverse is traditionally only defined with respect to chosen bases
for $V$ and $W$. In the special case that $\phi$ is surjective, however, we
need only specify a preferred basis for $V$ in order for a pseudoinverse of
$\phi$ to be well-defined.


\begin{definition}
  Let $\phi:\bF^n \to W$ be a linear map and $A$ a matrix representing $\phi$ with respect to a basis $B\subseteq W$. If $A^\dagger$ exists, then the pseudoinverse of $\phi$ is defined by
	$\phi^\dagger(w) = A^\dagger w$.
\end{definition}

We show that this definition does not depend on the
choice of basis $B$. Let $B'$ be another ordered basis for $W$. Define
$\alpha_B:\bF^m \to W$ to be the linear isomorphism sending the standard basis
to $B$, and define $\alpha_{B'}$ similarly. Let $A$ and $A'$ be matrices
representing $\phi$ with respect to $B$ and $B'$ respectively, so that  
\[
\begin{tikzcd}
	\bF^n \ar[r,"A"] \ar[d,"A'" '] \ar[dr,"\phi"] & \bF^m \ar[d,"\alpha _B"]\\
	\bF^m \ar[r,"\alpha_{B'}"'] & W
\end{tikzcd}
\]
commutes. We will show that
\begin{equation}
\begin{tikzcd}
	\bF^n & \bF^m \ar[l,"A^\dagger" '] \\
	\bF^m \ar[u,"A'^\dagger"]   & W \ar[l,"\alpha_{B'}\inv"] \ar[u,"\alpha_B\inv"']
\end{tikzcd}
\label{eqn:pseduo_basis}
\end{equation}
commutes as well. The two paths from $W$ to $\bF^n$ in
Diagram~\eqref{eqn:pseduo_basis} represent the two possible definitions of
$\phi^\dagger$, each with respect to a different basis for $W$, so showing 
Diagram~\eqref{eqn:pseduo_basis} commutes will suffice. By
Remark~\ref{rmk:full-rank-pseudoinverse}, $A^\dagger = A^*(AA^*)\inv$, so
\begin{align*}
	A^\dagger \alpha_B \inv
	= & A^*(AA^*)\inv \alpha_B \inv.
\end{align*}
By the first diagram, $A = \alpha_B\inv\alpha_{B'} A'$. Substituting this into the above expression and simplifying yields
$A'^\dagger \alpha_{B'}\inv$, showing that the diagram commutes.

The following result follows immediately from Remark~\ref{rmk:full-rank-pseudoinverse}.

\begin{proposition}\label{prop:pseudoinverse-of-surjective-map-existence-and-form}
	If $\phi:\mathbb{F}^n\to W$ is a surjective linear transformation, then
	$\phi^\dagger$ exists if and only if $\phi\phi^*$ is invertible. In this
	case, $\phi^\dagger = \phi^*(\phi\phi^*)\inv$.
\end{proposition}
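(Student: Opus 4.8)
The plan is to reduce the statement to a matrix computation and then combine Pearl's criterion, Theorem~\ref{thm:existence-uniqueness-of-pseudoinverses}(1), with a direct verification of the defining conditions for the pseudoinverse, exactly in the spirit of Remark~\ref{rmk:full-rank-pseudoinverse}. First I would fix a basis for $W$ and let $A$ be the $m\times n$ matrix representing $\phi$, where $m=\dim W$; by the basis-independence of $\phi^\dagger$ established above, it suffices to prove the claim for this $A$ and then translate back. Surjectivity of $\phi$ means $\rank(A)=m$, so $n\geq m$ and $AA^*$ is an $m\times m$ matrix of rank at most $m$. Hence "$\phi\phi^*$ is invertible'' is literally the same condition as "$\rank(AA^*)=\rank(A)$.''

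For the forward direction, suppose $\phi^\dagger$ exists, so $A^\dagger$ exists; Pearl's criterion then forces $\rank(AA^*)=\rank(A)=m$, and since $AA^*$ is $m\times m$ this says exactly that $AA^*$ is invertible. For the converse, assume $AA^*$ is invertible and set $B=A^*(AA^*)^{-1}$, an $n\times m$ matrix. I would check the four defining conditions for $B=A^\dagger$ directly: $ABA=(AA^*)(AA^*)^{-1}A=A$ and $BAB=A^*(AA^*)^{-1}(AA^*)(AA^*)^{-1}=B$ give the first two; $AB=(AA^*)(AA^*)^{-1}=I_m$ is symmetric, and $(BA)^*=A^*\bigl((AA^*)^{-1}\bigr)^*A=A^*(AA^*)^{-1}A=BA$ since $AA^*$, hence its inverse, is symmetric. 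Thus $A^\dagger=A^*(AA^*)^{-1}$ exists, and unwinding the basis identification gives $\phi^\dagger=\phi^*(\phi\phi^*)^{-1}$. Combining the two implications proves the proposition.

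The one point needing care — and the reason this does not simply reduce to the classical characteristic-$0$ picture — is that Pearl's criterion involves all three ranks $\rank(AA^*)$, $\rank(A)$, $\rank(A^*A)$, and over a field of positive characteristic we may not assume $\rank(A^*A)=\rank(A)$ for free. The plan sidesteps this: for the existence half we never verify the rank condition on $A^*A$ at all, but instead exhibit $A^*(AA^*)^{-1}$ as a genuine pseudoinverse, which witnesses existence by itself; and for the non-existence half we use only the easy implication "$A^\dagger$ exists $\Rightarrow \rank(AA^*)=\rank(A)$'' of Pearl's theorem. (Should one want the missing rank equality a posteriori, it follows from the identity $(A^*A)\bigl(A^*(AA^*)^{-1}\bigr)=A^*$, which forces $\rank(A^*A)\geq\rank(A^*)=m$.) Beyond this the argument is routine bookkeeping, consistent with the claim that it follows immediately from Remark~\ref{rmk:full-rank-pseudoinverse}.
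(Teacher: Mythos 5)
Your proof is correct and follows essentially the same route the paper takes, namely the one indicated in Remark~\ref{rmk:full-rank-pseudoinverse}: use Pearl's rank criterion for existence and directly verify the four Moore--Penrose axioms for $A^*(AA^*)^{-1}$. That said, your closing paragraph identifies a genuine soft spot that the paper glosses over. Remark~\ref{rmk:full-rank-pseudoinverse} asserts that if $A$ is full rank then $A^*A$ is \emph{automatically} invertible, and this is false in positive characteristic: for the full-row-rank matrix $A=(1\ \ 1)$ over $\F{2}$, $A^*A$ is the $2\times 2$ all-ones matrix, of rank $1$. Your argument sidesteps this cleanly, using only the forward implication of Pearl's theorem (``$A^\dagger$ exists $\Rightarrow \rank(AA^*)=\rank(A)$'') for one direction and an explicit witness for the other, and you correctly note that $\rank(A^*A)=m$ can be recovered a posteriori from $(A^*A)\,A^*(AA^*)^{-1}=A^*$. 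One small point: the paper's Definition of the pseudoinverse contains a typo in axiom (1), which should read $AA^\dagger A=A$ rather than $AA^\dagger A=A^\dagger$; your verification $ABA=A$ implicitly checks the correct axiom.
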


%

\subsection{Computing the Harmonic Representative}

In the special case that each homology class has a unique harmonic representative, that representative can be computed using a pseudoinverse.

\begin{lemma}\label{lemma:image-pi-dual}
	Let $\pi:C_k\to C_k/B_k$ be the canonical projection map. Then $\im(\pi^*)=Z^k$.
\end{lemma}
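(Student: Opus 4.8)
The claim is that $\im(\pi^*) = Z^k$, where $\pi \colon C_k \to C_k/B_k$ is the canonical projection and $\pi^*$ is its dual. Here we use the identification of Remark~\ref{rem:identification}, so that $\pi^*$ is viewed as a map into $C_k$ and $Z^k$ is viewed as a subspace of $C_k$. The plan is to compute the image of $\pi^*$ directly from the general linear-algebra fact that, under the identification of a finite-dimensional vector space with its dual via a nondegenerate bilinear form, the image of a dual map is the orthogonal complement of the kernel of the original map. Concretely, for any linear map $\phi \colon V \to W$ between finite-dimensional inner-product (or symmetric-bilinear-form) spaces, one has $\im(\phi^*) = \ker(\phi)^\perp$.

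**Main steps.**
First I would recall that $\ker(\pi) = B_k$, immediately from the definition of the quotient projection. Second, I would invoke the identity $\im(\pi^*) = \ker(\pi)^\perp = B_k^\perp$; this requires knowing that $C_k$ and $C_k/B_k$ are each equipped with nondegenerate symmetric bilinear forms so that the duality identification is available — the form on $C_k$ is the standard one from Remark~\ref{rem:identification}, and one takes any nondegenerate form on the quotient (the argument is independent of which one). Third, I would identify $B_k^\perp$ with $Z^k$ using the computation already recorded in the paper just before Corollary~\ref{cor:uniqueness-condition}: since $B_k = \im(\partial_{k+1})$, we have $B_k^\perp = \im(\partial_{k+1})^\perp = \ker(\partial_{k+1}^*) = Z^k$, where the middle equality is the standard fact that $\im(A)^\perp = \ker(A^*)$. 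Chaining these gives $\im(\pi^*) = Z^k$.

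**Anticipated obstacle.**
The only subtlety is being careful about what "$\pi^*$" means as a concrete map and ensuring the orthogonal-complement identity is being applied in the right spaces. Since $\pi$ goes into the quotient $C_k/B_k$, strictly $\pi^*$ is a map $(C_k/B_k)^* \to C_k^*$, and one must commit to an identification $(C_k/B_k)^* \cong (C_k/B_k)$ to even speak of "$\im(\pi^*) \subseteq C_k$." I would note that $\im(\pi^*)$, as a subspace of $C_k^* \cong C_k$, does not depend on the choice of nondegenerate form on the quotient: $\pi^*$ is injective (as $\pi$ is surjective), and precomposing with a change-of-form isomorphism on $(C_k/B_k)^*$ only reparametrizes the source, leaving the image unchanged. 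Once this bookkeeping is settled, the rest is the routine application of $\im(\phi^*) = \ker(\phi)^\perp$ together with the already-established $B_k^\perp = Z^k$, so I expect the proof to be short.
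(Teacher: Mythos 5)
Your argument is correct and is essentially the same as the paper's: identify $\ker(\pi)=B_k$, take orthogonal complements, and chain $\im(\pi^*)=\ker(\pi)^\perp=B_k^\perp=\im(\partial_{k+1})^\perp=\ker(\partial_{k+1}^*)=Z^k$. The only minor point is that your anticipated obstacle is lighter than you suggest — $\im(\pi^*)$ already lives in $C_k^*$ and needs only the form on $C_k$ to be transported into $C_k$, so no choice of form on the quotient is actually required — but your resolution of it is correct and the rest matches the paper.
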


\begin{proof}
  By definition of $\pi$, we find $B_k = \im(\partial_{k+1}) = \ker(\pi)$. Taking orthogonal complements,
	\begin{equation*}
    \ker(\partial_{k+1}^*) = \im(\partial_{k+1})^\perp = \ker(\pi)^\perp = \im(\pi^*). \qedhere
	\end{equation*}
\end{proof}

\begin{theorem}\label{thm:pseudoinverse-usefulness}
	Suppose the canonical projection $\pi:C_k\to C_k/B_k$ has pseudoinverse $\pi^\dagger$. For any $h \in H_k$, $\pi^\dagger(h)$ is a harmonic representative for $h$.
\end{theorem}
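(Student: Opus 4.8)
The plan is to show that $\pi^\dagger(h)$ lands in $Z_k \cap Z^k$ and that it maps to $h$ under the quotient map, which are exactly the two conditions required of a harmonic representative.

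First I would recall that $\pi\colon C_k \to C_k/B_k$ is surjective, so by Proposition~\ref{prop:pseudoinverse-of-surjective-map-existence-and-form} the existence of $\pi^\dagger$ means $\pi\pi^*$ is invertible and $\pi^\dagger = \pi^*(\pi\pi^*)\inv$. The key structural observation is that $\ker(\pi) = B_k$, which follows directly from the definition of $\pi$, and that, by Lemma~\ref{lemma:image-pi-dual}, $\im(\pi^*) = Z^k$. In particular $\im(\pi^\dagger) \subseteq \im(\pi^*) = Z^k$, so $\pi^\dagger(h)$ is automatically a cocycle. This handles one half of the harmonicity condition essentially for free.

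Next I would verify that $\pi^\dagger(h)$ represents $h$, i.e. that $\pi(\pi^\dagger(h)) = h$. Here I would use pseudoinverse axiom (1) in the form $\pi\pi^\dagger\pi = \pi$ together with surjectivity of $\pi$: given $h \in C_k/B_k$, write $h = \pi(c)$ for some $c \in C_k$, so $\pi(\pi^\dagger(h)) = \pi\pi^\dagger\pi(c) = \pi(c) = h$. (Strictly one should double-check that axiom (1) as stated in the paper, which reads $AA^\dagger A = A^\dagger$, is meant to be $AA^\dagger A = A$; this is the standard convention and the only one that makes the pseudoinverse behave as claimed, and the later use of Remark~\ref{rmk:full-rank-pseudoinverse} is consistent with it.) This shows $\pi^\dagger(h)$ is a cocycle lying in the correct coset of $B_k$; in particular it is a cycle, since it differs from any cycle representative of $h$ by an element of $B_k \subseteq Z_k$. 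Hence $\pi^\dagger(h) \in Z_k \cap Z^k$ and $[\pi^\dagger(h)] = h$, so it is a harmonic representative of $h$.

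I do not expect a serious obstacle here; the main thing to be careful about is the bookkeeping around the identification of $C_k$ with its dual (Remark~\ref{rem:identification}) so that $\pi^*$ can legitimately be viewed as a map into $C_k$ and $\im(\pi^*) = Z^k$ makes sense as a subspace of $C_k$ — this is exactly what Lemma~\ref{lemma:image-pi-dual} provides. The only mildly subtle point is making sure $\pi^\dagger(h)$ is genuinely a \emph{cycle} and not merely a cochain in the right coset; this is immediate once one notes that the coset $h + B_k$, viewed inside $C_k$, consists entirely of cycles because $h \in H_k = Z_k/B_k$ and $B_k \subseteq Z_k$.
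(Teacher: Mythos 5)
Your proposal is correct and takes essentially the same route as the paper: invoke Proposition~\ref{prop:pseudoinverse-of-surjective-map-existence-and-form} for the explicit form $\pi^\dagger = \pi^*(\pi\pi^*)\inv$, use Lemma~\ref{lemma:image-pi-dual} to get $\im(\pi^\dagger) \subseteq \im(\pi^*) = Z^k$ for cocyclicity, and show $\pi\pi^\dagger = \mathrm{id}$ for representativity. The one small divergence is that the paper reads off the right-inverse property directly from the explicit formula ($\pi\pi^\dagger = \pi\pi^*(\pi\pi^*)\inv = \mathrm{id}$), whereas you derive it from the pseudoinverse axiom $\pi\pi^\dagger\pi = \pi$ together with surjectivity; both are immediate and equally valid. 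You are also right that the paper's stated axiom (1), $AA^\dagger A = A^\dagger$, is a typo for $AA^\dagger A = A$.
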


\begin{proof}
  By Proposition~\ref{prop:pseudoinverse-of-surjective-map-existence-and-form},
  $\pi^\dagger = \pi^*(\pi\pi^*)^{-1}$, and in particular this shows that
  $\pi^\dagger$ is a right inverse of $\pi$. Hence $h=\pi\pi^\dagger(h) =
  \left[\pi^\dagger(h)\right]$, showing that $\pi^\dagger(h)$ is a
  representative of $h$.

  In addition, since $\pi^\dagger = \pi^*(\pi\pi^*)^{-1}$, and
  $(\pi\pi^*)^{-1}$ is full rank, we can see $\im(\pi^\dagger) = \im(\pi^*)$,
  and by Lemma~\ref{lemma:image-pi-dual}, $\im(\pi^*) = Z^k$. Therefore
  $\pi^\dagger(h)$ is indeed a cocycle.
\end{proof}

Theorem~\ref{thm:pseudoinverse-usefulness} allows for the computation of
harmonic representatives when $\pi^\dagger$ exists but, as stated in
Theorem~\ref{thm:existence-uniqueness-of-pseudoinverses}, this need not always be the
case.

\begin{lemma}\label{lemma:pseudoinverse-existence-condition}
	The projection $\pi:C_k\to C_k/B_k$ has a pseudoinverse $\pi^\dagger$ if and only if every harmonic representative is unique.
\end{lemma}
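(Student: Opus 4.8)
The plan is to prove Lemma~\ref{lemma:pseudoinverse-existence-condition} by combining the pseudoinverse existence criterion of Theorem~\ref{thm:existence-uniqueness-of-pseudoinverses}(1) with the uniqueness characterization already established in Corollary~\ref{cor:uniqueness-condition}, which says that all harmonic representatives in $C_k$ are unique if and only if $B_k$ (equivalently $Z^k$) is a non-degenerate subspace of $C_k$, i.e.\ $B_k \cap Z^k = 0$. So the whole task reduces to showing that $\pi^\dagger$ exists precisely when $B_k \cap Z^k = 0$.

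First I would record what the pseudoinverse criterion says for the specific map $\pi\colon C_k \to C_k/B_k$. Since $\pi$ is surjective, Proposition~\ref{prop:pseudoinverse-of-surjective-map-existence-and-form} tells us $\pi^\dagger$ exists if and only if $\pi\pi^*$ is invertible. Equivalently, via Theorem~\ref{thm:existence-uniqueness-of-pseudoinverses}(1), we need $\rank(\pi\pi^*) = \rank(\pi) = \rank(\pi^*)$; the second equality is automatic, and since $\pi$ is surjective we have $\rank(\pi) = \dim(C_k/B_k) = \dim(C_k) - \dim B_k$. So the condition is exactly that $\pi\pi^*$, a map $C_k/B_k \to C_k/B_k$, has full rank $\dim(C_k/B_k)$, i.e.\ is invertible.

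Next I would translate invertibility of $\pi\pi^*$ into a statement about $\ker(\pi^*)$ and hence about $B_k \cap Z^k$. Because $\pi$ is surjective, $\pi\pi^*$ fails to be injective exactly when $\ker(\pi^*) \neq 0$: indeed if $\pi^*v \neq 0$ but $\pi\pi^* v = 0$ we get a kernel element, while conversely a nonzero $v \in \ker(\pi^*)$ gives $\pi\pi^*v = 0$ with $v \neq 0$; one must be slightly careful here since $\pi\pi^*$ could in principle kill something not in $\ker \pi^*$, so the clean argument is: $\langle \pi\pi^* v, w\rangle = \langle \pi^* v, \pi^* w\rangle = b(\pi^*v, \pi^* w)$, so $\ker(\pi\pi^*) = \{v : \pi^* v \perp \im(\pi^*)\}$, and since $\pi^*$ is injective (as $\pi$ is onto), $\pi\pi^*$ is invertible iff the form restricted to $\im(\pi^*)$ is non-degenerate. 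By Lemma~\ref{lemma:image-pi-dual}, $\im(\pi^*) = Z^k$, so $\pi\pi^*$ is invertible iff $Z^k$ is a non-degenerate subspace of $C_k$, which is iff $B_k \cap Z^k = 0$ (using $B_k = (Z^k)^\perp$ as noted before Corollary~\ref{cor:uniqueness-condition}).

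Finally I would assemble the chain of equivalences: $\pi^\dagger$ exists $\iff$ $\pi\pi^*$ invertible $\iff$ $Z^k$ non-degenerate $\iff$ $B_k \cap Z^k = 0$ $\iff$ every harmonic representative is unique, the last step being exactly Corollary~\ref{cor:uniqueness-condition}. I expect the main subtlety — not really an obstacle, but the one place to be careful — is the middle step identifying $\ker(\pi\pi^*)$ with the degenerate part of $Z^k$ via the bilinear form, since over a field of positive characteristic one cannot appeal to positive-definiteness and must argue purely through the adjoint relation $b(\pi\pi^* v, w) = b(\pi^* v, \pi^* w)$ together with injectivity of $\pi^*$. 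Everything else is a direct citation of results already in hand.
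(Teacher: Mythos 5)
Your proposal is correct and follows essentially the same route as the paper's proof: reduce existence of $\pi^\dagger$ to invertibility of $\pi\pi^*$ via Proposition~\ref{prop:pseudoinverse-of-surjective-map-existence-and-form}, identify $\im(\pi^*)=Z^k$ via Lemma~\ref{lemma:image-pi-dual}, conclude that $\pi\pi^*$ is invertible iff $B_k\cap Z^k=0$, and finish with Corollary~\ref{cor:uniqueness-condition}. The only cosmetic difference is that you reach $B_k\cap Z^k=0$ by showing $\pi\pi^*$ is invertible iff $Z^k$ is a non-degenerate subspace (via the adjoint relation $b(\pi\pi^*v,w)=b(\pi^*v,\pi^*w)$ and injectivity of $\pi^*$), whereas the paper states directly that $\pi\pi^*$ is invertible iff $\im(\pi^*)\cap\ker(\pi)=0$; these are the same condition since $\ker\pi=B_k=(Z^k)^\perp$.
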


\begin{proof}
	As discussed in Remark~\ref{rmk:full-rank-pseudoinverse}, $\pi^\dagger$ exists if and only if $\pi\pi^*$ is invertible, and this is true if and only if $\im(\pi^*)\cap \ker(\pi) = 0$. Lemma \ref{lemma:image-pi-dual} gives $\im(\pi^*) = Z^k$, and clearly $\ker(\pi) = B_k$. In other words, $\pi^\dagger$ exists exactly when $B_k \cap Z^k = 0$. Corollary \ref{cor:uniqueness-condition} finishes the proof.
\end{proof}


\subsection{Proofs of Theorem A and Theorem B}
\begin{proof}[Proof of Theorem~\ref{thm:TFAE-list}]
  First note that, since $Z_k \cap Z^k$ is the space of harmonic forms,
  (\ref*{it:A-HomologyIsomorphism}) is just an algebraic restatement of
  (\ref*{it:A-ExistenceUniqueness}). Omitting (\ref*{it:A-HomologyIsomorphism})
  from the list, we prove the equivalence of the remaining items in three
  parts.

	\textbf{Part 1: $(\ref*{it:A-ExistenceUniqueness})
	\Rightarrow (\ref*{it:A-Uniqueness})
	\Rightarrow (\ref*{it:A-UniquenessAt0})
	\Rightarrow (\ref*{it:A-TrivialIntersection})
	\Rightarrow (\ref*{it:A-PseudoinverseExistence}) 
	\Rightarrow (\ref*{it:A-ExistenceUniqueness}) $.}

		The implications $(\ref*{it:A-ExistenceUniqueness})
		\Rightarrow (\ref*{it:A-Uniqueness})
		\Rightarrow (\ref*{it:A-UniquenessAt0})$ are immediate.
		To see that (\ref*{it:A-UniquenessAt0}) $\Rightarrow$ (\ref*{it:A-TrivialIntersection}), recall from Proposition~\ref{prop:uniqueness-measurement}
    that the harmonic representatives of any given homology class form a torsor
    over $B_k\cap Z^k$, the set of harmonic representatives of the trivial
    homology class. If 0 is the only such representative, this torsor must be
    over the trivial space, forcing $B_k\cap Z^k = 0$.

		For (\ref*{it:A-TrivialIntersection}) $\Rightarrow$ (\ref*{it:A-PseudoinverseExistence}),
    $\pi^\dagger$ exists when $\pi\pi^*$ is invertible by
    Proposition~\ref{prop:pseudoinverse-of-surjective-map-existence-and-form},
    and since $\pi^*$ is injective, this holds exactly when $\im(\pi^*)\cap
    \ker(\pi) = 0$. Clearly $\ker(\pi) = B_k$, and
    Lemma~\ref{lemma:image-pi-dual} implies $\im(\pi^*) = Z^k$. Therefore
    $\im(\pi^*) \cap \ker(\pi) = B_k \cap Z^k = 0$ and so $\pi^\dagger$ exists.

    Finally, (\ref*{it:A-PseudoinverseExistence}) $\Rightarrow$
    (\ref*{it:A-ExistenceUniqueness}) since, by
    Theorem~\ref{thm:pseudoinverse-usefulness} and
    Lemma~\ref{lemma:pseudoinverse-existence-condition}, $\pi^{\dagger}(h)$ is
    the unique harmonic representative of $h \in H_k(X;\bF)$. 
	
	\textbf{Part 2:} (\ref*{it:A-TrivialIntersection}) and (\ref*{it:A-OrthogonalSum}) are equivalent.

		This follows from a dimension counting argument:
    \[
      \dim(Z^k) = \nullity(\partial^*_{k+1}) = \dim(C_k) -
    \rank(\partial_{k+1}^*) = \dim(C_k) - \rank(\partial_{k+1}) \, ,
  \]
  where $\nullity(\partial^*_{k+1}) = \dim(\ker(\partial_{k+1}^*))$. Thus
		$\dim(B_k)+\dim(Z^k) = \dim(C_k)$.
	
	\textbf{Part 3:} (\ref*{it:A-TrivialIntersection}), (\ref*{it:A-KernelOfComposition}), and (\ref*{it:A-ImageOfComposition}) are equivalent.
	
  Equivalence of (\ref*{it:A-TrivialIntersection}) and (\ref*{it:A-KernelOfComposition}) follows from the general fact that, for linear maps $f$ and $g$, $\ker(f\circ g) = \ker(g)$ if and only if $\ker(f) \cap \im(g) = 0$.
		Choosing $f = \partial_{k+1}^*$ and $g = \partial_{k+1}$ gives the result.

		Lastly, items 
		(\ref*{it:A-KernelOfComposition}) and
		(\ref*{it:A-ImageOfComposition}) are related by taking transposes. Indeed,
		\begin{align*}
		\ker(\partial_{k+1}^\ast \partial_{k+1}) = \ker(\partial_{k+1})
		& \iff
		\ker((\partial_{k+1}^\ast \partial_{k+1})^{**}) = \ker(\partial_{k+1}^{**})
		\\
		& \iff
		\im((\partial_{k+1}^\ast \partial_{k+1})^*)^\perp = \im(\partial_{k+1}^*)^\perp\\
		& \iff
		\im(\partial_{k+1}^*\partial_{k+1}) = \im(\partial_{k+1}^*).
		\end{align*}
		This completes the proof of equivalence.
	
	The final statement about $\pi^\dagger$ follows from Theorem~\ref{thm:pseudoinverse-usefulness}, using statement (\ref*{it:A-PseudoinverseExistence}) for the existence of $\pi^\dagger$ and statement (\ref*{it:A-Uniqueness}) for uniqueness of the representative.
	\end{proof}

  We finish the section with a proof of the Hodge decomposition.

  \begin{proof}[Proof of Theorem~\ref{thm:hodge-decomposition}]  
		Suppose $C_\bullet$ is both homologically and cohomologically harmonic in degree $k$. Applying Theorem~\ref{thm:TFAE-list} item (\ref*{it:A-OrthogonalSum}), we decompose $C_k$ as an inner direct sum $C_k = Z^k \oplus B_k$. These subspaces are orthogonal with respect to the standard symmetric bilinear form, since $B_k^\perp = Z^k$. Then applying the same part of Theorem~\ref{thm:TFAE-list} to $C^\bullet$ at degree $k$, we get $C_k = C^k = Z_k \oplus B^k$, which is again an orthogonal decomposition. Intersecting with $Z^k$ gives
		\begin{align*}
			Z^k = (Z^k \cap Z_k) \oplus (Z^k \cap B^k) = (Z^k \cap Z_k) \oplus B^k.
		\end{align*}
		Substituting into the first decomposition above, we obtain the orthogonal decomposition
		\begin{align*}
			C_k = Z^k \oplus B_k = (Z^k \cap Z_k) \oplus B^k\oplus B_k
		\end{align*}
		as desired.
		
		Conversely, suppose the above decomposition holds.
    Then in particular, $(Z_k \cap Z^k) \cap B_k = B_k \cap Z^k = 0$.
		Theorem~\ref{thm:TFAE-list} item (\ref*{it:A-TrivialIntersection}) implies $C_\bullet$ is homologically harmonic.
		A similar argument shows that $B^k \cap Z_k = \emptyset$, implying that $C_\bullet$ is cohomologically harmonic as well, completing the left hand implication.

		Next we show that $Z_k \cap Z^k = \ker \mathcal L_k$.
    Since $\mathcal L_k = \partial_{k+1}\partial_{k+1}^* + \partial_k^*\partial_k$, clearly $Z_k \cap Z^k \subseteq \ker \mathcal L_k$.
    For the opposite inclusion, suppose $\mathcal L_k x = 0$. Then $\partial_k^*\partial_k x = \partial_{k+1}\partial_{k+1}^*(-x) \in B_k\cap B^k = 0$. Applying item~\ref*{it:A-KernelOfComposition} to both $C_\bullet$ and $C^\bullet$ shows $x\in Z_k \cap Z^k$ as desired.

		We have already shown that such a decomposition is orthogonal, so it only remains to show that the natural maps from $(Z^k \cap Z_k)$ to $H_k$ and $H^k$ are both isomorphisms.
		This follows from applying Theorem~\ref{thm:TFAE-list} item (\ref*{it:A-HomologyIsomorphism}) to both $C_\bullet$ and $C^\bullet$.
  \end{proof}

\section{Harmonic Representatives on CW Complexes over $\F{p}$}\label{sec:rational-methods}
We have explored when a chain complex $C_\bullet$ is homologically harmonic in degree $k$. However, in the case that $C_\bullet = C_\bullet(X;\F{})$, it is not clear how changing the field $\F{}$ affects this property. In this section, we examine the special case where $\F{} = \F{p}$ is a finite field of prime order, examining for which $p$ a fixed CW complex is homologically harmonic over $\F{p}$. Our conditions are based on the integer $\Upsilon$, which is itself defined in terms of the following combinatorial structure.

\begin{definition}[{\cite[Definition 1.10]{catanzaro2017}}]\label{def:spanning-cotree}
	Let $X$ be a finite CW complex with dimension at least $k$. A \emph{$k$-dimensional spanning cotree} of $X$ is a subcomplex $L$ such that
    \begin{itemize}
        \item $X^{(k-1)}\subseteq L \subseteq X^{(k)}$
	    \item The inclusion $i_L : L \to X$ induces rational isomorphisms 
		\[ i_{L*} : H_{k}(L;\mathbb{Q}) \to H_{k}(X;\mathbb{Q}) \mbox{ and } 
		i_{L*}: H_{k-1}(L;\mathbb{Q}) \to H_{k-1}(X;\mathbb{Q}) \, .
	\]
    \end{itemize}
\end{definition}
\begin{remark}\label{rem:row-basis}
    Recall that the rows of the matrix $\partial_{k}:C_k(X) \to C_{k-1}(X)$ correspond to the $k$-cells of $X$. We could equivalently define a $k$-dimensional spanning cotree to be a subcomplex $X^{(k-1)}\subseteq L \subseteq X^{(k)}$ such that the $k$-cells \emph{not} in $L$ form a row basis for the matrix $\partial_{k}$.
\end{remark}
Spanning co-trees of the appropriate dimension always exist~\cite[Lemma 2.1]{catanzaro2017}.
Furthermore, the rational isomorphisms in their definition imply the relative integral homology group $H_k(X,L)$ is always finite for
any $k$-dimensional spanning cotree $L$.

\begin{definition}
    Given a $k$-dimensional spanning cotree $L$, the \emph{weight} of $L$ is
    \[a_L = |H_k(X,L)|.\]
\end{definition}

\begin{definition}\label{def:constant-D}
	Given a finite CW complex $X$ and $k\geq0$, define
    \[\Upsilon = \Upsilon(X,k) = \left( \sum_L a_L^2 \right)\prod_L a_L \, , \]
    where the sum and product are each over all $k$-dimensional spanning cotrees of $X$.
\end{definition}

\begin{definition}
    Let $\Z_{(p)} \subseteq \mathbb{Q}$ be the subring of rational numbers with denominators not divisible by $p$. Define $r_p:\Z_{(p)} \to \F p$ to be the ring map $r_p(a/b) = ab^{-1}\mod p$. Given a matrix $A$ with entries in $\Z_{(p)}$, define $R_pA$ to be the matrix obtained by applying $r_p$ entrywise to $A$.
\end{definition}



\begin{lemma}
    Let $p$ be a prime not dividing any denominators of the entries of $\pi^\dagger\pi$, and such that $H_k(X)$ has no $p$-torsion. Then the matrix $R_p(\pi^\dagger\pi)$ is the orthogonal projection map of $C_k(X;\mathbb{F}_p)$ onto $Z^k(X;\mathbb{F}_p)$ with respect to the standard bilinear form $b_k$ on $C_k(X;\mathbb{F}_p)$. That is,
    \begin{itemize}
        \item $R_p(\pi^\dagger\pi)$ is self-adjoint under $b_k$.
        \item $R_p(\pi^\dagger\pi)$ fixes its image.
        \item $\im(R_p(\pi^\dagger\pi)) = Z^k$.
    \end{itemize}
\end{lemma}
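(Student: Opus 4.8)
The idea is to show that the rational matrix $M := \pi^\dagger \pi$ is already the orthogonal projection of $C_k(X;\mathbb{Q})$ onto $Z^k(X;\mathbb{Q})$ — this should follow quickly from the material already developed — and then argue that this property is preserved under the reduction $R_p$, provided $p$ avoids the denominators of $M$ and the $p$-torsion of $H_k(X)$. Over $\mathbb{Q}$, Theorem~\ref{thm:TFAE-list} applies (characteristic 0), so $\pi^\dagger$ exists; by Proposition~\ref{prop:pseudoinverse-of-surjective-map-existence-and-form}, $\pi^\dagger = \pi^*(\pi\pi^*)^{-1}$, hence $M = \pi^\dagger\pi = \pi^*(\pi\pi^*)^{-1}\pi$. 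One checks directly that $M$ is idempotent ($M^2 = \pi^*(\pi\pi^*)^{-1}(\pi\pi^*)(\pi\pi^*)^{-1}\pi = M$) and self-adjoint (it is visibly of the form $\pi^* N \pi$ with $N = (\pi\pi^*)^{-1}$ symmetric). Its image is $\im(\pi^*) = Z^k$ by Lemma~\ref{lemma:image-pi-dual}, and idempotents fixing their image means $M$ acts as the identity on $Z^k$; combined with self-adjointness, $M$ is the $b_k$-orthogonal projection onto $Z^k$ over $\mathbb{Q}$.

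\textbf{Descent to $\F p$.} The three bulleted claims over $\F p$ are all "polynomial-identity" statements that survive $R_p$ because $r_p$ is a ring homomorphism. Self-adjointness of $M$ means $M^\top = M$ as matrices over $\Z_{(p)}$ (the form $b_k$ has the identity Gram matrix in the standard basis), and $r_p$ commutes with transposition, so $R_p(M)^\top = R_p(M)$. Likewise $M^2 = M$ over $\Z_{(p)}$ implies $R_p(M)^2 = R_p(M)$, so $R_p(M)$ is idempotent and hence fixes its own image. The only subtle point is the third bullet, $\im(R_p(M)) = Z^k(X;\F p)$: reduction mod $p$ does not in general commute with taking images/kernels of integer matrices. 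Here is where the no-$p$-torsion hypothesis enters.

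\textbf{The main obstacle.} Identifying $\im(R_p(M))$ with $Z^k(X;\F p)$ is the heart of the argument. I would proceed as follows. Since $R_p(M)$ is idempotent, $\im(R_p(M)) = \ker(I - R_p(M))$, and $\dim\im(R_p(M)) = \rank(R_p(M)) = \rank(M \bmod p)$. One inclusion is cheap: every column of $M$ lies in $\im(\pi^*) = Z^k$ over $\Q$, so reducing an integral equation expressing columns of $M$ as combinations of coboundary generators $\partial_{k+1}^*$ shows $\im(R_p(M)) \subseteq Z^k(X;\F p)$. For equality it suffices to match dimensions, i.e. to show $\rank(M \bmod p) = \dim Z^k(X;\F p) = |X_k| - \rank(\partial_{k+1}; \F p)$. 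Over $\Q$ we have $\rank M = \dim Z^k(X;\Q) = |X_k| - \rank(\partial_{k+1};\Q)$, so the claim reduces to $\rank(\partial_{k+1};\F p) = \rank(\partial_{k+1};\Q)$ together with $\rank(M\bmod p) = \rank(M)$. The latter follows since $M$ is idempotent over $\Z_{(p)}$ (its rank equals its trace, and the trace reduces correctly mod $p$ — or more directly, an idempotent integer-localized matrix has the same rank before and after reduction because $\coker$ is free). The former — that $\partial_{k+1}$ does not drop rank mod $p$ — is exactly where $H_k(X)$ having no $p$-torsion is used: the Smith normal form of $\partial_{k+1}$ has elementary divisors that are the orders of the torsion in $\coker(\partial_{k+1}) $, and $H_k(X) = \ker\partial_k/\im\partial_{k+1}$ has no $p$-torsion iff none of these divisors is divisible by $p$ (one must also note $\ker\partial_k$ is a direct summand of $C_k(X;\Z)$, so no $p$-torsion is introduced at that stage), hence $\partial_{k+1}$ has full rank mod $p$. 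Assembling these gives $\im(R_p(M)) = Z^k(X;\F p)$, completing the proof; I expect the bookkeeping around Smith normal form and the precise torsion/rank correspondence to be the part requiring the most care.
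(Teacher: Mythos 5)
Your proof is correct, and for the crux — showing $\im R_p(\pi^\dagger\pi)$ is all of $Z^k(X;\mathbb{F}_p)$, not merely contained in it — you take a genuinely different route from the paper. The paper's proof produces, for every $z\in Z^k(X;\mathbb{F}_p)$, an integral cocycle lift $z'\in Z^k(X;\mathbb{Z}[\Upsilon^{-1}])$ by showing that the snake-lemma connecting map $Z^k(X;\mathbb{F}_p)\to C^{k+1}(X;\mathbb{Z}[\Upsilon^{-1}])/B^{k+1}(X;\mathbb{Z}[\Upsilon^{-1}])$ vanishes, which is where the no-$p$-torsion hypothesis and the universal coefficient theorem enter; it then computes $R_p(\pi^\dagger\pi)(z)=r_{p*}(\pi^\dagger\pi(z'))=r_{p*}(z')=z$. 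You instead exploit that $M:=\pi^\dagger\pi$ is idempotent over $\mathbb{Z}_{(p)}$ — a property the paper's proof never invokes — so that $R_p(M)$ is idempotent (giving the ``fixes its image'' bullet for free) and then match dimensions via the Smith normal form of $\partial_{k+1}$, using that $\coker(\partial_{k+1};\mathbb{Z})$ and $H_k(X;\mathbb{Z})$ have the same torsion. Both proofs use no-$p$-torsion at the same conceptual spot; yours is cleaner linear algebra and sidesteps the connecting-homomorphism bookkeeping, while the paper's directly exhibits a cocycle lift, making the subsequent corollary's reliance on $R_p(\pi^\dagger\pi)(z)=z$ for cocycles $z$ a bit more transparent. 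One caution: your first (parenthetical) justification for $\rank(R_p(M))=\rank(M)$ via the trace only controls the rank modulo $p$ and does not suffice by itself; your second justification — that $\mathbb{Z}_{(p)}^n=\im M\oplus\ker M$ with both summands free over the local PID $\mathbb{Z}_{(p)}$, so rank is preserved under $-\otimes\mathbb{F}_p$ — is the one that actually carries the argument.
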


\begin{proof}

    First, it is clear that since $\pi^\dagger\pi$ is a symmetric matrix, so is $R_p(\pi^\dagger \pi)$. A symmetric matrix is self-adjoint under the standard bilinear form $b_k$ if and only if it is symmetric, so $R_p(\pi^\dagger \pi)$ is indeed self-adjoint.

    We show that $\im(R_p(\pi^\dagger\pi)) \subseteq Z^k(X;\mathbb{F}_p)$. If we let $c\in C_k(X;\mathbb{F}_p)$ and pick $c'\in C_k(X;\mathbb{Q})$ with $r_{p*}(c')=c$, then we have
    \begin{align*}
        R_p(\pi^\dagger\pi)(c) = r_{p*}(\pi^\dagger\pi(c')).
    \end{align*}
    Since $\pi^\dagger\pi(c')$ is always a cocycle, and $r_{p*}$ is a chain map, we get that $r_{p*}(\pi^\dagger\pi(c')) \in Z^k(X;\mathbb{F}_p)$.

    Finally, we show $R_p(\pi^\dagger\pi)$ fixes every $z\in Z^k(X;\mathbb{F}_p)$. We claim that there is a $z'\in Z^k(X;\mathbb{Z}[\Upsilon^{-1}])$ such that $r_{p*}(z') = z$. To see this, consider the short exact sequence of groups
    \[0 \to \Z[\Upsilon^{-1}] \xrightarrow{\cdot p} \Z[\Upsilon^{-1}] \xrightarrow{r_p} \mathbb{F}_p \to 0 \, ,\]
    and the induced short exact sequence of cochain complexes
    \[0 \to C^k(X;\Z[\Upsilon^{-1}]) \xrightarrow{\cdot p} C^k(X;\Z[\Upsilon^{-1}]) \xrightarrow{r_{p*}} C^k(X;\mathbb{F}_p) \to 0.\]
    The snake lemma gives us the exact sequence
    \[Z^k(X;\Z[\Upsilon^{-1}]) \xrightarrow{r_{p*}} Z^k(X;\mathbb{F}_p) \xrightarrow{\delta} C^{k+1}(X;\Z[\Upsilon^{-1}])/B^{k+1}(X;\Z[\Upsilon^{-1}]).\]
    To prove that our desired $z'$ exists, by exactness it suffices to show
    that $\delta = 0$. Indeed, $Z^k(X;\mathbb{F}_p)$ consists entirely of
    $p$-torsion, so it further suffices to show that
    $C^{k+1}(X;\Z[\Upsilon^{-1}])/B^{k+1}(X;\Z[\Upsilon^{-1}])$ has no
    $p$-torsion. A representative of a $p$-torsion element would be a chain
    $x\in C^{k+1}(X;\Z[\Upsilon^{-1}])$ with $px = \partial^* y$ for some $y$.
    Then $px$ is a cocycle, and hence so is $x$. We now have that $x$
    represents a $p$-torsion class of $H^{k+1}(X;\Z[\Upsilon^{-1}])$. By an application of the universal coefficient theorem, since $p\nmid \Upsilon$, the
    $p$-torsion of $H^{k+1}(X;\Z[\Upsilon^{-1}])$ is isomorphic to the $p$-torsion of
    $H_k(X)$, and we have assumed that the only such homology class is trivial.
    Therefore $\delta = 0$, and we can pick a cocycle $z'$ with $r_{p*}(z') =
    z$.

    \begin{align*}
        R_p(\pi^\dagger\pi)(z) = r_{p*}(\pi^\dagger\pi(z')) = r_{p*}(z') = z,
    \end{align*}
    which completes the proof.
\end{proof}

\begin{corollary}\label{cor:X-harmonic-projection-denominators}
    If $p$ does not divide the denominators of the entries of the matrix $\pi^\dagger \pi$ and $H_k(X)$ has no $p$-torsion, then $C_k(X;\mathbb{F}_p) = Z^k(X;\mathbb{F}_p) + B_k(X;\mathbb{F}_p)$, and $X$ is homologically harmonic in degree $k$ over $\mathbb{F}_p$.
\end{corollary}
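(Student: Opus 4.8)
The plan is to deduce this immediately from the preceding Lemma, which does the real work: under the stated hypotheses that Lemma identifies $P := R_p(\pi^\dagger\pi)$ as the $b_k$-orthogonal projection of $C_k(X;\F{p})$ onto $Z^k(X;\F{p})$. Concretely, we are handed that $P$ is self-adjoint, that $P$ fixes its image and hence is idempotent ($P^2 = P$), and that $\im P = Z^k(X;\F{p})$.

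First I would identify $\ker P$ with $B_k(X;\F{p})$. One inclusion uses self-adjointness directly: if $Px = 0$, then $b_k(x, Py) = b_k(Px, y) = 0$ for all $y$, so $x$ is orthogonal to $\im P = Z^k$, i.e. $x \in (Z^k)^\perp = B_k$ (the standard identity recalled before Corollary~\ref{cor:uniqueness-condition}). The reverse inclusion uses self-adjointness together with nondegeneracy of $b_k$: if $x \in B_k = (Z^k)^\perp$, then $b_k(Px, y) = b_k(x, Py) = 0$ for all $y$, whence $Px \in C_k^\perp = 0$. Thus $\ker P = B_k(X;\F{p})$.

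Since $P$ is idempotent, $C_k(X;\F{p}) = \im P \oplus \ker P$, and by the previous paragraph this reads $C_k(X;\F{p}) = Z^k(X;\F{p}) \oplus B_k(X;\F{p})$. In particular $C_k(X;\F{p}) = Z^k(X;\F{p}) + B_k(X;\F{p})$, which is the first assertion, and the displayed direct-sum decomposition is precisely statement~(\ref{it:A-OrthogonalSum}) of Theorem~\ref{thm:TFAE-list} applied to the cellular chain complex $C_\bullet(X;\F{p})$ in degree $k$. By that theorem, $X$ is homologically harmonic in degree $k$ over $\F{p}$, which would complete the proof.

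I do not expect a genuine obstacle, as the substance is contained in the Lemma; the only small points to watch are that ``$P$ fixes its image'' genuinely yields $P^2 = P$, and that the kernel of this orthogonal projection is \emph{exactly} $B_k$ rather than something larger --- both handled above using self-adjointness and nondegeneracy of the standard bilinear form. (One could alternatively bypass the explicit computation of $\ker P$ by noting $\ker P \subseteq (Z^k)^\perp = B_k$ gives $C_k = \im P + \ker P \subseteq Z^k + B_k \subseteq C_k$, and then upgrading the sum to a direct sum via the dimension identity $\dim Z^k + \dim B_k = \dim C_k$ used in Part~2 of the proof of Theorem~\ref{thm:TFAE-list}.)
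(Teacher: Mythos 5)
Your proof is correct and follows essentially the same route as the paper's: both hinge on the lemma's three conclusions (self-adjointness, idempotence from ``fixes its image,'' and $\im P = Z^k$) together with the identity $B_k = (Z^k)^\perp$. The paper decomposes an arbitrary chain $c = (c - Pc) + Pc$ and verifies directly that $c - Pc$ is orthogonal to every cocycle, which is just the $\ker P \subseteq B_k$ inclusion you state; you additionally prove $B_k \subseteq \ker P$ and package everything as the direct-sum splitting $C_k = \im P \oplus \ker P$ coming from idempotence, which is a slightly more structural phrasing of the same argument.
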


\begin{proof}
    Pick any chain $c\in C_k(X;\mathbb{F}_p)$. Then
    \begin{align*}
        c = (c- R_p(\pi^\dagger\pi)(c)) + R_p(\pi^\dagger\pi)(c).
    \end{align*}
    By the previous result, $R_p(\pi^\dagger\pi)(c)$ is a cocycle. To show that $c- R_p(\pi^\dagger\pi)(c)$ is a boundary, we write
    \begin{align*}
        R_p(\pi^\dagger\pi)(c- R_p(\pi^\dagger\pi)(c))
        = R_p(\pi^\dagger\pi)(c) - R_p(\pi^\dagger\pi)\circ R_p(\pi^\dagger\pi)(c).
    \end{align*}
    Again by the previous result, $R_p(\pi^\dagger\pi)$ fixes its image, so the above expression is 0. Then $c- R_p(\pi^\dagger\pi)(c)$ is a boundary, since it is orthogonal to every cocycle $z$:
    \begin{align*}
        b_k(z,c- R_p(\pi^\dagger\pi)(c)) 
        & =
        b_k(z,c)- b_k(z, R_p(\pi^\dagger\pi)(c))
        \\ &
        = b_k(z,c)- b_k(R_p(\pi^\dagger\pi)(z), c)
        \\ &
        = b_k(z,c)- b_k(z, c)
        = 0.
    \end{align*}
    Hence $C_k(X;\mathbb{F}_p) = Z^k(X;\mathbb{F}_p) + B_k(X;\mathbb{F}_p)$. By Theorem~\ref{thm:TFAE-list}, it follows that $X$ is homologically harmonic in degree $k$ over $\mathbb{F}_p$.
\end{proof}

The conditions of Corollary~\ref{cor:X-harmonic-projection-denominators} depend on the denominators of $\pi^\dagger\pi$. The following proposition gives a combinatorial description of the denominators that can appear.

\begin{theorem}\label{thm:projection-entries}
    The entries of the matrix $\pi^\dagger\pi$ lie in $\Z[\Upsilon^{-1}]$.
\end{theorem}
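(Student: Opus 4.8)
Here $\pi\colon C_k(X;\mathbb{Q})\to C_k(X;\mathbb{Q})/B_k(X;\mathbb{Q})$ denotes the rational quotient map; since we work over $\mathbb{Q}$ the pseudoinverse $\pi^\dagger$ exists, and by Proposition~\ref{prop:pseudoinverse-of-surjective-map-existence-and-form} we have $\pi^\dagger=\pi^*(\pi\pi^*)^{-1}$, so that $\pi^\dagger\pi$ is the matrix, in the cellular basis of $C_k$, of the orthogonal projection of $C_k$ onto $(\ker\pi)^\perp=B_k^\perp=Z^k$. The plan is to bound the denominators of this projection via Cramer's rule over a well-chosen integral basis and then identify the relevant minors with the weights $a_L$ through the Cauchy--Binet formula. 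Since the pseudoinverse of a surjective map does not depend on the basis chosen for its target, I would compute $\pi^\dagger\pi$ with respect to a $\mathbb{Z}$-basis $\mathcal B$ of the finitely generated free group $\pi\bigl(C_k(X;\mathbb{Z})\bigr)=C_k(X;\mathbb{Z})/\widehat B_k$, where $\widehat B_k=B_k(X;\mathbb{Q})\cap C_k(X;\mathbb{Z})$ is the saturation of $\im\partial_{k+1}$ inside $C_k(X;\mathbb{Z})$; note that $\mathcal B$ is simultaneously a $\mathbb{Q}$-basis of $C_k/B_k$. Letting $P$ be the matrix of $\pi$ with respect to the cellular basis of $C_k$ and the basis $\mathcal B$, the matrix $P$ is integral and has full row rank, so Proposition~\ref{prop:pseudoinverse-of-surjective-map-existence-and-form} gives
\[
  \pi^\dagger\pi \;=\; P^{*}(PP^{*})^{-1}P \;=\; \frac{1}{\det(PP^{*})}\;P^{*}\,\mathrm{adj}(PP^{*})\,P ,
\]
whose numerator is an integer matrix. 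Hence the common denominator of $\pi^\dagger\pi$ divides the positive integer $\det(PP^{*})$, and it suffices to show that $\det(PP^{*})$ divides $\Upsilon$.

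Next I would expand $\det(PP^{*})=\sum_{U}\det(P_U)^2$ by the Cauchy--Binet formula, where $U$ runs over the $\dim(C_k/B_k)$-element subsets of the set of $k$-cells and $P_U$ is the corresponding maximal square submatrix of columns of $P$. A term $\det(P_U)^2$ is nonzero exactly when the columns of $P$ indexed by $U$ are linearly independent, equivalently when their span meets $\ker\pi=B_k$ only in $0$; by the description of $k$-dimensional spanning cotrees in Remark~\ref{rem:row-basis} this happens precisely when $U=L_k$ is the set of $k$-cells of a $k$-dimensional spanning cotree $L$. For such an $L$ the submatrix $P_{L_k}$ represents the composite $C_k(L;\mathbb{Z})\hookrightarrow C_k(X;\mathbb{Z})\xrightarrow{\ \pi\ }C_k(X;\mathbb{Z})/\widehat B_k$, which is injective with finite cokernel, so $|\det(P_{L_k})|$ equals $\hat a_L:=\bigl[\,C_k(X;\mathbb{Z}):C_k(L;\mathbb{Z})+\widehat B_k\,\bigr]$. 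Therefore $\det(PP^{*})=\sum_{L}\hat a_L^{2}$, the sum taken over all $k$-dimensional spanning cotrees of $X$.

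Finally I would compare $\hat a_L$ with the actual weight $a_L=|H_k(X,L)|=\bigl[\,C_k(X;\mathbb{Z}):C_k(L;\mathbb{Z})+\im\partial_{k+1}\,\bigr]$. Since $C_k(L;\mathbb{Q})$ is a vector-space complement of $B_k(X;\mathbb{Q})$ (because $L$ is a cotree), one has $C_k(L;\mathbb{Z})\cap\widehat B_k=0$, from which $\widehat B_k\cap\bigl(C_k(L;\mathbb{Z})+\im\partial_{k+1}\bigr)=\im\partial_{k+1}$; the second isomorphism theorem then yields $a_L/\hat a_L=\bigl[\,\widehat B_k:\im\partial_{k+1}\,\bigr]=:t$, a quantity independent of $L$. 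Consequently $\det(PP^{*})=\sum_L\hat a_L^{2}=t^{-2}\sum_L a_L^{2}$, and since the left-hand side is a positive integer, $\det(PP^{*})$ divides $\sum_L a_L^{2}$, which in turn divides $\Upsilon=\bigl(\sum_L a_L^{2}\bigr)\prod_L a_L$. Hence every entry of $\pi^\dagger\pi$ lies in $\mathbb{Z}[\Upsilon^{-1}]$. The step I expect to require the most care is the middle one: showing that the nonvanishing Cauchy--Binet terms are indexed precisely by the spanning cotrees, and that the corresponding minors compute the saturated weights $\hat a_L$. This is where the structure theory of spanning cotrees from \cite{catanzaro2017} does the real work; the surrounding arguments are routine linear algebra and lattice-index bookkeeping.
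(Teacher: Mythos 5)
Your proof is correct, and it takes a genuinely more self-contained route than the paper's. The paper works by invoking the explicit Ben-Tal--Teboulle formula $\pi^\dagger = \frac{1}{\nabla}\sum_S t_S i_S(\pi_S)^{-1}$ (with $\nabla = \sum_S \det(\pi_S)^2$) borrowed from \cite{ben-tal_geometric_1990}, together with the correspondence from \cite{catanzaro2017} identifying $\det(\pi_S)^2$ with $a_L^2$; this gives the bound $\Z[\Upsilon^{-1}]$ fairly directly but relies on two imported black boxes and is imprecise about which integral normalization of $\pi$ is being used. You instead stay entirely inside the paper's own toolkit, using only Proposition~\ref{prop:pseudoinverse-of-surjective-map-existence-and-form} to write $\pi^\dagger\pi = P^*(PP^*)^{-1}P$, then running Cauchy--Binet on $\det(PP^*)$ by hand. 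Two things your argument buys: (i) you make explicit that the relevant integral basis is one for $C_k(X;\Z)/\widehat{B}_k$ (the saturation), and you cleanly account for the discrepancy between the resulting minors $\hat a_L$ and the weights $a_L$ via the index $t = [\widehat{B}_k : \im\partial_{k+1}]$, which is exactly $|\Tor H_k(X;\Z)|$; and (ii) you obtain the sharper statement that the denominators divide $\sum_L a_L^2$ alone, not the full $\Upsilon = (\sum_L a_L^2)\prod_L a_L$. The identification of nonvanishing Cauchy--Binet terms with spanning cotrees is right, though be aware that you cite Remark~\ref{rem:row-basis} for it, and that Remark as printed refers to row bases of $\partial_k$ (whose rank is $\dim B_{k-1}$), when the relevant matrix for your argument is the one with row space $B_k$, i.e. $\partial_{k+1}$; your actual reasoning (a subset $U$ of $k$-cells indexes a nonzero minor of $\pi$ iff $\mathrm{span}(U) \oplus B_k = C_k$, which is exactly the homological cotree condition) is the correct one and does not depend on the precise wording of that Remark.
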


\begin{proof}
  By clearing denominators
  (or choosing an appropriate basis for $C_k(X;\mathbb{Q})/B_k(X;\mathbb{Q})$),
  we may assume the entries of $\pi$ are integers. 
  Following~\cite[Thm A]{catanzaro2017} as adapted from the general construction
  of~\cite[Thm 2.1]{ben-tal_geometric_1990},
  we can give an explicit construction of $\pi^{\dagger}$. Let $m$ be the rank
  of $C_k(X;\mathbb{Q})/B_k(X;\mathbb{Q})$ and for a subset $S \subset \{1, 2,
  \ldots, |X_k|\}$ of cardinality $m$, let $\pi_S$ denote the
  $m \times m$ submatrix of $\pi$ whose rows correspond to indices in $S$. We restrict
  our focus to only those $S$ so that $\pi_S$ is invertible. Denote the inclusion
  corresponding to those rows by
  $i_S: \mathbb{Z}^m \to C_k(X)$, set $t_S = \det(\pi_S)^2$, 
  and finally set $\nabla = \sum_S t_S$. Then 
  \[
    \pi^{\dagger} = \tfrac{1}{\nabla} \sum_S t_S i_S(\pi_S)^{-1} \, .
  \]
  Under the correspondence described in~\cite[Thm A, Rem 2.7]{catanzaro2017}, each
  such $S$ corresponds to a spanning cotree $L$ and furthermore, $t_S = a_L^2$. 
  The inverse $i_S(\pi_S)^{-1}$ requires the invertibility of $a_L$ and thus
  by definition of $\Upsilon$, $\pi^{\dagger}$ has coefficients in $\Z[\Upsilon^{-1}]$.
  Hence the product $\pi^{\dagger}\pi$ has coefficients in $\Z[\Upsilon^{-1}]$.
\end{proof}

\section{Harmonic Representatives on Orientable Surfaces}\label{sec:orientable-surfaces}
A given CW complex $X$ may have many spanning cotrees of a given dimension, since the corresponding boundary map may have many row bases.
This can make $\Upsilon$ difficult to compute directly.
In the case that $X=M$ is a CW structure on an orientable surface, the formula for $\Upsilon$ can be simplified, making its computation simpler.
As we will show, for any spanning cotree $L$ of $M$, the relative homology group $H_1(M,L)$ is trivial,
in which case the formula for $\Upsilon$ simplifies to just the number of spanning cotrees. We take the convention that a surface is connected.

\subsection{The Dual Cell Structure}\mbox{}
Given a CW decomposition $M$ of an orientable surface, there are multiple
ways of constructing the dual complex $M^*$~\cite{hatcher_algebraic_2002}. 
A careful construction of the dual cell structure is beyond the scope of this paper. The salient points we need are the following.
\begin{itemize}
    \item There is a canonical bijection between the $k$-cells of $M$ and the $(2-k)$-cells of $M^*$ for $0\leq k \leq 2$. This lets us identify $C_k(M)$ and $C_{2-k}(M^*)$.
    \item Under the above identifications, $\partial_1^{M^*}:C_1(M^*) \to C_0(M^*)$ is the transpose of $\partial_2^M:C_2(M)\to C_1(M)$, and similarly $\partial_2^{M^*}:C_2(M^*) \to C_1(M^*)$ is the transpose of $\partial_1^M:C_1(M)\to C_0(M)$.
\end{itemize}

\subsection{Connections between Spanning Trees and Cotrees}

In Remark~\ref{rem:row-basis} we note that a spanning cotree can be defined as corresponding to the complement of a row basis for the appropriate boundary matrix. We can similarly define a $k$-dimensional spanning tree $T$ to be a subcomplex $X^{(k-1)}\subseteq T \subseteq X^{(k)}$ such that the $k$-cells of $T$ correspond to a column basis for $\partial_k$ (see~\cite{catanzaro_kirchhoffs_2015} for a homological definition). When $X$ is a connected graph and $k=1$, this definition aligns with the usual graph-theoretic notion of a spanning tree. When it is understood that $T$ is a $k$-dimensional spanning tree, we often identify $T$ with its set of $k$-cells $T_k\subset X_k$.

Given a subcomplex $X^{(k-1)}\subseteq L\subseteq X^{(k)}$ such as a spanning tree or cotree, we use $L^\perp$ to denote the subcomplex $X^{(k-1)}\subseteq L^\perp\subseteq X^{(k)}$ whose $k$-cells are $X_k \setminus L_k$. If we identify $L$ with its set of $k$-cells, then $L^\perp$ is simply the complement of $L$. In the special case that $L$ is a 1-dimensional spanning cotree of a CW structure $M$ on an orientable manifold, $L^\perp$ can be thought of as a 1-dimensional spanning tree of the dual cell structure $M^\star$. This is because if $L$ is a spanning cotree, then its 1-cells correspond to the complement of a row basis for $\partial^M_2$. Hence the 1-cells of $L^\perp$ correspond to a column basis of $(\partial^M_2)^*=\partial^{M^*}_1$.

\begin{definition}
    Given a $k$-dimensional spanning tree $T$, its \emph{weight} is defined to be
    \[\theta_T = |H_{k-1}(T)_t|.\]
\end{definition}

The following result can simplify calculations of weights of spanning trees and
cotrees. Let $X_{k,k-2}:=X^{(k)}/X^{(k-2)}$, the $k$-skeleton of $X$ with the
$(k-2)$-skeleton collapsed to a point.
\begin{proposition}[{\cite[Lemma 4.2]{catanzaro2017}}]\label{prop:simplifying-weights}
    For any spanning tree $T$ or spanning cotree $L$,
    \[\theta_{T} = \theta_{T_{k,k-2}} \qquad a_L = a_{L_{k,k-2}}.\]
\end{proposition}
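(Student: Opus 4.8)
\emph{Strategy.} The plan is to exploit that the operation $(\cdot)_{k,k-2}$ — collapsing the $(k-2)$-skeleton to a point — leaves the cellular chain modules in degrees $k-1$ and $k$, together with the boundary $\partial_k$ between them, untouched, while turning every differential out of degree $k-1$ or lower into the zero map. I would show that the torsion of $H_{k-1}$ of a spanning tree changes only by a free direct summand (which contributes nothing to the torsion), and that $H_k(X,L)$ for a spanning cotree is unchanged outright. Throughout I would use the standard identifications $\widetilde H_\bullet(Y/A)\cong H_\bullet(Y,A)$ for a CW pair and $(X/A)/(B/A)=X/B$ for a CW triple $A\subseteq B\subseteq X$. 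When $k\le 1$, $X^{(k-2)}=\emptyset$ and $(\cdot)_{k,k-2}$ is the identity, so there is nothing to prove; hence assume $k\ge 2$, so $k-1\ge 1$ and reduced and unreduced homology agree in degree $k-1$.

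\emph{Cotree case.} Because $L$ contains $X^{(k-1)}$, the relative cellular chain complex $C_\bullet(X,L)$ vanishes below degree $k$, so
\[
  H_k(X,L)=\coker\!\big(\partial_{k+1}\colon C_{k+1}(X)\to C_k(X)/C_k(L)\big),
\]
an expression built only from $\partial_{k+1}$ and degree-$k$ data — none of which is altered by collapsing $X^{(k-2)}$. Hence $a_{L_{k,k-2}}=a_L$; equivalently, applying the excision/quotient isomorphism to the triple $X^{(k-2)}\subseteq L\subseteq X$ gives $H_k(X,L)\cong\widetilde H_k(X/L)\cong\widetilde H_k\big((X/X^{(k-2)})/(L/X^{(k-2)})\big)\cong H_k(X_{k,k-2},L_{k,k-2})$. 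A routine check with the long exact sequences of the pairs $(X,X^{(k-2)})$ and $(L,X^{(k-2)})$ also shows $L_{k,k-2}$ is again a spanning cotree of $X_{k,k-2}$, so its weight is defined.

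\emph{Tree case.} Here $T$ has dimension at most $k$ and contains $X^{(k-1)}$, so the reduced cellular chain complex of $T_{k,k-2}=T/X^{(k-2)}$ is $C_k(T)\xrightarrow{\partial_k}C_{k-1}(X)\to 0$ in degrees $k$ and $k-1$ (the differential out of $C_{k-1}(X)$ is now zero). Thus $H_{k-1}(T_{k,k-2})=C_{k-1}(X)/\partial_k(C_k(T))$, while $H_{k-1}(T)=Z_{k-1}(X)/\partial_k(C_k(T))$ with $Z_{k-1}(X)=\ker\partial_{k-1}$. From the inclusions $\partial_k(C_k(T))\subseteq Z_{k-1}(X)\subseteq C_{k-1}(X)$ I obtain a short exact sequence
\[
  0\longrightarrow H_{k-1}(T)\longrightarrow H_{k-1}(T_{k,k-2})\longrightarrow C_{k-1}(X)/Z_{k-1}(X)\longrightarrow 0 ,
\]
in which $C_{k-1}(X)/Z_{k-1}(X)\cong\im\partial_{k-1}$ is a subgroup of the free abelian group $C_{k-2}(X)$, hence free. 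A short exact sequence of abelian groups with free cokernel splits, so $H_{k-1}(T_{k,k-2})\cong H_{k-1}(T)\oplus\im\partial_{k-1}$; passing to torsion subgroups and using that $\im\partial_{k-1}$ is torsion-free yields $H_{k-1}(T_{k,k-2})_t\cong H_{k-1}(T)_t$, i.e.\ $\theta_{T_{k,k-2}}=\theta_T$. One also checks that $T_{k,k-2}$ is still a spanning tree of $X_{k,k-2}$ (its $k$-cells form the same column basis of $\partial_k$), so $\theta_{T_{k,k-2}}$ is defined.

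\emph{Main obstacle.} The tree half is routine once the splitting above is written down; the only care needed is the low-degree reduced-versus-unreduced bookkeeping and confirming $T_{k,k-2}$ really is a spanning tree. The delicate point is in the cotree half, and it is purely one of tracking which cells survive the collapse: $a_L$ is finite only because of the $(k+1)$-cells of $X$, so one must make sure the model used for $X_{k,k-2}$ retains $\partial_{k+1}$. I would sidestep this by carrying out the entire cotree argument at the level of the relative complex $C_\bullet(X,L)$, which is visibly unchanged under collapsing $X^{(k-2)}$, rather than through a literal skeleton quotient.
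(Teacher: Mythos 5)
The paper cites this statement from \cite[Lemma~4.2]{catanzaro2017} and does not reprove it, so there is no in-paper argument to measure your proposal against; I am assessing it on its own terms.

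Your tree half is correct: the short exact sequence with free cokernel $\im\partial_{k-1}$ splits, and passing to torsion subgroups kills the free summand.

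The cotree half has a genuine gap, which you flag in your ``main obstacle'' paragraph but do not actually resolve. You take $L$ to be $k$-dimensional, so that $a_L=|H_k(X,L)|=\bigl|\coker\bigl(\partial_{k+1}\colon C_{k+1}(X)\to C_k(X)/C_k(L)\bigr)\bigr|$, which depends essentially on $\partial_{k+1}$. But $X_{k,k-2}=X^{(k)}/X^{(k-2)}$ has no $(k+1)$-cells, so the literal $H_k(X_{k,k-2},L_{k,k-2})$ equals $C_k(X)/C_k(L)$, a free group that almost never has order $a_L$. Your displayed chain of isomorphisms tacitly identifies $X/X^{(k-2)}$ with $X^{(k)}/X^{(k-2)}$, which fails whenever $X$ has cells above dimension $k$, and ``carrying out the argument at the level of $C_\bullet(X,L)$'' does not repair this: it is the truncation at the $k$-skeleton, not the collapse of $X^{(k-2)}$, that discards $\partial_{k+1}$, and $C_\bullet(X,L)$ is emphatically \emph{not} unchanged by that truncation. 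The statement only makes sense with the cotree's dimension offset from the tree's; as the paper's own use in Lemma~\ref{lemma:tree-weights-smith} indicates, $L$ should be a $(k-1)$-dimensional cotree, so that $a_L=|H_{k-1}(X,L)|=\bigl|\coker\bigl(\overline{\partial_k}\colon C_k(X)\to C_{k-1}(X)/C_{k-1}(L)\bigr)\bigr|$, an expression built solely from $\partial_k$ and therefore manifestly preserved by passing to $X_{k,k-2}$. With that index correction the cotree half is in fact easier than the tree half, and your chain-level observation goes through immediately.
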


The following formula from the same paper will help us relate these weights to the restricted Laplacian.
\begin{proposition}[{\cite[Theorem C]{catanzaro2017}} Higher Matrix-Tree Theorem]
  \label{prop:higher_matrix_tree}
    Let $\widehat{\mathcal L}_k:B_k(X) \to B_k(X)$ be the Laplacian restricted to the $k$-boundaries. Then
    \[\det \widehat{\mathcal L}_k = \frac{1}{\theta_X^2}\left(\sum_L a_L^2\right)\left(\sum_T \theta_T^2\right) \, ,\]
    where the first sum ranges over all $k$-dimensional spanning cotrees of $X$, and the second ranges over $(k+1)$-dimensional spanning trees. 
\end{proposition}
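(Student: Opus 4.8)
\textbf{Proof proposal for Proposition~\ref{prop:higher_matrix_tree} (Higher Matrix-Tree Theorem).}

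The plan is to compute $\det\widehat{\mathcal L}_k$ via the Cauchy--Binet formula after rewriting the restricted Laplacian in terms of a single boundary matrix. Observe that on $B_k(X)$ we have $\widehat{\mathcal L}_k = \partial_k^*\partial_k|_{B_k}$, since the other summand $\partial_{k+1}\partial_{k+1}^*$ of $\mathcal L_k$ sends $B_k$ into $B_k$ but, more usefully, because $B_k \perp Z^k$ we can split $\mathcal L_k$ and observe only the $\partial_k^*\partial_k$ piece acts nontrivially in a way that sees $\partial_{k+1}$ through the identification $B_k = \operatorname{im}\partial_{k+1}$. Concretely I would choose a basis for $B_k(X)$ adapted to a fixed $(k+1)$-dimensional spanning tree $T_0$, express $\widehat{\mathcal L}_k$ as $M^* M$ for a suitable rectangular integer matrix $M$ built from $\partial_{k+1}$ and $\partial_k$ (so that columns of $M$ are indexed by $(k+1)$-cells and rows by $k$-cells, up to the change of basis identifying $\operatorname{im}\partial_{k+1}$ with $B_k$), and then apply Cauchy--Binet: $\det(M^*M) = \sum_{S,S'} \det(M_S)\det(M_{S'})\cdot(\text{overlap terms})$, which for $M^*M$ collapses to $\sum_S \det(M_S)^2$ over maximal square submatrices $M_S$.

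The key steps, in order: (1) identify $B_k(X;\mathbb{Q})$ with $\mathbb{Q}^{|X_k| - \operatorname{rank}\partial_k^{\perp}\cdots}$ — more precisely, with the span of a row basis / column basis so that $\partial_{k+1}$ and $\partial_k$ both descend to square-ish matrices on this space; (2) use Proposition~\ref{prop:simplifying-weights} to reduce to $X_{k,k-2}$ and $X_{k+1,k-1}$ where the combinatorics of spanning trees and cotrees is cleaner, so that the minors appearing in Cauchy--Binet are literally indexed by spanning cotrees $L$ (from the $\partial_{k+1}$ side) and spanning trees $T$ (from the $\partial_k$ side); (3) invoke the interpretation of minors of boundary matrices as orders of relative homology groups — this is the crux, identifying $|\det(\partial_{k+1})_S|$ with the cotree weight $a_L$ and $|\det(\partial_k)_{S'}|$ with the tree weight $\theta_T$, both up to a global factor of $\theta_X$ coming from the choice of complement bases for $Z_k$ and $B_k$; (4) assemble the Cauchy--Binet expansion, so that $\det\widehat{\mathcal L}_k = \frac{1}{\theta_X^2}\left(\sum_L a_L^2\right)\left(\sum_T \theta_T^2\right)$, with the $\theta_X^{-2}$ accounting for the normalization of the chosen basis of $B_k$ relative to the integral structure.

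The main obstacle I expect is step (3) together with correctly tracking the normalization constant $\theta_X^2$. Getting the minors of $\partial_{k+1}$ and $\partial_k$ to equal $a_L$ and $\theta_T$ on the nose — rather than up to sign or up to an extraneous torsion factor — requires care with the Smith normal form / universal coefficient bookkeeping: a maximal minor of an integer boundary matrix computes the order of a relative homology group only after one accounts for the torsion of the ambient cycle and boundary groups, which is precisely where $\theta_X = |H_{k}(X)_t|$ (or the appropriate torsion order) enters. A clean way to handle this is to first prove the identity after collapsing skeleta via Proposition~\ref{prop:simplifying-weights}, since on $X_{k,k-2}$ the chain groups in the relevant range are free and the torsion is concentrated predictably, and then transport the result back. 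I would also double-check the edge case where $X$ has no $(k+1)$-cells or no $k$-cells, where both sides degenerate to $1$ or $0$ appropriately. Since this proposition is quoted from \cite[Theorem C]{catanzaro2017}, in the paper itself one may simply cite it; the sketch above indicates how a self-contained argument would go.
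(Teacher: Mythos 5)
The paper does not actually prove this proposition; it is imported verbatim from \cite[Theorem~C]{catanzaro2017}, as you correctly note at the end. Your sketch of a self-contained argument, however, misidentifies the relevant matrices in a way that would derail the proof. You assert that $\widehat{\mathcal L}_k = \partial_k^*\partial_k|_{B_k}$, but this is exactly backwards: since $B_k = \im\partial_{k+1} \subseteq Z_k = \ker\partial_k$, the summand $\partial_k^*\partial_k$ annihilates $B_k$, and what survives is $\widehat{\mathcal L}_k = \partial_{k+1}\partial_{k+1}^*|_{B_k}$. Your justification ``because $B_k \perp Z^k$'' is a true statement but does not support the claim you draw from it. Consequently your step (3) is also misattributed: \emph{both} sums in the formula come from minors of the \emph{single} matrix $\partial_{k+1}$, not one from $\partial_{k+1}$ and one from $\partial_k$. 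The $(k+1)$-dimensional spanning trees $T$ index maximal independent sets of columns of $\partial_{k+1}$, yielding $\theta_T$, while the $k$-dimensional spanning cotrees $L$ index maximal independent sets of rows of $\partial_{k+1}$, yielding $a_L$ — this is exactly the content of Lemma~\ref{lemma:tree-weights-smith} (applied with $k+1$ in place of $k$ for the cotree half). With the matrix corrected, the remaining architecture of your sketch — a Cauchy--Binet expansion, identification of maximal minors with relative torsion orders via Smith normal form, and a $\theta_X^2$ normalization accounting for the integral structure of $B_k$ — is the right shape for the argument. Since the result is only cited, no proof is required in the paper, but the two errors above would need fixing if you intend to flesh the sketch out.
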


\subsection{Smith Normal Form}\label{subsec:smith-normal-form}
Given an $m\times n$ integer matrix $A$, its \emph{Smith normal form} is another $m\times n$ matrix $SAT$ where $S\in GL_m(\mathbb Z)$, $T \in GL_n(\mathbb Z)$, and $SAT$ is of the form
\[\begin{pmatrix}
    a_1 & 0 & 0 & \dots&  0 \\
    0 & a_2 & 0 & \dots&  0 \\
    0 & 0 & a_3 & \dots&  0 \\
    \vdots & \vdots & \vdots & \ddots \\
    0 & 0 & 0 && 0
\end{pmatrix}\]
with $a_i \, | \,a_{i+1}$ for all $i$. The Smith normal form of $A$ always exists and is unique up to the signs of the nonzero entries. The crucial properties we will be using are that the Smith normal forms of $A$ and $A^T$ are themselves transposes, and that $|\coker(A)| = |\coker(SAT)| = |a_1\dots a_k|$.

\begin{lemma}\label{lemma:tree-weights-smith}
    Let $T$ be a $k$-dimensional spanning tree of $X$, and let $t_1,\dots, t_n$ be the nonzero entries of the Smith normal form of the composition
    $C_k(T) \hookrightarrow C_k(X) \xto{\partial} C_{k-1}(X).$
    Then \[\theta_T = t_1\cdots t_n.\]
    Similarly, if $L$ is a $(k-1)$-dimensional spanning cotree of $X$, and $\ell_1,\dots, \ell_n$ are the nonzero entries of the Smith normal form of $C_k(X) \xto{\partial} C_{k-1}(X) \twoheadrightarrow C_{k-1}(X)/C_{k-1}(L)$, then
    \[a_L = \ell_1\cdots \ell_n.\]
\end{lemma}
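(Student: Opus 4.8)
The plan is to prove the statement about spanning trees; the dual statement about spanning cotrees follows by an entirely analogous argument (or by passing to transposes, using that the Smith normal forms of $A$ and $A^T$ are transposes of each other, and that $C_k(X) \xto{\partial} C_{k-1}(X) \twoheadrightarrow C_{k-1}(X)/C_{k-1}(L)$ is, up to identifications, the transpose of an inclusion composed with a boundary). So I will focus on showing $\theta_T = t_1 \cdots t_n$ where $t_1, \dots, t_n$ are the nonzero Smith invariants of the composite $\varphi \colon C_k(T) \hookrightarrow C_k(X) \xto{\partial_k} C_{k-1}(X)$.

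First I would recall that $\theta_T = |H_{k-1}(T)_t|$ is the order of the torsion subgroup of $H_{k-1}(T)$, and that since $T$ is a $k$-dimensional spanning tree, $T^{(k-1)} = X^{(k-1)}$, so $C_{k-1}(T) = C_{k-1}(X)$ and the differential $\partial_k^T \colon C_k(T) \to C_{k-1}(T)$ is exactly the composite $\varphi$ above. The key observation is that $H_{k-1}(T) = \ker(\partial_{k-1})/\im(\partial_k^T)$, and I want to extract its torsion. Here I would use the defining property of spanning trees: the $k$-cells of $T$ form a \emph{column basis} for $\partial_k$, equivalently $\partial_k^T = \varphi$ is injective and $\im(\partial_k^T) = \im(\partial_k) = B_{k-1}$. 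Injectivity means $\varphi$ has trivial kernel, so $n = \rank \varphi = |T_k|$ and all $n$ Smith invariants are genuinely the ``$a_i$'' of the normal form with no zero columns beyond them.

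The main computation is then to identify $H_{k-1}(T)_t$ with $\coker(\varphi)_t$, or more precisely to show $|H_{k-1}(T)_t| = |\coker(\varphi)_t| = t_1 \cdots t_n$ (the last equality being the standard fact that $|\coker|$ of an injective map with full-rank image equals the product of its nonzero Smith invariants, up to sign). The cleanest route: put $\partial_k^T = \varphi$ in Smith normal form $S \varphi U$ with $S \in GL(C_{k-1}(X))$, $U \in GL(C_k(T))$. Since $\varphi$ is injective, $\coker(\varphi) \cong \Z^{r-n} \oplus \bigoplus_i \Z/t_i$ where $r = \operatorname{rank} C_{k-1}(X)$, so $\coker(\varphi)_t \cong \bigoplus_i \Z/t_i$ has order $t_1\cdots t_n$. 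It then remains to connect $\coker(\varphi) = C_{k-1}(X)/B_{k-1}$ to $H_{k-1}(T) = Z_{k-1}/B_{k-1}$. These differ: $Z_{k-1} = \ker \partial_{k-1} \subseteq C_{k-1}(X)$ is a subgroup, and $C_{k-1}(X)/Z_{k-1} \cong B_{k-2} \cong \im \partial_{k-1}$ is \emph{free} (being a subgroup of the free group $C_{k-2}(X)$). So from the short exact sequence $0 \to Z_{k-1}/B_{k-1} \to C_{k-1}(X)/B_{k-1} \to C_{k-1}(X)/Z_{k-1} \to 0$ with free quotient, the torsion subgroup of the middle term lives entirely in the left term, giving $H_{k-1}(T)_t = (C_{k-1}(X)/B_{k-1})_t = \coker(\varphi)_t$, whose order is $t_1 \cdots t_n$. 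This finishes the tree case.

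The step I expect to be the main obstacle is pinning down that $\im(\partial_k^T) = \im(\partial_k)$ as subgroups of $C_{k-1}(X)$ (not just rationally) — the ``column basis'' characterization of spanning trees in the cited homological definitions gives a rational column basis, and one must check this forces integral surjectivity onto $B_{k-1}$, or else weaken the claim appropriately; in fact the right statement is that $\im(\partial_k^T)$ has finite index in $B_{k-1}$ with index $\theta_X$, and the factor $\theta_X$ is exactly why Proposition~\ref{prop:higher_matrix_tree} carries a $1/\theta_X^2$. I would need to be careful here: if $\theta_X \neq 1$ the naive identity $\im(\partial_k^T) = B_{k-1}$ fails, and the correct statement becomes $|H_{k-1}(T)_t| = t_1 \cdots t_n$ via the exact sequence $0 \to B_{k-1}/\im(\partial_k^T) \to H_{k-1}(T) \to H_{k-1}(X) \to 0$ combined with the cokernel computation — so the honest proof should route through $\coker(\varphi)$ directly as above rather than through $H_{k-1}(X)$, which sidesteps the $\theta_X$ subtlety entirely since $\coker(\varphi) = C_{k-1}(X)/\im(\partial_k^T)$ by definition and its torsion order is unambiguously $\prod t_i$. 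The remaining work is then purely the routine Smith-normal-form bookkeeping, which I would not spell out in detail.
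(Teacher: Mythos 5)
Your proof is correct, and it does reach the paper's conclusion, but via a genuinely different reduction step than the paper's. The paper invokes Proposition~\ref{prop:simplifying-weights} to replace $T$ by $T_{k,k-2} = T^{(k)}/T^{(k-2)}$, so that the relevant chain complex collapses to the two-term complex $0 \to C_k(T) \to C_{k-1}(T) \to 0$; then $H_{k-1}$ is \emph{literally} $\coker(\varphi)$ and the Smith-normal-form observation applies directly. You instead work with $T$ itself and identify $H_{k-1}(T)_t = (Z_{k-1}/\im\varphi)_t$ with $(C_{k-1}/\im\varphi)_t = (\coker\varphi)_t$ via the short exact sequence $0 \to Z_{k-1}/\im\varphi \to C_{k-1}/\im\varphi \to C_{k-1}/Z_{k-1} \to 0$ and the observation that the right-hand quotient is free (being isomorphic to $\im\partial_{k-1} \subseteq C_{k-2}$). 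Both reductions are valid and land in the same place. Your route is slightly more elementary in that it bypasses the skeletal-collapse proposition, at the cost of a short homological-algebra argument that the paper avoids; the paper's route is more uniform, since the same Proposition~\ref{prop:simplifying-weights} reduction is reused verbatim for the cotree case. Your choice to route directly through $\coker(\varphi)$ rather than through $H_{k-1}(X)$ is well judged, as it does indeed sidestep any dependence on whether $\im(\partial_k^T)$ equals $B_{k-1}$ integrally. One small caution: your parenthetical claim that the index $[B_{k-1} : \im(\partial_k^T)]$ equals $\theta_X$ is not quite the definition used in the cited matrix-tree literature and in general depends on $T$; since you don't actually use it, this doesn't affect correctness, but I would drop that aside to avoid asserting something unverified.
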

\begin{remark}
    The matrix for $C_k(T) \hookrightarrow C_k(X) \xto{\partial} C_{k-1}(X)$ is the boundary matrix for $X$ restricted to the columns corresponding to the $k$-cells of $T$, precisely the submatrix mentioned in our definition of a spanning tree. Similarly, the matrix for $C_k(X) \xto{\partial} C_{k-1}(X) \twoheadrightarrow C_{k-1}(X)/C_{k-1}(L)$ is the boundary matrix for $X$ after removing the rows corresponding to the $(k-1)$-cells of $L$, which is the submatrix mentioned by Remark~\ref{rem:row-basis}.
\end{remark}
\begin{proof}[Proof of 6.4]
    We handle the spanning tree first. We know that $\theta_T = \theta_{T_{k,k-2}} = |H_{k-1}(T)_t|$ by Proposition~\ref{prop:simplifying-weights}, which is the size of the torsion part of the $(k-1)$-homology of the chain complex
    \[\dots \to 0 \to C_k(T) \xto{\partial^T_k} C_{k-1}(T) \to 0 \to \dots \, , \]
    whose homology is $\coker(\partial^T_k)$. Note that $C_{k-1}(T) = C_{k-1}(X)$, and the map $\partial^T_k$ is the boundary map $\partial:C_k(X) \to C_{k-1}(X)$ restricted to $C_k(T)$.
    Therefore we can factor the map as the composition in the statement of the lemma.
    Our remark about the size of the cokernel in Subsection~\ref{subsec:smith-normal-form} finishes this part.

    The spanning cotree is handled similarly. Using the fact that $a_{L} = a_{L_{k,k-2}}$, it suffices to calculate the $k-1$ homology of the relative chain complex
    \[\dots \to 0 \to C_k(X)/C_k(L) \xto{\overline \partial_k} C_{k-1}(X)/C_{k-1}(L) \to 0 \to \dots,\]
    which is $\coker(\overline\partial_k)$. We already know this cokernel is finite, since its order is $a_L$, so its free part must be trivial.
    Note that $C_k(L) = 0$, so the domain of this map is $C_k(X)$.
    It then factors as the composition in the statement of the lemma.
    Since the entire cokernel is torsion, we can again use our remark about the size of the cokernel in Subsection~\ref{subsec:smith-normal-form} to finish the proof.
\end{proof}

For the statement of the following lemma, recall that if $L$ is a 1-dimensional spanning cotree of $M$, we can regard $L^\perp$ as a 1-dimensional spanning tree of $M^*$.
\begin{lemma}\label{lemma:tree-cotree-weights}
    If $L$ is a spanning cotree for $M$ of any dimension $0\leq k \leq 2$, then $a_L = \theta_{L^\perp}$.
\end{lemma}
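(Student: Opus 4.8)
The plan is to compare the Smith normal form descriptions of $a_L$ and $\theta_{L^\perp}$ provided by Lemma~\ref{lemma:tree-weights-smith}, using the duality between $M$ and $M^*$. By Lemma~\ref{lemma:tree-weights-smith}, $a_L$ is the product of the nonzero entries of the Smith normal form of the matrix obtained from $\partial^M_k : C_k(M)\to C_{k-1}(M)$ by deleting the rows corresponding to the $(k-1)$-cells of $L$. On the other side, $\theta_{L^\perp}$ is the product of the nonzero entries of the Smith normal form of the matrix obtained from $\partial^{M^*}_{3-k} : C_{3-k}(M^*)\to C_{2-k}(M^*)$ by restricting to the columns corresponding to the $(3-k)$-cells of $L^\perp$ (here I am using that $L^\perp$, viewed in $M^*$, is a $(3-k)$-dimensional spanning tree, the dimensions matching up under the bijection between $j$-cells of $M$ and $(2-j)$-cells of $M^*$).

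The key observation is that, under the canonical identifications $C_j(M)\cong C_{2-j}(M^*)$, the boundary map $\partial^{M^*}_{3-k}$ is the transpose of $\partial^M_{k}$ (this is exactly the second bullet point of the dual cell structure, applied with the appropriate index; for $k=1$ it is $\partial^{M^*}_2 = (\partial^M_1)^T$, for $k=2$ it is $\partial^{M^*}_1 = (\partial^M_2)^T$, and the $k=0$ case is degenerate). Deleting the rows of $\partial^M_k$ indexed by the $(k-1)$-cells of $L$ corresponds, after transposing, to deleting the columns of $(\partial^M_k)^T$ indexed by those same cells; but the $(k-1)$-cells of $L$ correspond precisely to the $(3-k)$-cells of $M^*$ that are \emph{not} in $L^\perp$, so what remains is exactly the column-restriction of $\partial^{M^*}_{3-k}$ to the $(3-k)$-cells of $L^\perp$. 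Thus the two matrices whose Smith normal forms compute $a_L$ and $\theta_{L^\perp}$ are transposes of one another.

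I would then invoke the fact recorded in Subsection~\ref{subsec:smith-normal-form} that the Smith normal forms of a matrix and its transpose are transposes of each other, so in particular they have the same list of nonzero diagonal entries. Hence the products of the nonzero entries agree, giving $a_L = \theta_{L^\perp}$. I expect the main obstacle to be purely bookkeeping: making sure the cell-to-cell bijections, the relabelling of rows as columns under transpose, and the identification of ``the complement of $L$'s cells'' with ``$L^\perp$'s cells'' all line up in the correct degrees for each of the admissible values $k=0,1,2$. Once the index gymnastics are pinned down, the statement is an immediate consequence of Lemma~\ref{lemma:tree-weights-smith} and the transpose-invariance of Smith normal form, so there is no substantive analytic or algebraic difficulty beyond that.
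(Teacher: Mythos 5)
Your proof is correct and uses the same strategy as the paper: express both $a_L$ and $\theta_{L^\perp}$ as products of Smith normal form entries via Lemma~\ref{lemma:tree-weights-smith}, identify the two relevant matrices as transposes of one another under the dual cell structure, and conclude from the transpose-invariance of Smith normal form. If anything, your index-tracking for general $k$ is spelled out more carefully than the paper's, whose proof is phrased with the specific indices of the $k=1$ case (e.g.\ $\partial^{M^*}_k = (\partial^M_{k+1})^T$ and $C_k(L^\perp)$, both of which hold only when $k=1$); one small slip in your write-up is the enumeration of cases at the end of the second paragraph, where the ``degenerate'' value under your shifted indexing should be $k=3$ (equivalently $\dim L = 2$) rather than $k=0$, but this does not affect the substance of the argument.
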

\begin{proof}
    The matrices for $C_{k+1}(M) \xto{\partial^M_{k+1}} C_k(M)
    \twoheadrightarrow C_k(M)/C_k(L)$ and $C_k(L^\perp) \hookrightarrow
    C_k(M^*) \xto{\partial^{M^*}_k} _{k-1}(X)$ are transposes of each
    other.  Indeed, the former is the matrix for $\partial_{k+1}^M$ with the
    rows corresponding to the 1-cells of $L$ removed, while the latter is the
    matrix for $\partial^{M^*}_k$ (the transpose of $\partial_{k+1}^M$) with
    the columns corresponding to the 1-cells of $L$ removed.  Since these
    matrices are transposes, we conclude that their Smith normal forms are
    transposes as well, and in particular have the same nonzero entries.
    Lemma~\ref{lemma:tree-weights-smith} finishes the proof.
\end{proof}

\begin{lemma}\label{lemma:cotrees-have-weight-one}
    For any 1-dimensional spanning cotree $L$ of $M$, $a_L = 1$.
\end{lemma}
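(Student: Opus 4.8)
The plan is to show that for a $1$-dimensional spanning cotree $L$ of $M$, the relative homology group $H_1(M,L)$ is trivial, which forces its weight $a_L = |H_1(M,L)|$ to equal $1$. The natural route is to pass through the dual cell structure using Lemma~\ref{lemma:tree-cotree-weights}, which says $a_L = \theta_{L^\perp}$ where $L^\perp$ is regarded as a $1$-dimensional spanning tree of the dual complex $M^*$. By definition $\theta_{L^\perp} = |H_0(L^\perp)_t|$, the order of the torsion subgroup of $H_0$ of the spanning tree. But $H_0$ of any space is always free abelian --- degree-zero homology never has torsion --- so $H_0(L^\perp)_t = 0$ and hence $\theta_{L^\perp} = 1$.

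Concretely, I would first invoke Lemma~\ref{lemma:tree-cotree-weights} in the case $k=1$ to reduce $a_L$ to $\theta_{L^\perp}$, where $L^\perp$ is a $1$-dimensional spanning tree of $M^*$. Then I would unwind the definition of the weight of a spanning tree: $\theta_{L^\perp} = |H_{0}(L^\perp)_t|$, the size of the torsion part of the zeroth homology. Finally I would observe that $H_0$ of any chain complex of free modules (in particular the cellular chain complex of $L^\perp$) is a free abelian group, since $C_0$ is free and $H_0 = C_0/\operatorname{im}\partial_1$ is a quotient of a free group by a subgroup, but more to the point $H_0$ of a CW complex is always $\Z^{(\#\text{components})}$ with no torsion. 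Therefore the torsion subgroup is trivial and $a_L = \theta_{L^\perp} = 1$.

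Alternatively, and perhaps more directly, one can argue that $H_1(M,L)$ itself is trivial rather than merely finite. Since $X^{(0)} \subseteq L \subseteq X^{(1)} = M^{(1)}$, the relative cellular chain complex $C_\bullet(M,L)$ has $C_0(M,L) = 0$ and $C_1(M,L)$ free on the $1$-cells of $M$ not in $L$; by Remark~\ref{rem:row-basis} these cells form a row basis of $\partial_2^M$, equivalently their duals form a column basis of $\partial^{M^*}_1$. The long exact sequence of the pair, together with the definition of spanning cotree (which makes $i_{L*}$ an isomorphism on $H_1$ and $H_0$ rationally), shows $H_1(M,L)$ is finite; but $H_1(M,L) = \ker(\overline\partial_2^M : C_2(M,L) \to C_1(M,L))$ is a subgroup of a free abelian group, hence free, hence trivial once it is finite. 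Either phrasing works; the first is cleanest given the machinery already developed.

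I do not expect any serious obstacle here --- the result is essentially the observation that $H_0$ of a space has no torsion, transported across the tree-cotree duality. The only thing to be careful about is bookkeeping the dimension shift: a $1$-dimensional spanning cotree $L$ of $M$ corresponds under duality to a $1$-dimensional spanning tree $L^\perp$ of $M^*$, whose weight involves $H_0$ (i.e. $H_{k-1}$ with $k=1$), and it is precisely this $H_0$ that kills the torsion. One should also confirm that Lemma~\ref{lemma:tree-cotree-weights} applies in the stated range $0 \le k \le 2$ with $k=1$, which it does.
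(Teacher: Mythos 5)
Your proof is correct and takes the same route as the paper: apply Lemma~\ref{lemma:tree-cotree-weights} to convert $a_L$ into the weight $\theta_{L^\perp}$ of a $1$-dimensional spanning tree of $M^*$, and then observe that $H_0$ is always torsion-free, so $\theta_{L^\perp}=1$. Your alternative direct argument about $H_1(M,L)$ is fine too, but it is unnecessary given the machinery already in place.
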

\begin{proof}
    By Lemma~\ref{lemma:tree-cotree-weights}, it suffices to prove that every 1-dimensional spanning tree $T$ of $M^*$ has weight 1.
    This follows since zero-degree homology is always free.
\end{proof}

\begin{theorem}\label{thm:orientable-surface-D-formula}
    Let $M$ be a CW structure on an orientable surface. Then $\Upsilon(M,1)$ is the number of 1-dimensional spanning cotrees of $M$, which equals
    \[\Upsilon(M,1) = \det \widehat{\mathcal L}_1/|M_2| \]
    where $\widehat{\mathcal L}_1:B_1(X) \to B_1(X)$ is the restricted Laplacian, and $|M_2|$ is the number of 2-cells of $M$.
\end{theorem}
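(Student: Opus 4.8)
The plan is to combine Lemma~\ref{lemma:cotrees-have-weight-one}, which trivializes all cotree weights, with the Higher Matrix-Tree Theorem (Proposition~\ref{prop:higher_matrix_tree}) applied at $k=1$. Write $N$ for the number of $1$-dimensional spanning cotrees of $M$. First I would dispatch the equality $\Upsilon(M,1)=N$: by Lemma~\ref{lemma:cotrees-have-weight-one} every $1$-dimensional spanning cotree $L$ of $M$ has $a_L=1$, so in the definition $\Upsilon(M,1)=\left(\sum_L a_L^2\right)\prod_L a_L$ we get $\sum_L a_L^2 = N$ and $\prod_L a_L = 1$ (spanning cotrees exist, so this is a genuine nonempty product), whence $\Upsilon(M,1)=N$.

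Next I would invoke Proposition~\ref{prop:higher_matrix_tree} with $k=1$:
\[\det\widehat{\mathcal L}_1 = \frac{1}{\theta_M^2}\left(\sum_L a_L^2\right)\left(\sum_T \theta_T^2\right),\]
the first sum over $1$-dimensional spanning cotrees, the second over $2$-dimensional spanning trees. Here $\theta_M = 1$, since the relevant homology group of the connected orientable surface $M$ (namely $H_0$ for $k=1$) is torsion-free. By the previous paragraph $\sum_L a_L^2 = N$, so it remains to show $\sum_T \theta_T^2 = |M_2|$.

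The heart of the argument — and the step I expect to be the main obstacle — is identifying the $2$-dimensional spanning trees of $M$ and computing their weights, as this is where closedness and orientability enter essentially. Since $M$ is a closed connected orientable surface, $\ker\partial_2 = H_2(M;\Z)\cong\Z$ is generated by the fundamental class $[M]=\sum_{\sigma\in M_2}\varepsilon_\sigma\sigma$ with every $\varepsilon_\sigma=\pm1$. Hence $\partial_2$ has rank $|M_2|-1$, and for any fixed $2$-cell $\sigma_0$ the remaining columns are linearly independent: a dependence among $\{\sigma:\sigma\neq\sigma_0\}$ would lie in $\ker\partial_2=\Z[M]$, impossible because $[M]$ has nonzero $\sigma_0$-coefficient. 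Therefore the $2$-dimensional spanning trees of $M$ are exactly the subcomplexes obtained by deleting a single $2$-cell, so there are $|M_2|$ of them. For the weight, each such $T$ has underlying space $M$ with one open disk removed, hence is homotopy equivalent to a wedge of circles, so $H_1(T)$ is free and $\theta_T = |H_1(T)_t| = 1$. (Alternatively, via Lemma~\ref{lemma:tree-weights-smith}, $\theta_T$ is the product of the nonzero Smith invariants of $\partial_2$ with one column deleted; deleting that column does not change the image, so the relevant cokernel is $C_1(M)/B_1(M)$, an extension of the free groups $B_0(M)$ and $H_1(M)$ — freeness of $H_1$ of an orientable surface being standard — hence torsion-free.) Summing gives $\sum_T\theta_T^2 = |M_2|$.

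Combining the pieces, $\det\widehat{\mathcal L}_1 = N\cdot|M_2|$, i.e. $N = \det\widehat{\mathcal L}_1/|M_2|$, and together with $\Upsilon(M,1)=N$ this yields the claimed formula. Everything outside the spanning-tree count and weight computation is bookkeeping with the two cited propositions.
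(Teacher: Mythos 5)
Your proof is correct, and the opening steps match the paper's: using Lemma~\ref{lemma:cotrees-have-weight-one} to see that every $1$-dimensional spanning cotree has weight $1$ (hence $\Upsilon(M,1)=N$), then invoking Proposition~\ref{prop:higher_matrix_tree} at $k=1$ with $\theta_M=1$. Where you genuinely diverge is in establishing $\sum_T\theta_T^2=|M_2|$. The paper applies the tree--cotree duality of Lemma~\ref{lemma:tree-cotree-weights} once more, rewriting each $\theta_T$ as $a_{T^\perp}$ for a $0$-dimensional spanning cotree $T^\perp$ of the dual complex $M^*$; since a $0$-dimensional spanning cotree of a connected complex is a single vertex, there are $|(M^*)_0|=|M_2|$ of them, each of weight $1$. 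You instead work directly on $M$: the observation that $\ker\partial_2=\Z[M]$ with the fundamental class having all coefficients $\pm 1$ shows that removing any single $2$-cell yields a column basis, giving exactly $|M_2|$ two-dimensional spanning trees, and each weight $\theta_T=|H_1(T)_t|$ is $1$ because a closed orientable surface with an open disk removed is homotopy equivalent to a wedge of circles (or, as you note, because the relevant cokernel is an extension of free abelian groups via Lemma~\ref{lemma:tree-weights-smith}). Your route is more self-contained and makes the role of orientability visible through the fundamental class; the paper's is shorter given the dual-cell machinery already set up in Section~\ref{sec:orientable-surfaces} and uses orientability only through the existence of $M^*$. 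One minor imprecision: you attribute $\theta_M=1$ to the torsion-freeness of $H_0$, while the paper cites $H_2$; whichever degree the convention of Proposition~\ref{prop:higher_matrix_tree} actually refers to, every integral homology group of a connected orientable surface is free, so this does not affect the argument.
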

\begin{proof}
    The fact that $\Upsilon(M,1)$ equals the number of spanning cotrees of $M$ follows immediately from Lemma~\ref{lemma:cotrees-have-weight-one} and the definition of $\Upsilon(M,1)$:
    \begin{align*}
        \Upsilon(M,1) = \left( \sum_L |H_1(X,L)|^2 \right)\prod_L |H_1(X,L)| = \left( \sum_L a_L^2 \right)\prod_L a_L = \sum_L 1.
    \end{align*}
    Rearranging the equation of Proposition~\ref{prop:higher_matrix_tree} gives the expression
    \[\Upsilon(M,1) = \frac{\theta^2_X\det \widehat{\mathcal{L}}_1}{\sum_T \theta_T^2}\]
    Since $X$ is an orientable surface, its second homology has no torsion and $\theta_X = 1$. Additionally, we can rewrite each $\theta_T$ as $a_{T^\perp}$, the weight of a zero-dimensional spanning cotree in $M^*$. We can derive from the definition of a spanning cotree that any spanning cotree of a connected complex consists of a single vertex. Hence $\theta_T = a_{L^\perp} = 1$. Plugging these into the equation above gives
    \[\Upsilon(M,1) = \frac{\det\widehat{\mathcal{L}}_1}{|(M^*)_0|} = \frac{\det\widehat{\mathcal{L}}_1}{|M_2|}. \qedhere\]

\end{proof}

\begin{corollary}\label{cor:orientable-surface-tidy-existence}
  Let $M$ be a CW structure on an orientable surface and $p$ a prime not dividing $\det\widehat{\mathcal L}_1/|M_2|$. Then $M$ is homologically harmonic in degree $1$ over $\F p$.
\end{corollary}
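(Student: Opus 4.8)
The plan is to deduce this directly from Theorem~\ref{thm:fixed-X-variable-p} in the case $k=1$. By Theorem~\ref{thm:orientable-surface-D-formula} (equivalently, by the ``furthermore'' clause of Theorem~\ref{thm:fixed-X-variable-p} itself), we have $\Upsilon(M,1) = \det\widehat{\mathcal L}_1/|M_2|$, so the standing hypothesis $p \nmid \det\widehat{\mathcal L}_1/|M_2|$ is literally the hypothesis $p \nmid \Upsilon(M,1)$ required there. Here I am tacitly assuming, as elsewhere in this section, that $M$ is a finite CW complex, so that $\Upsilon$ and $\det\widehat{\mathcal L}_1$ are defined.

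The only other hypothesis of Theorem~\ref{thm:fixed-X-variable-p} is that $H_1(M)$ has no $p$-torsion, and I would check that this holds vacuously: the first integral homology of a connected $2$-manifold is torsion-free --- for a degree-wise finite CW structure it is free abelian of finite rank --- so it has no $p$-torsion for \emph{any} prime $p$. (For a closed orientable surface this is the familiar computation $H_1 \cong \Z^{2g}$; a surface with boundary deformation retracts onto a graph, so $H_1$ is again free. Alternatively, combine Poincar\'e--Lefschetz duality with the universal coefficient theorem.) With both hypotheses of Theorem~\ref{thm:fixed-X-variable-p} in hand, I would apply it in degree $1$ to conclude that $M$ is homologically harmonic in degree $1$ over $\F p$, i.e. every class of $H_1(M;\F p)$ has a unique harmonic representative.

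There is essentially no obstacle here beyond the bookkeeping already carried out in earlier sections; the one thing worth flagging is that the surface hypothesis is precisely what lets us drop the $p$-torsion caveat of Theorem~\ref{thm:fixed-X-variable-p}, so that the clean arithmetic condition $p \nmid \det\widehat{\mathcal L}_1/|M_2|$ alone suffices.
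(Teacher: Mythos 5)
Your proposal is correct and follows the route the paper intends: the ``furthermore'' clause of Theorem~\ref{thm:fixed-X-variable-p} (equivalently Theorem~\ref{thm:orientable-surface-D-formula}) identifies $\Upsilon(M,1)$ with $\det\widehat{\mathcal L}_1/|M_2|$, and the $p$-torsion hypothesis of Theorem~\ref{thm:fixed-X-variable-p} is automatically satisfied since $H_1$ of a (connected, closed) orientable surface is free abelian. The paper gives no separate proof of the corollary precisely because this two-line deduction is the intended one; your extra observation about surfaces with boundary is harmless but unnecessary, since the dual cell structure used throughout Section~\ref{sec:orientable-surfaces} already forces $M$ to be closed.
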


\section{Experiments}\label{sec:experiments}
In this section we present visualizations of harmonic representatives over various fields.
Much of the underlying SageMath code from this section is available as a Python package called simpleHarmony.
The source code, installation instructions, and examples can be found on github~\cite{simpleharm}.
Our examples come from two common applications of simplicial complexes: triangulations of surfaces and point cloud datasets.
There are several methods of constructing a simplicial complex on the latter, including the \u Cech complex and alpha complex.
We elect to use the Vietoris-Rips complex, which is the clique complex on a proximity graph of the data points.
See~\cite{ghrist2008} for a friendly introduction to \u Cech and Vietoris-Rips complexes, and their applications to persistent homology.

With a simplicial complex $X$ in hand, we manually choose an integral cycle $z \in
Z(X;\mathbb{Z})$ which appears to represent a topological feature of interest.
We rationalize this cycle by mapping it to $Z(X;\mathbb{Q})$, where we then compute and plot its
harmonic representative using the pseudoinverse from
Theorem~\ref{thm:pseudoinverse-usefulness}.  For comparison, we also 
map the integral cycle to $Z(X;\F{p})$ for some prime $p$ and calculate its
harmonic representative using the same method.  As highlighted by
Lemma~\ref{lemma:pseudoinverse-existence-condition}, the pseudoinverse 
need not exist over $\F{p}$ for all primes $p$, so we 
search for the smallest prime $p$ such that the necessary pseudoinverse exists.
After finding such a prime, we proceed to compute and plot the harmonic
representative.


\subsection{Minimal Triangulation of the Torus}

\begin{figure}
    \begin{tikzpicture}
        \draw[] (0,0) -- (0,1) -- (0,2) -- (0,3);
        \draw[] (0,3) -- (1,3) -- (2,3) -- (3,3);
        \draw[] (3,3) -- (3,2)  -- (3,1)  -- (3,0);
        \draw[] (3,0) -- (2,0)-- (1,0)-- (0,0);
        \draw[] (0,2) -- (1,3) -- (0,1) -- (2,2) -- (1,1) -- (3,2) -- (2,0) -- (3,1);
        \draw[] (1,1)--(2,2);
        \draw[] (0,1) -- (1,1) -- (0,0);
        \draw[] (1,0) -- (1,1) -- (2,0);
        \draw[] (3,3) -- (2,2) -- (3,2);
        \draw[] (2,3) -- (2,2) -- (1,3);
        \node at (0,0) [circle,draw=black,fill=black,inner sep=0.5mm] {};
        \node at (1,0) [circle,draw=black,fill=black,inner sep=0.5mm] {};
        \node at (2,0) [circle,draw=black,fill=black,inner sep=0.5mm] {};
        \node at (3,0) [circle,draw=black,fill=black,inner sep=0.5mm] {};
        \node at (0,1) [circle,draw=black,fill=black,inner sep=0.5mm] {};
        \node at (1,1) [circle,draw=black,fill=black,inner sep=0.5mm] {};
        \node at (3,1) [circle,draw=black,fill=black,inner sep=0.5mm] {};
        \node at (0,2) [circle,draw=black,fill=black,inner sep=0.5mm] {};
        \node at (2,2) [circle,draw=black,fill=black,inner sep=0.5mm] {};
        \node at (3,2) [circle,draw=black,fill=black,inner sep=0.5mm] {};
        \node at (0,3) [circle,draw=black,fill=black,inner sep=0.5mm] {};
        \node at (1,3) [circle,draw=black,fill=black,inner sep=0.5mm] {};
        \node at (2,3) [circle,draw=black,fill=black,inner sep=0.5mm] {};
        \node at (3,3) [circle,draw=black,fill=black,inner sep=0.5mm] {};
    \end{tikzpicture}

    \begin{tikzpicture}
        \draw[color = blue, very thick] (0,0) -- (0,1) -- (0,2) -- (0,3);
        \draw[color = blue, very thick] (3,3) -- (3,2)  -- (3,1)  -- (3,0);
        \draw[ultra thin, dashed] (0,3) -- (1,3) -- (2,3) -- (3,3);
        \draw[ultra thin, dashed] (3,0) -- (2,0)-- (1,0)-- (0,0);
        \draw[ultra thin, dashed] (0,2) -- (1,3) -- (0,1) -- (2,2) -- (1,1) -- (3,2) -- (2,0) -- (3,1);
        \draw[ultra thin, dashed] (1,1)--(2,2);
        \draw[ultra thin, dashed] (0,1) -- (1,1) -- (0,0);
        \draw[ultra thin, dashed] (1,0) -- (1,1) -- (2,0);
        \draw[ultra thin, dashed] (3,3) -- (2,2) -- (3,2);
        \draw[ultra thin, dashed] (2,3) -- (2,2) -- (1,3);
        \node at (0,0) [circle,draw=black,fill=black,inner sep=0.5mm] {};
        \node at (1,0) [circle,draw=black,fill=black,inner sep=0.5mm] {};
        \node at (2,0) [circle,draw=black,fill=black,inner sep=0.5mm] {};
        \node at (3,0) [circle,draw=black,fill=black,inner sep=0.5mm] {};
        \node at (0,1) [circle,draw=black,fill=black,inner sep=0.5mm] {};
        \node at (1,1) [circle,draw=black,fill=black,inner sep=0.5mm] {};
        \node at (3,1) [circle,draw=black,fill=black,inner sep=0.5mm] {};
        \node at (0,2) [circle,draw=black,fill=black,inner sep=0.5mm] {};
        \node at (2,2) [circle,draw=black,fill=black,inner sep=0.5mm] {};
        \node at (3,2) [circle,draw=black,fill=black,inner sep=0.5mm] {};
        \node at (0,3) [circle,draw=black,fill=black,inner sep=0.5mm] {};
        \node at (1,3) [circle,draw=black,fill=black,inner sep=0.5mm] {};
        \node at (2,3) [circle,draw=black,fill=black,inner sep=0.5mm] {};
        \node at (3,3) [circle,draw=black,fill=black,inner sep=0.5mm] {};
    \end{tikzpicture}
    \qquad
    \begin{tikzpicture}
        \draw[color = blue, thick] (0,3) -- (0,2) -- (1,3) -- (0,1) -- (2,2) -- (1,1) -- (3,2) -- (2,0) -- (3,1) -- (3,0);
        \draw[color = blue, thick] (1,3) -- (2,3) -- (2,2) -- (3,3) -- (3,2);
        \draw[color = blue, thick] (0,1) -- (0,0) -- (1,1) -- (1,0) -- (2,0);
        \draw[ultra thin, dashed] (0,3) -- (1,3) -- (2,2) -- (3,2) -- (3,1);
        \draw[ultra thin, dashed] (0,2) -- (0,1) -- (1,1) -- (2,0) -- (3,0);
        \draw[ultra thin, dashed] (2,3) -- (3,3);
        \draw[ultra thin, dashed] (0,0) -- (1,0);
        \node at (0,0) [circle,draw=black,fill=black,inner sep=0.5mm] {};
        \node at (1,0) [circle,draw=black,fill=black,inner sep=0.5mm] {};
        \node at (2,0) [circle,draw=black,fill=black,inner sep=0.5mm] {};
        \node at (3,0) [circle,draw=black,fill=black,inner sep=0.5mm] {};
        \node at (0,1) [circle,draw=black,fill=black,inner sep=0.5mm] {};
        \node at (1,1) [circle,draw=black,fill=black,inner sep=0.5mm] {};
        \node at (3,1) [circle,draw=black,fill=black,inner sep=0.5mm] {};
        \node at (0,2) [circle,draw=black,fill=black,inner sep=0.5mm] {};
        \node at (2,2) [circle,draw=black,fill=black,inner sep=0.5mm] {};
        \node at (3,2) [circle,draw=black,fill=black,inner sep=0.5mm] {};
        \node at (0,3) [circle,draw=black,fill=black,inner sep=0.5mm] {};
        \node at (1,3) [circle,draw=black,fill=black,inner sep=0.5mm] {};
        \node at (2,3) [circle,draw=black,fill=black,inner sep=0.5mm] {};
        \node at (3,3) [circle,draw=black,fill=black,inner sep=0.5mm] {};
    \end{tikzpicture}
    \caption{ \label{fig:minimalTriangulation}
      A harmonic representative on the torus.
    (Top) A minimal triangulation $X$ of the torus, with 7 vertices, 21 edges, and
    14 faces. The vertices and edges along the top edge of the square are
    identified with those on the bottom, and similarly with the left and right
    edges. (Lower left) The cycle $x \in Z_1(X;\F2)$. 
    The edges in its support are emphasized in blue, with all other
    edges drawn dashed. (Lower right) The unique harmonic representative of the
    homology class $[x]$, drawn in the same fashion.}
  \end{figure}
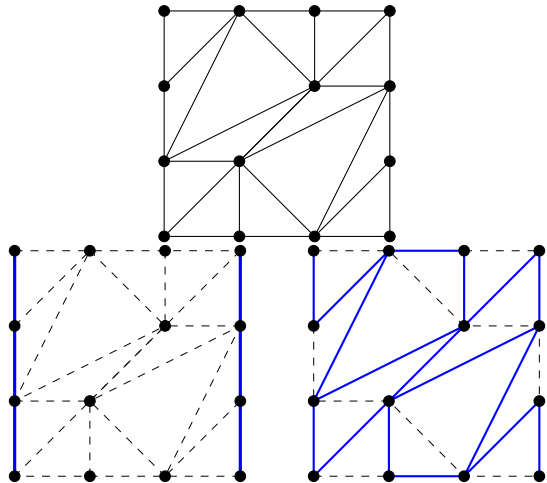

Consider the minimal triangulation of the torus depicted in
Figure~\ref{fig:minimalTriangulation} (top). We examine the first degree homology and
harmonic representatives of this simplicial complex $X$ over the field $\F2$.
Using code written in sage, we verify that $X$ is homologically harmonic over
$\F2$ in degree 1. This allows us to pick a particular cycle $x \in Z_1(X;\F2)$ (Figure~\ref{fig:minimalTriangulation}, bottom left)
and compute the unique harmonic representative of $[x]$ (Figure~\ref{fig:minimalTriangulation}, bottom right). We perform
this computation by calculating the pseudoinverse from
Theorem~\ref{thm:TFAE-list} statement (\ref{it:A-PseudoinverseExistence}) and applying it to $[x]$.

Alternatively, we can use the methods of Section~\ref{sec:rational-methods} to
find the same representative.
Indeed, we use sage to compute the determinant of the restricted Laplacian and, using Theorem~\ref{thm:fixed-X-variable-p}, arrive at $\Upsilon(X,1) =  3\cdot 7^5 = 50421$.
Since $\Upsilon$ is odd and $p=2$, Theorem~\ref{thm:fixed-X-variable-p} also ensures that $X$ is homologically harmonic in degree $k$ over $\F{p}$.
Furthermore, Theorem~\ref{thm:projection-entries} ensures that the entries of the rational matrix $\pi^\dagger \pi$ fall within $\mathbb Z[\Upsilon^{-1}]$. Applying $R_p(\pi^\dagger \pi)$ to $x$ then gives the same $\F2$-harmonic representative as before, showing that the two
methods do indeed agree as expected.

\subsection{Point Cloud Sampled from a Lemniscate}

\begin{figure}
    \includegraphics[width=6.4cm]{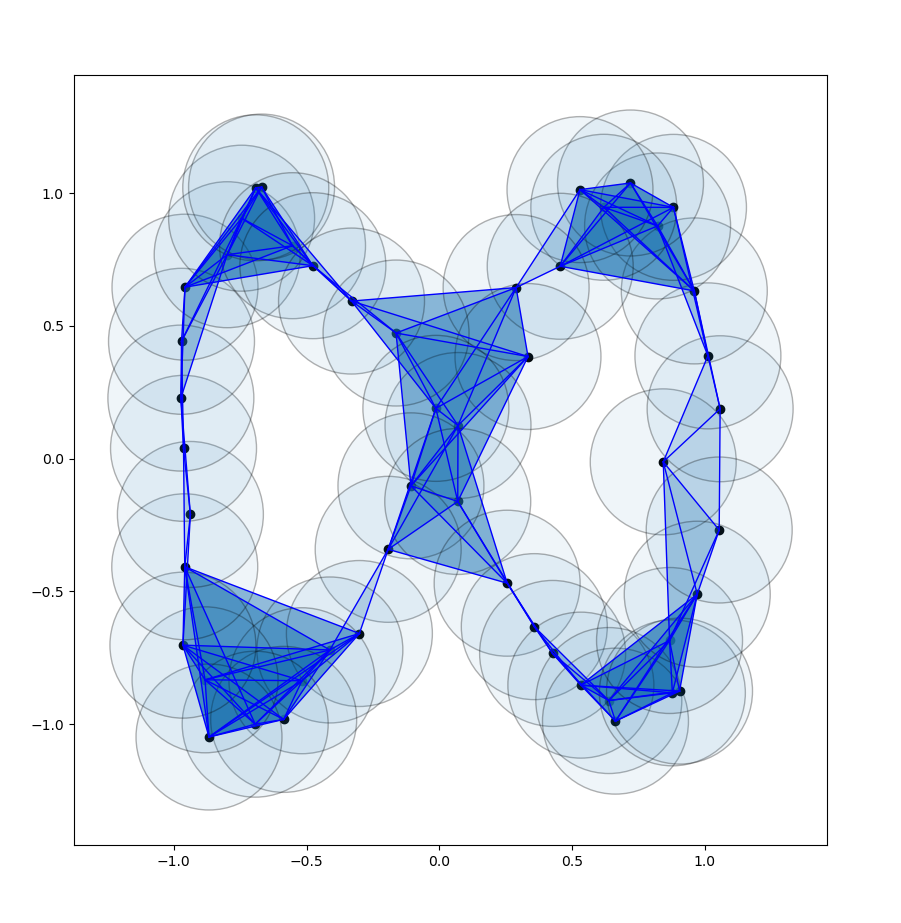}
    \includegraphics[width=6.4cm]{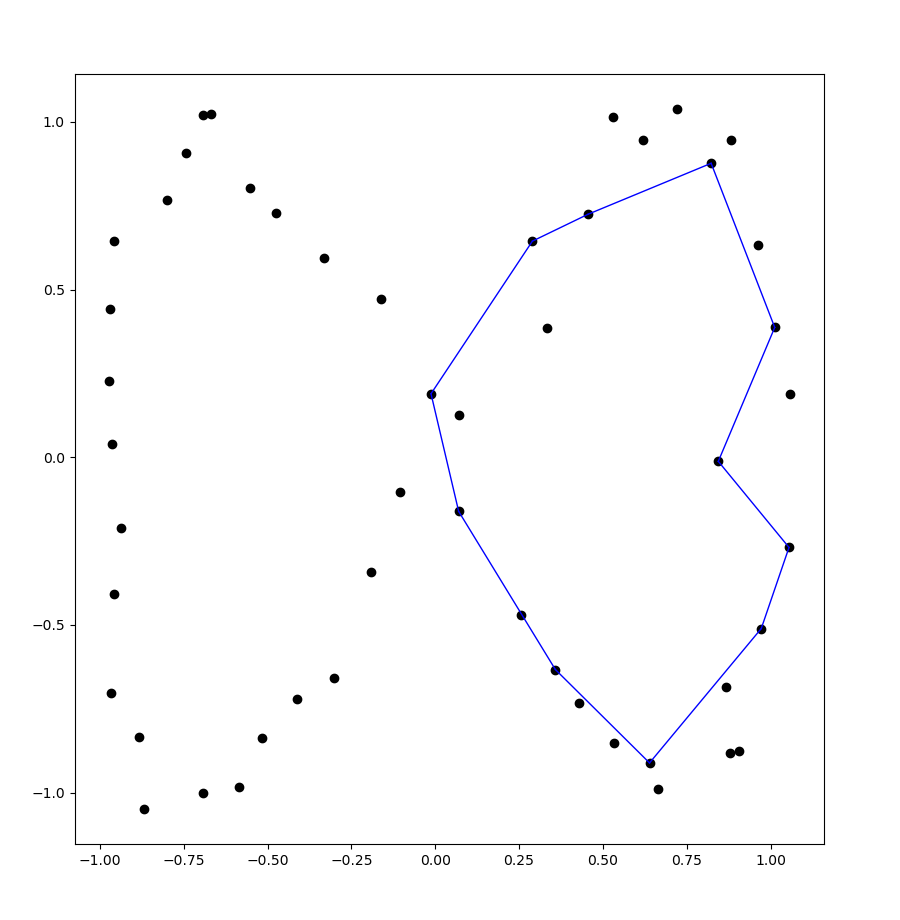}
    \includegraphics[width=6.4cm]{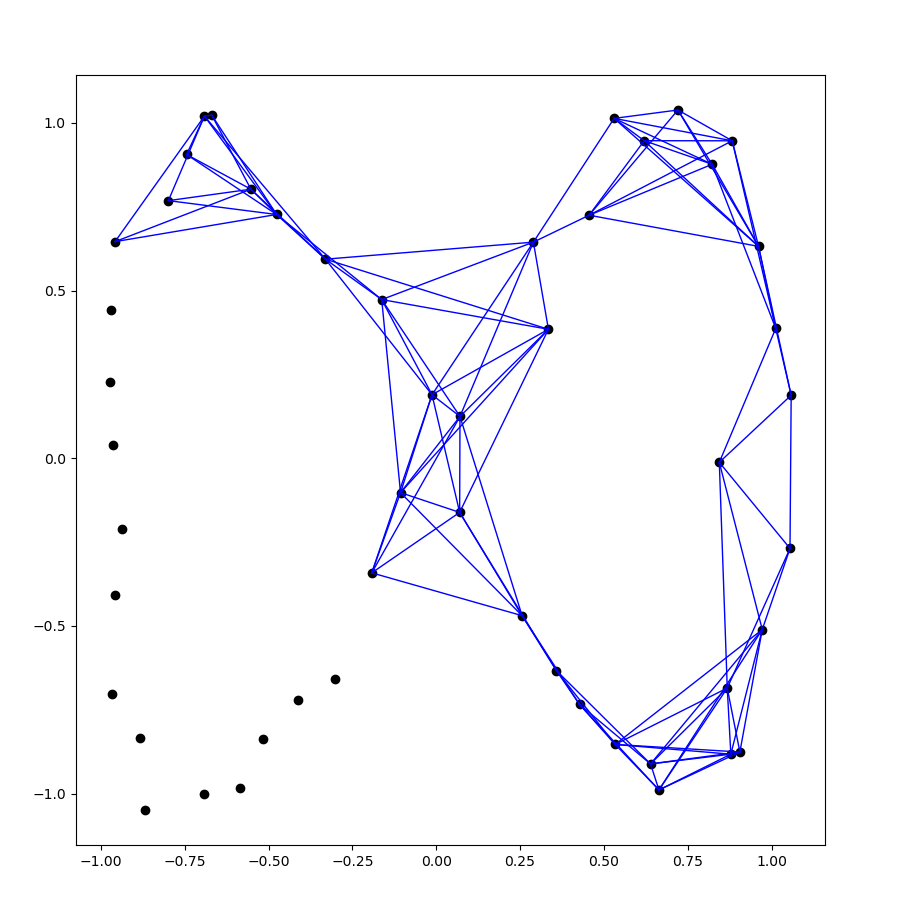}
    \includegraphics[width=6.4cm]{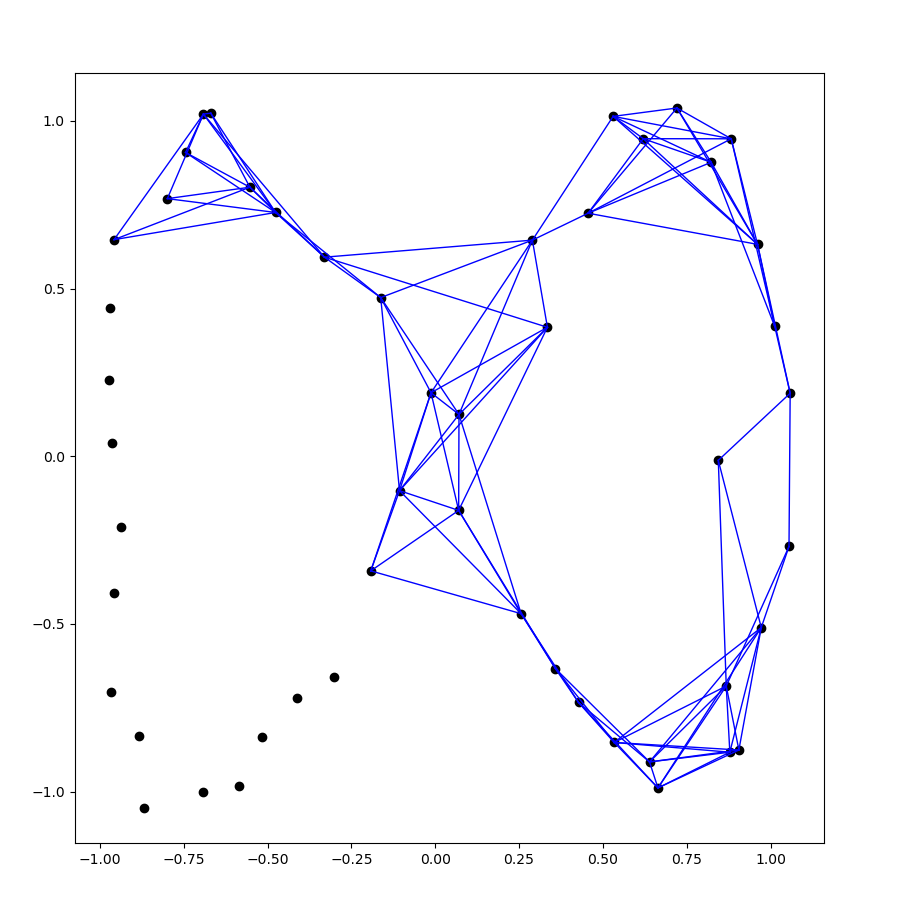}
    \includegraphics[width=6.4cm]{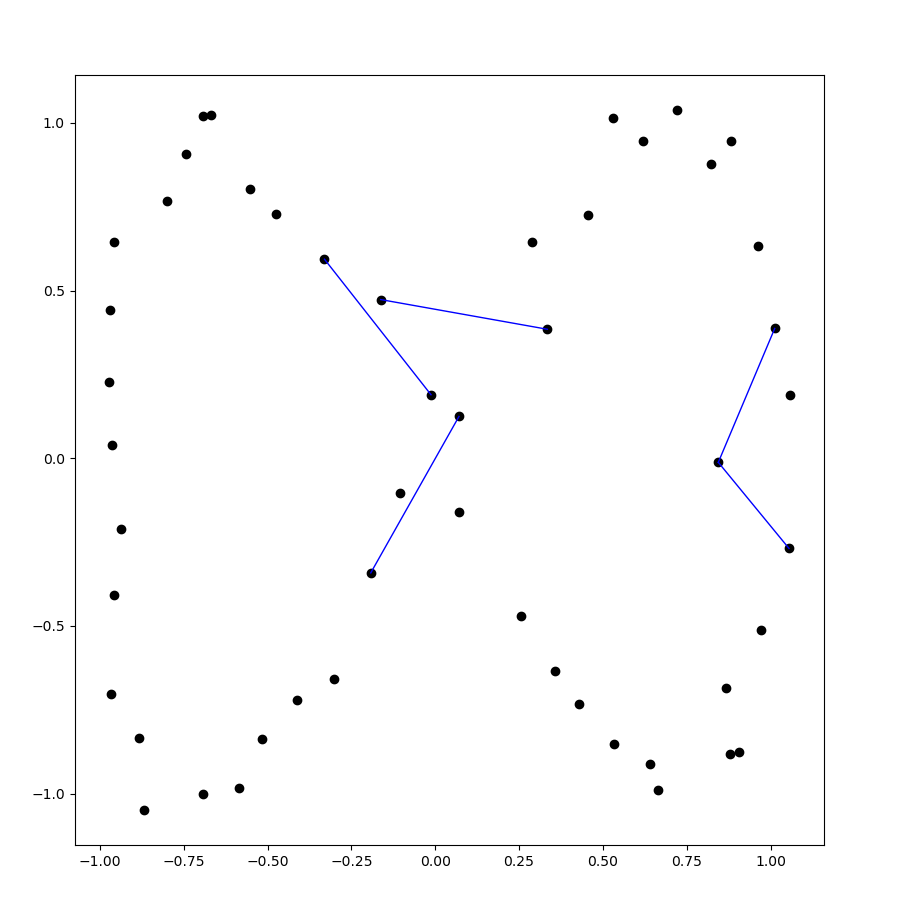}
    \caption{\label{fig:figure-eight} A Vietoris-Rips complex on a point cloud dataset sampled from a lemniscate with noise (upper left), a chosen integral cycle in the Vietoris-Rips complex (upper right), the corresponding harmonic forms over $\mathbb{Q}$ and $\F5$ (middle left and right), and the symmetric difference of those two supports.}
\end{figure}

For this experiment, we randomly sample 50 points from a lemniscate (a self-intersecting smooth curve homeomorphic to the wedge product $S^1\vee S^1$) embedded within $\mathbb{R}^2$ with Gaussian noise.
We construct the Vietoris-Rips complex $X$ on this
dataset with radius $0.45$ (Figure~\ref{fig:figure-eight}, top left).
We pick an integral cycle of interest $z\in H_1(X;\mathbb Z)$, by selecting
a cycle whose support wraps around the right hand
side (Figure~\ref{fig:figure-eight}, top right). We then consider $z$ as a cycle over both $\mathbb{Q}$ and
$\F5$, computing its harmonic representative in each case (Figure~\ref{fig:figure-eight}, middle left and right). In the latter case,
we achieve this with the pseudoinverse from
Theorem~\ref{thm:TFAE-list} item (\ref{it:A-PseudoinverseExistence}).
To highlight the similarity between these harmonic representatives, we also plot the symmetric difference of their supports (Figure~\ref{fig:figure-eight}, bottom).

In this case, the harmonic representatives over both $\mathbb{Q}$ and $\F5$
seem to visualize the initial geometric feature, with the support of each lying
most heavily on the right hand circle. They both localize the topological
phenomenon we are trying to isolate. This highlights one potential application
of harmonic representatives over finite fields to topological data analysis,
allowing practitioners another way to visualize persistent homology classes.

\subsection{Point Cloud Sampled from $S^1 \vee S^2 \vee S^1$}
\begin{figure}
    \includegraphics[width=7.5cm]{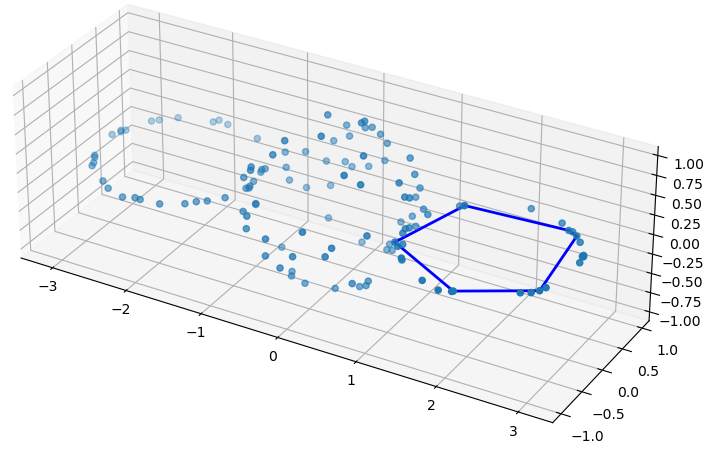}
    \includegraphics[width=7.5cm]{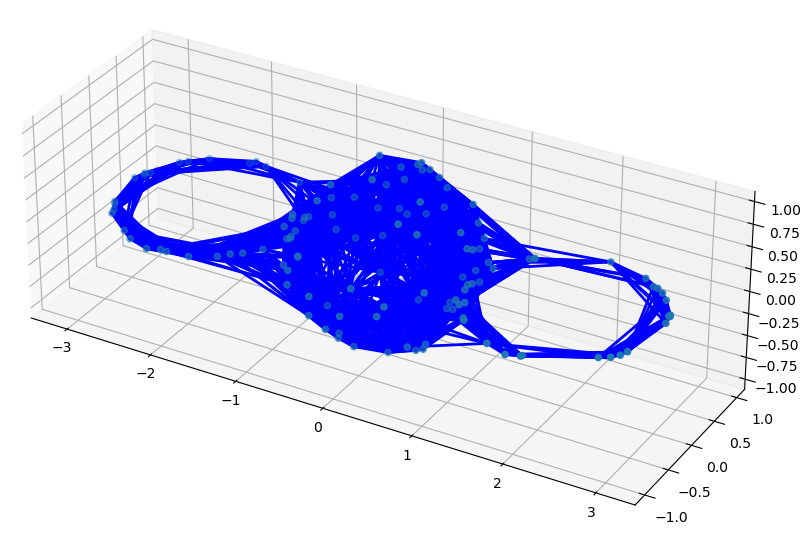}
    \includegraphics[width=7.5cm]{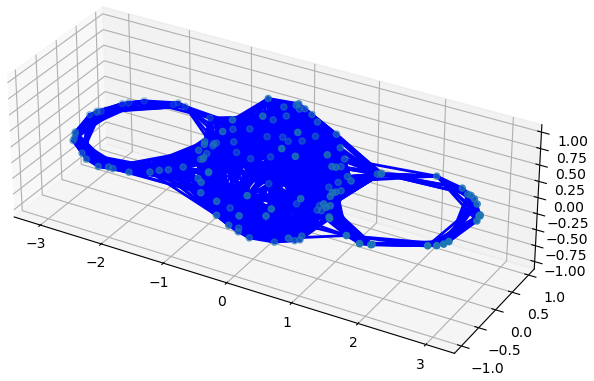}
    \includegraphics[width=7.5cm]{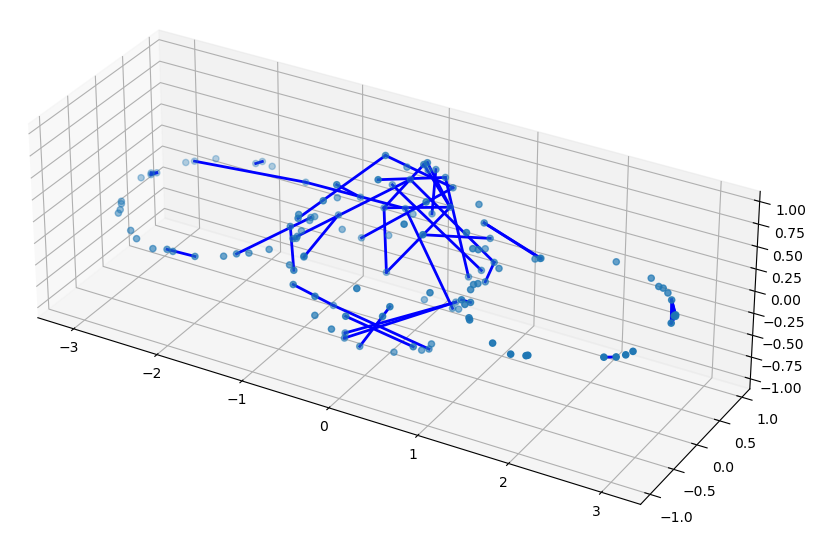}
    \caption{\label{fig:wedge-sphere} An integral cycle in the Vietoris-Rips complex on a point cloud sampled from $S^1 \vee S^2 \vee S^1$ with noise (upper left), the corresponding harmonic representatives over $\mathbb{Q}$ and $\F{47}$ (upper right and lower left), and the symmetric difference of those two supports.}
\end{figure}

We next run a similar experiment on a larger dataset in $\mathbb{R}^3$, inspired by \cite[Section 3.1]{Ebli2019}, wherein we sample 130 points from $S^1 \vee S^2 \vee S^1$, a sphere with two circles attached at antipodal points.
Of those 130 points, we take 70 from the sphere and 30 from each circle, with Gaussian noise of standard deviation $0.02$.
We construct the Vietoris-Rips complex $X$ on this dataset with radius $1.3$ and choose an integral cycle $z \in H_1(X;\mathbb{Z})$ whose support encompasses the circle attached to the right hand side of the sphere (Figure~\ref{fig:wedge-sphere}, top left).
As in the previous experiment, we plot the harmonic representatives of $z$ when considered as a cycle over $\mathbb{Q}$ and a finite field (Figure~\ref{fig:wedge-sphere}, top right, bottom left).
This time, the smallest field over which our calculation succeeded was $\F{47}$, showing that $X$ is not homologically harmonic over any smaller primes.

Notice that the support of the harmonic representative over the finite field is much larger in this case than the previous experiment.
Indeed, of the 1948 1-simplices in the Vietoris-Rips complex, 1893 have a nonzero coefficient in this cycle, which is not far from the 1907 that we would expect from a randomly chosen 1-chain.
Note, however, that the support of the rational harmonic form is similarly large, as shown by the relatively sparse symmetric difference of the supports (Figure~\ref{fig:wedge-sphere}, bottom right).
These results also differ from the previous experiment in that we require a much larger finite field before the statements of Theorem~A hold.

\bibliographystyle{plain}
\bibliography{HarmonicReps.bib}

\end{document}